\newtheorem{thm}{Theorem}[section]
\newtheorem{cor}[thm]{Corollary}
\newtheorem{lem}[thm]{Lemma}
\newtheorem{defn}[thm]{Definition}
\newtheorem{rem}[thm]{Remark}
\numberwithin{equation}{section}
\def\R{\mathbb R}
\def\b{b}
\def\varkappa{m}
\newcommand{\mand}{\quad\text{and}\quad}
\title[Some examples of equivalence]{Some examples of equivalent rearrangement-invariant quasi-norms defined via $f^*$ or $f^{**}$}
\author{Leo R. Ya. Doktorski}
\address{Fraunhofer Institute of Optronics, System Technologies and Image Exploitation IOSB, Department Object Recognition, Gutleuthausstr. 1, 76275 Ettlingen, Germany.}
\email{doktorskileo@gmail.com or leo.doktorski@iosb.fraunhofer.de}
\author{Pedro Fern\'andez-Mart\'inez} 
\address{Departamento de Matem\'aticas,
Facultad de Matem\'aticas, Universidad de Murcia, Campus de
Espinardo, 30071 Espinardo (Murcia), Spain}
\email{pedrofdz@um.es}
\author{Teresa Signes} 
\address{Departamento de Matem\'aticas,
Facultad de Matem\'aticas, Universidad de Murcia, Campus de
Espinardo, 30071 Espinardo (Murcia), Spain}
\email{tmsignes@um.es}
\date{\today}
\keywords{Real interpolation, $K$-functional, slowly varying functions, rearrangement invariant spaces.}
\begin{document}

\begin{abstract}
We consider Lorentz-Karamata spaces, small and grand Lorentz-Karamata spaces, and the so-called $\mathcal{L}$, $\mathcal{R}$,
$\mathcal{LL}$, $\mathcal{RL}$, $\mathcal{RL}$, and $\mathcal{RR}$ spaces. The quasi-norms for a function $f$ in each of these spaces can be
defined via the non-increasing rearrangement $f^*$ or via the maximal function $f^{**}$. We
investigate when these quasi-norms are
equivalent. Most of the proofs
are based on Hardy-type inequalities. As application we demonstrate how our general results can be used to
establish interpolation formulae for the grand and small Lorentz-Karamata spaces.
\end{abstract}
\maketitle

\section{Introduction}\label{introduction}

Let $(\Omega,\mu)$ be a totally $\sigma$-finite measure space with a non-atomic measure $\mu$ and let  $\mathcal{M}(\Omega,\mu)$ be the set of all $\mu$-measurable functions on $\Omega$. Associated to each function $f\in\mathcal{M}(\Omega,\mu)$ we consider $f^*$, the non-increasing rearrangement of $f$, and $f^{**}(t):=\frac{1}{t}\int_0^tf^*(u)du$, $t>0$, the maximal function of $f^*$
(see e.g. \cite{BS_1988}). One of the possibilities to define rearrangement-invariant quasi-Banach
function spaces on $\Omega$ is to use expressions for the quasi-norms in terms of $f^*$ or $f^{**}$. For
example, if $0<p,q\leq\infty$ and $b$ is a slowly varying function on $(0,\infty)$ (see Definition \ref{def1}  below),
the Lorentz-Karamata spaces $L_{p,q;b}$ and $L_{(p,q;b)}$ are defined as the sets of all $f\in\mathcal{M}(\Omega,\mu)$ such
that
\begin{equation}\label{ec1f}
\|f\|_{p,q;b}:=\big\|t^{\frac{1}{p}-\frac{1}{q}}b(t)f^{*}(t)\big\|_{q,(0,\infty)}<\infty
\end{equation}
or
\begin{equation}\label{ec2f}
\|f\|_{(p,q;b)}:=\big\|t^{\frac{1}{p}-\frac{1}{q}}b(t)f^{**}(t)\big\|_{q,(0,\infty)}<\infty,
\end{equation}
respectively. A natural question to ask is how this spaces are related. In \cite{OP_1999,NE_2002,GOT_2005} and \cite[Theorem 3.15]{P_2021} it was proved, using weighted Hardy inequalities, that 
\begin{equation}\label{ec1}
L_{p,q;b}=L_{(p,q;b)}\quad \mbox{if  }\quad 1<p\leq \infty\ \ \mbox{and}\ \ 0<q\leq \infty.\end{equation}
Previous equality means that the spaces are equal as linear spaces and their quasi-norms are equivalent. 


An important feature of identity \eqref{ec1} is that it can be seen from the point of view of interpolation theory. 
Recall that J. Peetre's well-known formula for the $K$-functional of a function $f$ in $L_1+L_\infty$ asserts that 
\begin{equation}\label{eK}
K(t,f;L_1,L_\infty)=\int_0^t f^*(\tau)\,d\tau=tf^{**}(t)
\end{equation}
(see \cite[Theorem V.1.6]{BS_1988}) and the interpolation spaces $(L_1,L_\infty)_{\theta,q;b}$ are defined as the set of all $f\in L_1+L_\infty$ such that 
$$ \|f\|_{\theta,q;b} := \big \| t^{-\theta-\frac{1}{q}} b(t) K(t,f;L_1,L_\infty) \big \|_{q,(0,\infty)} < \infty,$$
where $0\leq \theta\leq 1$, $0<q\leq \infty$, $b$ is a slowly varying function. 
Then 
$$(L_1,L_\infty)_{1-\frac{1}{p},q;b}=L_{(p,q;b)}.$$ Due to \eqref{ec1}, we have that
$$(L_1,L_\infty)_{1-\frac{1}{p},q;b}=L_{(p,q;b)}=L_{p,q;b},\qquad \mbox{if}\ 1<p<\infty,\ 0<q\leq \infty.$$
The aim of this paper is to prove similar identities for the limiting and extremal cons\-tructions $\mathcal{R}$, $\mathcal{L}$, 
$\mathcal{LL}$, $\mathcal{RL}$, $\mathcal{RL}$ and $\mathcal{RR}$ (see precise definitions in \S \ref{interpolation methods}). Specifically, these limiting  and extremal methods applied to the couple $(L_1,L_\infty)$ give spaces with quasi-norms expressed in terms of $f^{**}$. These expressions written with $f^*$ instead of $f^{**}$ produce quasi-norms that define, a priori, different quasi-Banach function spaces. Our goal  is to prove that the spaces defined via $f^{*}$ or $f^{**}$ are equal and the quasi-norms are equivalent. See \S \ref{section6} for the precise statements. In particular our results can be applied to grand and small Lebesgue spaces \cite{FK_2004}, as well as grand and small Lorentz-Karamata spaces, see Theorem \ref{teo63} and \ref{teo64}.

We illustrate the utility of our results by including an application to the interpolation of the couple $\big(L^{p_0),q_0,r_0}_{\b_0},L_\infty\big)$ (see Corollary \ref{cor72}). Other examples can be seen in \cite{D_2020B,FS_2021A,FS_2021B,FS_2021C,D_2021A,DFS_2022,DFS_2022B}.

The  organization of the paper is the following: Section 2 contains the basic tools: the properties of slowly varying functions and new Hardy-type inequalities. In section 3, we describe the interpolation methods we shall work with, namely $(A_0,A_1)_{\theta,q;\b}$, the ${\mathcal R}$ and ${\mathcal L}$ limiting interpolation spaces and the $\mathcal{LL}$, $\mathcal{RL}$, $\mathcal{RL}$ and $\mathcal{RR}$ extremal interpolation spaces. We also define the function spaces under consideration through quasi-norms expressed in terms of $f^*$ or $f^{**}$. 
In Section 4 we prove the main lemmas. Section 5 is devoted to the interpolation formulae for the couples $(L_m,L_\infty)$ and $(L_{m,\infty},L_\infty)$ and in Section 6 we deduce the particular case $m=1$.
Finally, Section 7 contains the applications.

\section{Preliminaries}\label{preliminaries}
Throughout the paper, we write $X\subset Y$ for two (quasi-) normed spaces $X$ and $Y$ to indicate that $X$ is continuously embedded in $Y$. We write $X=Y$ if $X\subset Y$ and $Y\subset X$. In this case, $X$ and $Y$ are equal as sets and as linear spaces and their quasi-norms are equivalent, and we say that the spaces $X$ and   $Y$ are identical or equal.

 For $f$ and $g$ being
positive functions, we write $f\lesssim g$ if $f\leq C g$, where the constant $C$ is independent of all
significant quantities. Two positive functions $f$ and $g$ are considered equivalent $f\approx g$ if $f\lesssim g$ and $g\lesssim f$. We adopt the conventions $1/\infty=0$ and $1/0=\infty$. The abbreviation LHS(*) and RHS(*)
will be used for the left and right hand side of the relation (*), respectively. By $\chi_{(a,b)}$ we denote the
characteristic function on an interval $(a,b)$. We write $f\!\uparrow$ ($f\!\downarrow$) if the positive function $f$ is non-decreasing (non-increasing).
Moreover, $\|*\|_{q,(a,b)}$ is the usual (quasi-) norm in the Lebesgue space $L_q$ on the interval $(a,b)$ ($0<q\leq\infty$, $0\leq a<b\leq\infty$).

\subsection{Slowly varying functions}
In this subsection we summarize some of the pro\-per\-ties of slowly varying functions which will be required later. For more details, we refer to e.g. \cite{FS_2012,FS_2015,GOT_2005,P_2021}.

\begin{defn}\label{def1}
A positive Lebesgue measurable function $\b$, $0\not\equiv\b\not\equiv\infty$,
is said to be \textit{slowly varying} on $(0,\infty)$ (notation $\b\in SV$) if, for each $\varepsilon>0$, the function $t \leadsto t^\varepsilon\b(t)$  is  equivalent to an increasing function while $ t \leadsto t^{-\varepsilon}\b(t)$ is equivalent to a decreasing function. 
\end{defn}

Examples of $SV$-functions include powers of logarithms,
$$\ell^\alpha(t)=(1+|\log t|)^\alpha(t),\quad t>0,\quad \alpha\in\R,$$ ``broken" logarithmic functions of \cite{EO_2000}, 
reiterated logarithms $(\ell\circ\ldots\circ\ell(t))^\alpha,\; t>0$, $\alpha\in\R$, and also the family of functions  $\exp(|\log t|^\alpha),\; t >0$ for  $\alpha \in (0,1)$.

\begin{lem}\label{lemma2}
Let $\b, \b_1, \b_2\in SV$, $\lambda\in\R$, $\alpha>0$, $0<q\leq \infty$ and $t\in(0,\infty)$. 
\begin{itemize}
\item[(i)]  Then $\b^\lambda\in SV$, $\b(1/t)\in SV$, $\b(t^\alpha \b_1(t))\in SV$, and $\b_1\b_2\in SV$.
\item[(ii)] If $f\approx g$, then $\b\circ f\approx \b\circ g$.
\item[(iii)] 
$\big\|u^{\alpha-\frac{1}{q}}\b(u)\big\|_{q,(0,t)}\approx t^\alpha\b(t)$ and
$\big\|u^{-\alpha-\frac{1}{q}}\b(u)\big\|_{q,(t,\infty)}\approx t^{-\alpha}\b(t).$
\item[(iv)] The functions $\big\|u^{-\frac{1}{q}}\b(u)\big\|_{q,(0,t)}$ and 
$\big\|u^{-\frac{1}{q}}\b(u)\big\|_{q,(t,\infty)}$ (if exist) belong to $SV$. Moreover,
$$\b(t) \lesssim \big\|u^{-\frac{1}{q}} \b(u)\big\|_{q,(0,t)} \mand
\b(t) \lesssim \big\|u^{-\frac{1}{q}} \b(u)\big\|_{q,(t,\infty)}.$$
\item[(v)] $\big\|u^{\lambda-\frac{1}{q}}\b(u)\big\|_{q,(t/2,t)}\approx\big\|u^{\lambda-\frac{1}{q}}\b(u)\big\|_{q,(t,2t)}\approx t^\lambda\b(t).$
\end{itemize}
\end{lem}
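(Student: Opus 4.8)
The plan is to distill from Definition~\ref{def1} a single quantitative estimate and then use it repeatedly. First I would record that for every $\b\in SV$ and every $\varepsilon>0$ there is a constant $C_\varepsilon$ with
$$
C_\varepsilon^{-1}\,\min\{(u/t)^\varepsilon,(t/u)^\varepsilon\}\,\b(t)\ \le\ \b(u)\ \le\ C_\varepsilon\,\max\{(u/t)^\varepsilon,(t/u)^\varepsilon\}\,\b(t),\qquad u,t\in(0,\infty);
$$
indeed, for $u\le t$ the upper bound is $u^\varepsilon\b(u)\lesssim t^\varepsilon\b(t)$ (because $u\mapsto u^\varepsilon\b(u)$ is equivalent to an increasing function) and the lower bound is $u^{-\varepsilon}\b(u)\gtrsim t^{-\varepsilon}\b(t)$ (because $u\mapsto u^{-\varepsilon}\b(u)$ is equivalent to a decreasing function), while for $u\ge t$ one exchanges the two monotone directions. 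Taking $\varepsilon=1$, this yields the corollary that $\b(u)\approx\b(v)$ whenever $u/v$ lies in a fixed compact subinterval of $(0,\infty)$, with constants depending only on that subinterval; this corollary is exactly item~(ii), and applied with $u\in(t/2,2t)$ it is the workhorse for items~(iv) and~(v) (and, through~(v), for the lower bounds in~(iii)).

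For item~(i), the closure properties $\b^\lambda\in SV$, $\b_1\b_2\in SV$ and $\b(1/t)\in SV$ follow by exhibiting $\varepsilon$-monotone representatives directly: for instance $t^{\varepsilon}\b^\lambda(t)=(t^{\varepsilon/\lambda}\b(t))^{\lambda}$ when $\lambda>0$ (with the analogous identity when $\lambda<0$), $t^{\varepsilon}\b_1(t)\b_2(t)=(t^{\varepsilon/2}\b_1(t))(t^{\varepsilon/2}\b_2(t))$, and $t^{\varepsilon}\b(1/t)$ becomes $s^{-\varepsilon}\b(s)$ under $s=1/t$. The genuinely nontrivial clause is $\b(t^\alpha\b_1(t))\in SV$. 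Here I would first apply the displayed estimate to $\b_1$ to see that $\varphi(t):=t^\alpha\b_1(t)$ is trapped between $t^{\alpha-\delta}$ and $t^{\alpha+\delta}$ (in the appropriate order near $0$ and near $\infty$) for every $\delta\in(0,\alpha)$, and is equivalent to an increasing function; then, to show $t^\varepsilon\b(\varphi(t))$ is equivalent to increasing, I would bound $\b(\varphi(s))$ for $s\le t$ via the displayed estimate for $\b$ applied to the pair $\varphi(s)\le\varphi(t)$, re-express the ratio $\varphi(t)/\varphi(s)$ as a power of $t/s$ using the estimate for $\b_1$, and finally choose the auxiliary exponents small enough that the surviving power of $t/s$ has exponent below $\varepsilon$. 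This absorption of powers of $\varphi$ into powers of $t$ is precisely where the two-sided polynomial control of $\varphi$ is used, and it is the main obstacle in the lemma; the decreasing part of the condition is symmetric.

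Item~(v) is then immediate: on $(t/2,t)$ and on $(t,2t)$ the corollary gives $\b(u)\approx\b(t)$ uniformly, so the quasi-norm equals $\b(t)\,\|u^{\lambda-1/q}\|_{q,(t/2,t)}\approx\b(t)\,t^\lambda$ (and likewise on $(t,2t)$) by the elementary power computation, with the obvious supremum replacement when $q=\infty$. Item~(iii) splits in two. For the lower bounds I would discard $(0,t/2)$, respectively $(2t,\infty)$, and invoke~(v). For the upper bound $\|u^{\alpha-1/q}\b(u)\|_{q,(0,t)}\lesssim t^\alpha\b(t)$ I would fix $\varepsilon\in(0,\alpha)$, insert $\b(u)\lesssim(t/u)^\varepsilon\b(t)$ for $u\le t$ from the displayed estimate, and integrate; $\int_0^t u^{(\alpha-\varepsilon)q-1}\,du$ converges precisely because $\varepsilon<\alpha$ and equals a constant times $t^{(\alpha-\varepsilon)q}$, which recombines to $t^{\alpha q}\b(t)^q$. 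The tail estimate is symmetric, using $\b(u)\lesssim(u/t)^\varepsilon\b(t)$ for $u\ge t$ and convergence of $\int_t^\infty u^{-(\alpha-\varepsilon)q-1}\,du$.

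For item~(iv), set $B(t):=\|u^{-1/q}\b(u)\|_{q,(0,t)}$. The ``Moreover'' bound $\b(t)\lesssim B(t)$ is again the half-interval trick: $B(t)^q\ge\int_{t/2}^t u^{-1}\b(u)^q\,du\gtrsim\b(t)^q$. Since $B$ is non-decreasing, $t^\varepsilon B(t)$ is already equivalent to an increasing function, so the only point to verify is that $t^{-\varepsilon}B(t)$ is equivalent to a non-increasing function for each $\varepsilon>0$, i.e. that $B(t')\lesssim(t'/t)^\varepsilon B(t)$ for $t'\ge t$; this follows from $B(t')^q-B(t)^q=\int_t^{t'}u^{-1}\b(u)^q\,du\lesssim t^{-\varepsilon q}\b(t)^q\int_t^{t'}u^{\varepsilon q-1}\,du\lesssim\b(t)^q(t'/t)^{\varepsilon q}\lesssim B(t)^q(t'/t)^{\varepsilon q}$, the last step using $\b\lesssim B$. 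The tail function $\|u^{-1/q}\b(u)\|_{q,(t,\infty)}$ is handled identically with the two monotone directions exchanged: it is automatically non-increasing, so this time it is the increasing-type condition that requires the estimate. Finally, all $q=\infty$ instances throughout reduce to easier suprema and are checked separately.
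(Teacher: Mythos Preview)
Your argument is correct. The paper does not actually prove Lemma~\ref{lemma2}; it is stated as a summary of known properties of slowly varying functions with references to \cite{FS_2012,FS_2015,GOT_2005,P_2021}, so there is no in-paper proof to compare against. Your approach --- extracting the two-sided ratio bound $\b(u)\lesssim\max\{(u/t)^\varepsilon,(t/u)^\varepsilon\}\b(t)$ directly from Definition~\ref{def1}, then using it uniformly for~(ii), the upper bounds in~(iii), the composition clause in~(i), and the non-trivial monotonicity direction in~(iv), while handling all lower bounds and~(v) via the half-interval trick --- is the standard self-contained route and matches what one finds in the cited sources.
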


Along the paper, we will often use these properties without explicit reference every time. The following simple lemma extends the above inequalities.

\begin{lem}\label{lemma3}
Let $\lambda\in\R$, $0<q\leq \infty$ and $\b\in SV$. Then for all non-negative non-increasing Lebesgue measurable functions $f$ on $(0,\infty)$ and for all $t>0$,
$$t^{\lambda}\b(t)f(t)\lesssim\big\|u^{\lambda-\frac{1}{q}}\b(u)f(u)\big\|_{q,(0,t)},$$ 
$$t^{\lambda}\b(t)f(t)\lesssim\big\|u^{\lambda-\frac{1}{q}}\b(u)f(u)\big\|_{q,(\frac{t}{2},\infty)}$$
and 
$$ t^{\lambda}\b(t)f(2t)\lesssim\big\|u^{\lambda-\frac{1}{q}}\b(u)f(u)\big\|_{q,(t,2t)}.$$
\end{lem}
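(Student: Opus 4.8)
The plan is to reduce all three inequalities to the corresponding statements in Lemma \ref{lemma2}(iii) and (v) by exploiting the monotonicity of $f$. For the first inequality, I would fix $t>0$ and use that $f$ is non-increasing, so that $f(u)\geq f(t)$ for every $u\in(0,t)$. Pulling this pointwise bound inside the $L^q$-norm gives
$$\big\|u^{\lambda-\frac{1}{q}}\b(u)f(u)\big\|_{q,(0,t)}\geq f(t)\,\big\|u^{\lambda-\frac{1}{q}}\b(u)\big\|_{q,(0,t)}\approx f(t)\,t^{\lambda}\b(t),$$
where the last step is exactly the first equivalence in Lemma \ref{lemma2}(iii). (Here one must note that the left-hand norm could be $+\infty$, in which case there is nothing to prove; if it is finite, the above chain is valid.) Note that this argument only needs the integral over $(0,t)$ to converge near $0$, which is automatic when the norm on the right is finite.

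For the second inequality, the monotonicity of $f$ does not help directly on $(t/2,\infty)$, since there $f(u)\leq f(t)$. Instead I would localize: restrict the norm to the subinterval $(t/2,t)\subset(t/2,\infty)$, on which $f(u)\geq f(t)$ because $f$ decreases. This yields
$$\big\|u^{\lambda-\frac{1}{q}}\b(u)f(u)\big\|_{q,(t/2,\infty)}\geq\big\|u^{\lambda-\frac{1}{q}}\b(u)f(u)\big\|_{q,(t/2,t)}\geq f(t)\,\big\|u^{\lambda-\frac{1}{q}}\b(u)\big\|_{q,(t/2,t)}\approx f(t)\,t^{\lambda}\b(t),$$
invoking Lemma \ref{lemma2}(v) at the end. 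The third inequality is handled the same way: on $(t,2t)$ monotonicity gives $f(u)\geq f(2t)$, so
$$\big\|u^{\lambda-\frac{1}{q}}\b(u)f(u)\big\|_{q,(t,2t)}\geq f(2t)\,\big\|u^{\lambda-\frac{1}{q}}\b(u)\big\|_{q,(t,2t)}\approx f(2t)\,t^{\lambda}\b(t),$$
again by Lemma \ref{lemma2}(v).

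The only mild subtlety — and the place where a careless argument could go wrong — is the case $q=\infty$, where "$\|\cdot\|_{q,(a,b)}$" is an essential supremum rather than an integral; one should check that the factorization $\|g\,f\|_{\infty,(a,b)}\geq(\inf_{(a,b)}f)\,\|g\|_{\infty,(a,b)}$ still holds when $f$ is bounded below by a constant on the interval, which it does. Apart from that, there is no real obstacle: everything is a one-line consequence of pulling a constant lower bound for $f$ out of the quasi-norm and quoting the already-established equivalences for $\b\in SV$. I would also remark that the constants in $\lesssim$ depend only on $\lambda$, $q$ and $\b$, not on $t$ or $f$, which is what makes the statement useful in the sequel.
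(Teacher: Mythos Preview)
Your argument for the second and third inequalities is correct and is precisely what the paper does: restrict to the dyadic interval $(t/2,t)$ or $(t,2t)$, use that $f$ is non-increasing to pull out $f(t)$ or $f(2t)$, and invoke Lemma~\ref{lemma2}(v). The paper in fact treats the \emph{first} inequality the same way---it restricts to $(t/2,t)$, applies (v), and then enlarges to $(0,t)$---rather than using (iii).

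There is a gap in your treatment of the first inequality. You write
\[
\big\|u^{\lambda-\frac{1}{q}}\b(u)\big\|_{q,(0,t)}\approx t^{\lambda}\b(t)
\]
and cite Lemma~\ref{lemma2}(iii). But (iii) is stated only for $\alpha>0$, whereas the lemma allows arbitrary $\lambda\in\R$. For $\lambda\le 0$ the equivalence is simply false: already with $\b\equiv 1$ the left side is infinite. You only need the lower bound $\gtrsim$, and that does hold for every $\lambda$, but not by (iii); it follows from exactly the localization trick you use for the second inequality, namely
\[
\big\|u^{\lambda-\frac{1}{q}}\b(u)\big\|_{q,(0,t)}\ge \big\|u^{\lambda-\frac{1}{q}}\b(u)\big\|_{q,(t/2,t)}\approx t^{\lambda}\b(t)
\]
via Lemma~\ref{lemma2}(v). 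So the fix is to replace the appeal to (iii) by the same $(t/2,t)$ argument, which is what the paper does.
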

\begin{proof}
Since $f$ is non-increasing, by Lemma \ref{lemma2} (v), we have 
\begin{align*}
t^{\lambda}\b(t)f(t)&\approx\|u^{\lambda-\frac{1}{q}}\b(u)\|_{q,(\frac{t}{2},t)}f(t)\leq \|u^{\lambda-\frac{1}{q}}\b(u)f(u)\|_{q,(\frac{t}{2},t)}\leq\|u^{\lambda-\frac{1}{q}}\b(u)f(u)\|_{q,(0,t)}.
\end{align*}
The other two estimates can be proved similarly.
\end{proof}

\subsection{Hardy-type inequalities}
The following Hardy-type inequalities and their corollaries will be essential parts of our later arguments.

\begin{lem}\label{teHardy1}\cite[Lemma 2.7]{GOT_2005}
Let $ 1\leq q\leq \infty$ and $\b\in SV$. The inequality
$$\Bigl\|u^{\alpha-\frac{1}{q}}\b(u)\int_0^u
f(\tau)\,d\tau\Bigr\|_{q,(0,\infty)}\lesssim
\|u^{\alpha+1-\frac{1}{q}}\b(u)f(u)\|_{q,(0,\infty)}$$
holds for all non-negative Lebesgue measurable functions $f$ on $(0,\infty)$ if and only if $\alpha<0$.
\end{lem}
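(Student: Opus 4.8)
The plan is to prove the two implications separately. For the sufficiency I would treat the ranges $q=1$, $1<q<\infty$ and $q=\infty$ by hand, letting Lemma \ref{lemma2}(iii) --- together with the facts that $\b^q$ and $1/\b$ are again slowly varying (Lemma \ref{lemma2}(i)) --- absorb the slowly varying weight; for the necessity I would argue by contradiction, the borderline exponent $\alpha=0$ being the only real difficulty. The whole picture is the classical Muckenhoupt theory of weighted Hardy inequalities, specialised to the weights $u^{\alpha-1/q}\b(u)$ and $u^{\alpha+1-1/q}\b(u)$.

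\emph{Sufficiency ($\alpha<0\Rightarrow$ the inequality).} For $q=1$ apply Tonelli's theorem to write the left-hand side as $\int_0^\infty f(\tau)\bigl(\int_\tau^\infty u^{\alpha-1}\b(u)\,du\bigr)\,d\tau$, and note that by Lemma \ref{lemma2}(iii) the inner integral is $\approx\tau^{\alpha}\b(\tau)$ since $-\alpha>0$, which is exactly the weight on the right-hand side. For $q=\infty$, factor $\int_0^u f(\tau)\,d\tau=\int_0^u\bigl(\tau^{\alpha+1}\b(\tau)f(\tau)\bigr)\,\tau^{-\alpha-1}\b(\tau)^{-1}\,d\tau\le\|u^{\alpha+1}\b(u)f(u)\|_{\infty,(0,\infty)}\int_0^u\tau^{-\alpha-1}\b(\tau)^{-1}\,d\tau$, and use Lemma \ref{lemma2}(iii) for $1/\b\in SV$ and the exponent $-\alpha>0$ to get $\int_0^u\tau^{-\alpha-1}\b(\tau)^{-1}\,d\tau\approx u^{-\alpha}\b(u)^{-1}$; multiplying by $u^{\alpha}\b(u)$ closes this case. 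For $1<q<\infty$ I would first split: for a parameter $\varepsilon>0$ with $\varepsilon q'<1$ (where $q'$ is the conjugate exponent of $q$), Hölder's inequality gives $\bigl(\int_0^u f\bigr)^q\lesssim u^{(1-\varepsilon q')q/q'}\int_0^u\tau^{\varepsilon q}f(\tau)^q\,d\tau$; then Tonelli together with Lemma \ref{lemma2}(iii) applied to $\b^q\in SV$ reduces the claim to the arithmetic identity that the resulting power of $\tau$ is $(\alpha+1)q-1$. The constraints on $\varepsilon$ amount to $\alpha+1-\tfrac1q<\varepsilon<1-\tfrac1q$, and such an $\varepsilon$ exists precisely because $\alpha<0$. (Alternatively, $1<q<\infty$ follows from the endpoints $q=1$ and $q=\infty$ by real interpolation of the Hardy operator.)

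\emph{Necessity (the inequality $\Rightarrow\alpha<0$).} Suppose $\alpha\ge0$ and seek a contradiction. If $\alpha>0$, take $f=\chi_{(0,1)}$: the right-hand side is $\approx\b(1)<\infty$ by Lemma \ref{lemma2}(iii), whereas the left-hand side dominates $\|u^{\alpha-1/q}\b(u)\|_{q,(1,\infty)}$, which is infinite because $u^{\alpha-1/q}\b(u)\gtrsim u^{\varepsilon-1/q}$ on $(1,\infty)$ for small $\varepsilon>0$. For $\alpha=0$ I would first extract from the hypothesis the Muckenhoupt-type condition
$$A:=\sup_{r>0}\bigl\|u^{-1/q}\b(u)\bigr\|_{q,(r,\infty)}\,\bigl\|u^{-1/q'}\b(u)^{-1}\bigr\|_{q',(0,r)}<\infty$$
by testing against the truncated functions $f_{r,N}(u)=u^{-1}\b(u)^{-q'}\chi_{(1/N,r)}(u)$ and letting $N\to\infty$ (with the obvious modifications at $q=1$ and $q=\infty$); along the way one checks that the two norms occurring in $A$ are finite, since otherwise the estimate forces $\|u^{-1/q}\b(u)\|_{q,(r,\infty)}=0$. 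Then I would contradict $A<\infty$ by Hölder's inequality on the subinterval $(r,1)$:
$$A\ \ge\ \bigl\|u^{-1/q}\b(u)\bigr\|_{q,(r,1)}\bigl\|u^{-1/q'}\b(u)^{-1}\bigr\|_{q',(r,1)}\ \ge\ \int_r^1 u^{-1/q}\b(u)\,u^{-1/q'}\b(u)^{-1}\,du\ =\ \int_r^1\frac{du}{u}\ =\ \log\tfrac1r,$$
which tends to $\infty$ as $r\to0^+$.

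The main obstacle, as indicated, is the endpoint $\alpha=0$ of the necessity part: for $\alpha>0$ a one-line test function suffices, but at $\alpha=0$ the slowly varying factor can make each of the two ``Muckenhoupt halves'' individually finite, so one must exploit their joint size via the Hölder lower bound above rather than estimate either half on its own.
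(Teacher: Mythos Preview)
The paper does not prove this lemma; it is quoted verbatim from \cite[Lemma~2.7]{GOT_2005}, so there is no ``paper's proof'' to compare against. Your sufficiency argument is correct and is the standard one: the cases $q=1$ and $q=\infty$ go through exactly as you describe via Lemma~\ref{lemma2}(iii), and for $1<q<\infty$ the H\"older splitting with a parameter $\varepsilon\in(\alpha+1-\tfrac1q,\,1-\tfrac1q)$ followed by Tonelli and Lemma~\ref{lemma2}(iii) applied to $\b^q$ gives precisely the right power of~$\tau$. The necessity for $\alpha>0$ via $f=\chi_{(0,1)}$ is also fine.

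There is, however, a genuine gap in the $\alpha=0$ necessity. Your final displayed inequality claims
\[
A\ \ge\ \bigl\|u^{-1/q}\b(u)\bigr\|_{q,(r,1)}\,\bigl\|u^{-1/q'}\b(u)^{-1}\bigr\|_{q',(r,1)},
\]
but recall that $A=\sup_{s>0}\|u^{-1/q}\b(u)\|_{q,(s,\infty)}\,\|u^{-1/q'}\b(u)^{-1}\|_{q',(0,s)}$: the two factors live on \emph{disjoint} half-lines $(s,\infty)$ and $(0,s)$, so for no choice of $s$ do both contain the interval $(r,1)$. The H\"older lower bound $\int_r^1\frac{du}{u}=\log\frac1r$ therefore does not follow. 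A clean repair is to bypass the Muckenhoupt constant entirely: test the Hardy inequality with $f=\chi_{(r,2r)}$ to obtain
\[
\|u^{-1/q}\b(u)\|_{q,(2r,\infty)}\ \lesssim\ \b(r)\qquad(r>0),
\]
while Lemma~\ref{lemma2}(iv) gives the reverse estimate $\b(r)\lesssim\|u^{-1/q}\b(u)\|_{q,(r,\infty)}$. Writing $B(r):=\|u^{-1/q}\b(u)\|_{q,(r,\infty)}$, you then have $B(r)\approx\b(r)$ and, by Lemma~\ref{lemma2}(v), $B(r)^q-B(2r)^q=\|u^{-1/q}\b(u)\|_{q,(r,2r)}^q\approx\b(r)^q\approx B(r)^q$, forcing $B(2r)\le\rho\,B(r)$ for some fixed $\rho<1$. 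Iterating gives $B(2^n)\lesssim\rho^n$, a power-type decay that contradicts $B\approx\b\in SV$.
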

Next lemma recovers Corollary 2.9 from \cite{GOT_2005}, and additionally completes it with the cases $0<T<S<\infty$ and $\alpha\leq -1$.
%
%
\begin{lem}\label{lemma5}
Let $\b\in SV$ and $\alpha<0$. Then,
\begin{itemize}
\item[i)] If $0<q<1$, the inequality
\begin{equation}\label{Hardy1TS}
\Bigl\|u^{\alpha-\frac{1}{q}}\b(u)\int_0^u
f(\tau)\,d\tau\Bigr\|_{q,(T,S)}\lesssim T^\alpha\b(T)\int_0^T\!\!f(s)\,ds+
\big\|u^{\alpha+1-\frac{1}{q}}\b(u)f(u)\big\|_{q,(T,S)}
\end{equation}
holds for all $0\leq T<S\leq \infty$ and all non-negative non-increasing Lebesgue measurable functions $f$ on $(0,\infty)$.
\item[ii)] If $1\leq q\leq \infty$, \eqref{Hardy1TS} holds for all $0\leq T<S\leq \infty$ and all non-negative  Lebesgue measurable functions $f$ on $(0,\infty)$.
\end{itemize}
\end{lem}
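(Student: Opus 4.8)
The plan is to reduce Lemma \ref{lemma5} to the already-available global inequality of Lemma \ref{teHardy1} by splitting the integral $\int_0^u f(\tau)\,d\tau = \int_0^T f(\tau)\,d\tau + \int_T^u f(\tau)\,d\tau$ for $u\in(T,S)$, and then estimating the two resulting pieces separately. For the first piece, the factor $\int_0^T f$ is a constant in $u$, so
$$\Bigl\|u^{\alpha-\frac{1}{q}}\b(u)\int_0^T f(\tau)\,d\tau\Bigr\|_{q,(T,S)} = \Bigl(\int_0^T f(\tau)\,d\tau\Bigr)\,\big\|u^{\alpha-\frac{1}{q}}\b(u)\big\|_{q,(T,S)} \lesssim T^\alpha\b(T)\int_0^T f(\tau)\,d\tau,$$
where the last estimate is Lemma \ref{lemma2}(iii) (the second relation there, with $-\alpha>0$), bounding the $L_q$-norm over $(T,S)\subseteq(T,\infty)$ by the norm over $(T,\infty)$, which is $\approx T^\alpha\b(T)$. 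This gives precisely the first term on the RHS of \eqref{Hardy1TS}.

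For the second piece I would extend $f$ by zero outside $(T,S)$ — call it $\tilde f = f\chi_{(T,S)}$ — so that for $u\in(T,S)$ we have $\int_T^u f(\tau)\,d\tau = \int_0^u \tilde f(\tau)\,d\tau$. Then
$$\Bigl\|u^{\alpha-\frac{1}{q}}\b(u)\int_T^u f(\tau)\,d\tau\Bigr\|_{q,(T,S)} \leq \Bigl\|u^{\alpha-\frac{1}{q}}\b(u)\int_0^u \tilde f(\tau)\,d\tau\Bigr\|_{q,(0,\infty)} \lesssim \big\|u^{\alpha+1-\frac{1}{q}}\b(u)\tilde f(u)\big\|_{q,(0,\infty)} = \big\|u^{\alpha+1-\frac{1}{q}}\b(u)f(u)\big\|_{q,(T,S)},$$
using Lemma \ref{teHardy1} for the middle inequality; this handles part (ii) since there $1\leq q\leq\infty$ and $f$ need only be non-negative measurable. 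For part (i), where $0<q<1$, Lemma \ref{teHardy1} is not available, so instead I would use the $q<1$ triangle-type inequality $\|g+h\|_q^q \leq \|g\|_q^q + \|h\|_q^q$ from the start of the argument (to perform the splitting), and for the second piece invoke the corresponding $0<q<1$ Hardy inequality for non-increasing $f$ — this is the content needed from \cite[Corollary 2.9]{GOT_2005} and is exactly why the hypothesis that $f$ be non-increasing is imposed in case (i) but not in case (ii). The non-increasing hypothesis is what rescues the subadditive-norm obstruction: on the truncated interval one compares $\int_0^u \tilde f$ with $u \tilde f(u)$-type quantities using monotonicity of $f$.

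The main subtlety, and the step I expect to require the most care, is the borderline range $\alpha\leq -1$ together with $S<\infty$, which is the new case the lemma adds beyond \cite{GOT_2005}: when $\alpha\leq -1$ the weight $u^{\alpha+1-1/q}\b(u)$ may fail to be locally integrable near $0$, so one cannot simply cite the global inequality over $(0,\infty)$ — the finiteness of the RHS is only being claimed over $(T,S)$ with $T>0$ (or, if $T=0$, only when the RHS happens to be finite, in which case the inequality is trivially true or vacuous). Here the restriction to the interval $(T,S)$ bounded away from $0$ is essential, and one should check that the constants in Lemma \ref{teHardy1} applied to $\tilde f$ (supported in $(T,S)$) do not degenerate — which they do not, since $\tilde f$ vanishes near $0$, so the relevant integrals are automatically over a region where all weights are comparable to finite quantities. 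Finally, combining the two pieces (directly for $q\geq 1$, or via the $q$-th power subadditivity for $q<1$, absorbing the constant $2^{1/q}$ into $\lesssim$) yields \eqref{Hardy1TS}.
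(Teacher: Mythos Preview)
Your argument for part~(ii) is correct and coincides with the paper's: split $\int_0^u f=\int_0^T f+\int_T^u f$, estimate the constant piece by Lemma~\ref{lemma2}(iii), and apply Lemma~\ref{teHardy1} to $\tilde f=f\chi_{(T,S)}$.

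For part~(i), however, there is a genuine gap. After the split, the ``second piece'' is $\big\|u^{\alpha-\frac1q}\b(u)\int_0^u\tilde f\big\|_{q,(0,\infty)}$ with $\tilde f=f\chi_{(T,S)}$, and you propose to invoke a global $0<q<1$ Hardy inequality for non-increasing functions (such as the $(T,S)=(0,\infty)$ case from \cite[Corollary~2.9]{GOT_2005}). But $\tilde f$ is \emph{not} non-increasing on $(0,\infty)$: it vanishes on $(0,T)$ and jumps up at $T$. Monotonicity is precisely what makes the $q<1$ inequality work, so you cannot cite it for $\tilde f$. Nor can you rescue this by working directly with $H(u)=\int_T^u f$: the monotonicity of $f$ on $(T,u)$ gives $H(\tau)\ge(\tau-T)f(\tau)$, hence $H(\tau)^{q-1}\le(\tau-T)^{q-1}f(\tau)^{q-1}$, and since $q-1<0$ one has $(\tau-T)^{q-1}\ge\tau^{q-1}$ --- the wrong direction for recovering $\|u^{\alpha+1-\frac1q}\b(u)f(u)\|_{q,(T,S)}$. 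Using instead $\hat f=f\chi_{(0,S)}$ (which \emph{is} non-increasing) and the global inequality yields $\|u^{\alpha+1-\frac1q}\b(u)f(u)\|_{q,(0,S)}$, and the surplus piece over $(0,T)$ cannot in general be absorbed into the boundary term $T^\alpha\b(T)\int_0^T f$ (try $f\equiv1$ with $\alpha\le-1$).

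The paper's proof of (i) avoids this obstruction by \emph{not} splitting the inner integral. It keeps $G(u)=\int_0^u f$, writes $G(u)^q=q\int_0^u G(\tau)^{q-1}f(\tau)\,d\tau$, and applies Fubini to
\[
\int_T^S u^{\alpha q-1}\b(u)^q G(u)^q\,du
= q\int_T^S u^{\alpha q-1}\b(u)^q\Big(\int_0^T+\int_T^u\Big)G^{q-1}f\,d\tau\,du.
\]
The first part gives $G(T)^q\int_T^S u^{\alpha q-1}\b^q\lesssim T^{\alpha q}\b(T)^q G(T)^q$; the second, after Fubini and $\int_\tau^\infty u^{\alpha q-1}\b^q\approx\tau^{\alpha q}\b(\tau)^q$, becomes $\int_T^S\tau^{\alpha q}\b(\tau)^q G(\tau)^{q-1}f(\tau)\,d\tau$. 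Only now does monotonicity enter, via $G(\tau)\ge\tau f(\tau)$, which gives $G(\tau)^{q-1}f(\tau)\le\tau^{q-1}f(\tau)^q$ with the correct factor $\tau^{q-1}$. The point is that $G$ is the \emph{full} primitive from $0$, so the comparison is with $\tau$, not $\tau-T$.
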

\begin{proof}
We can assume that $f\neq 0$. 

Case $0<q<1$. Denote $G(u)=\int_0^u f(\tau)d\tau$. It is easy to see that 
\begin{equation}\label{ec22}
G^q(u)=q\int_0^u G^{q-1}(\tau)f(\tau)d\tau,\qquad u>0.
\end{equation}
and hence
\begin{align*}
\Bigl\|u^{\alpha-\frac{1}{q}}\b(u)\int_0^u
f(\tau)\,d\tau\Bigr\|^q_{q,(T,S)}
&=\int_T^S u^{\alpha q}\b^q(u)G^q(u)\frac{du}{u}\\
&=q\int_T^S u^{\alpha q}\b^q(u)\int_0^u G^{q-1}(\tau)f(\tau)d\tau\frac{du}{u}.
\end{align*}
Thus, by Fubini's theorem, \eqref{ec22} and Lemma \ref{lemma2}, we deduce 
\begin{align*}
\Bigl\|u^{\alpha-\frac{1}{q}}\b(u)&\int_0^u
f(\tau)\,d\tau\Bigr\|^q_{q,(T,S)}\\
&\approx G^q(T)\int_T^S u^{\alpha q}\b^q(u)\frac{du}{u}+\int_T^S \Big(\int_\tau^S u^{\alpha q}\b^q(u)\frac{du}{u}\Big) G^{q-1}(\tau)f(\tau)d\tau\\
&\leq G^q(T)\int_T^\infty u^{\alpha q}\b^q(u)\frac{du}{u}+\int_T^S \Big(\int_\tau^\infty u^{\alpha q}\b^q(u)\frac{du}{u}\Big) G^{q-1}(\tau)f(\tau)d\tau\\
&\approx T^{\alpha q}\b^q(T)G^q(T)+\int_T^S \tau^{\alpha q}\b^q(\tau)G^{q-1}(\tau)f(\tau)d\tau.
\end{align*}
Furthermore, the hypothesis $f\!\downarrow$ implies $G(u)\geq f(u)\int_0^ud\tau=uf(u)$ and, since $q-1<0$, we have $G^{q-1}(u)\leq (uf(u))^{q-1}$, $u>0$. Multiplying by $f(u)$, we obtain that the inequality
$$G^{q-1}(u)f(u)\leq u^{q-1}f^q(u)$$
 holds for all $u>0$. Consequently, 
$$
\Bigl\|u^{\alpha-\frac{1}{q}}\b(u)\int_0^u
f(\tau)\,d\tau\Bigr\|^q_{q,(T,S)}\lesssim T^{\alpha q}\b^q(T)G^q(T)+\int_T^S \tau^{q(\alpha+1)-1}\b^q(\tau)f^q(\tau)d\tau.$$
Case $1\leq q\leq \infty$. This time we have 
\begin{align*}
\Bigl\|u^{\alpha-\frac{1}{q}}\b(u)&\int_0^u
f(\tau)\,d\tau\Bigr\|_{q,(T,S)}\\&\approx\big\|u^{\alpha-\frac{1}{q}}\b(u)\big\|_{q,(T,S)}\int_0^T
f(\tau)\,d\tau+\Bigl\|u^{\alpha-\frac{1}{q}}\b(u)\int_T^u
f(\tau)\,d\tau\Bigr\|_{q,(T,S)}\\
&\lesssim T^{\alpha-\frac{1}{q}}\b(T)\int_0^T
f(\tau)\,d\tau+\Bigl\|u^{\alpha-\frac{1}{q}}\b(u)\int_T^u
f(\tau)\,d\tau\Bigr\|_{q,(T,S)}.
\end{align*}
Moreover, if we apply Hardy-type inequality of Lemma \ref{teHardy1} we obtain that the second term of the last expression
is bounded by $\|u^{\alpha+1-\frac{1}{q}}\b(u)f(u)\|^q_{q,(T,S)}$ . Indeed,
\begin{align*}
\Bigl\|u^{\alpha-\frac{1}{q}}\b(u)\int_T^u
f(\tau)\,d\tau\Bigr\|_{q,(T,S)}&\leq\Bigl\|u^{\alpha-\frac{1}{q}}\b(u)\int_0^u
f(\tau)\chi_{(T,S)}(\tau)\,d\tau\Bigr\|_{q,(0,\infty)}\\
&\lesssim\Bigl\|u^{\alpha+1-\frac{1}{q}}\b(u)
f(u)\chi_{(T,S)}(u)\Bigr\|_{q,(0,\infty)}\\
&=\Bigl\|u^{\alpha+1-\frac{1}{q}}\b(u)
f(u)\Bigr\|_{q,(T,S)}.
\end{align*}
This completes de proof.
\end{proof}

\begin{cor}\label{corollary6}
Let $\alpha<0$, $0<q,r\leq \infty$, and $a, \b\in SV$. Then, for all non-negative non-increasing Lebesgue measurable functions $f$ on $(0,\infty)$
\begin{align}
\biggl\|t^{-\frac{1}{r}}\b (t)\Bigl\|u^{\alpha-\frac{1}{q}}a(u)&\int_0^u
f(\tau)\,d\tau\Bigr\|_{q,(t,\infty)}\biggr\|_{r,(0,\infty)}\label{ec24}\\
&\approx\Bigl\|t^{-\frac{1}{r}}\b (t)\bigl\|u^{\alpha+1-\frac{1}{q}}a(u)
f(u)\bigr\|_{q,(t,\infty)}\Bigr\|_{r,(0,\infty)}.\nonumber
\end{align}
\end{cor}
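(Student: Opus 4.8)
The plan is to prove the two inequalities in \eqref{ec24} separately. The direction $\gtrsim$ is immediate: since $\int_0^u f(\tau)\,d\tau \ge \int_0^u f(\tau)\,d\tau$ is not useful directly, instead observe that for non-increasing $f$ one has $\int_0^u f(\tau)\,d\tau \ge u f(u)$, so $u^{\alpha-1/q}a(u)\int_0^u f \ge u^{\alpha+1-1/q}a(u)f(u)$ pointwise, and taking $L_q$ on $(t,\infty)$ and then $L_r$ on $(0,\infty)$ gives LHS $\gtrsim$ RHS of \eqref{ec24}. So the substance is the reverse inequality LHS $\lesssim$ RHS.

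For the hard direction, the idea is to fix $t>0$ and apply Lemma \ref{lemma5} with $S=\infty$ and $T=t$ to the inner norm, which yields
$$\Bigl\|u^{\alpha-\frac{1}{q}}a(u)\int_0^u f(\tau)\,d\tau\Bigr\|_{q,(t,\infty)} \lesssim t^\alpha a(t)\int_0^t f(s)\,ds + \bigl\|u^{\alpha+1-\frac{1}{q}}a(u)f(u)\bigr\|_{q,(t,\infty)}.$$
Here I would need to check that Lemma \ref{lemma5} is applicable: for $0<q<1$ it requires $f$ non-increasing (which we have), and for $1\le q\le\infty$ it holds for all non-negative $f$; in both cases $\alpha<0$ is assumed, so this is fine. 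Multiplying by $t^{-1/r}\b(t)$ and taking $L_r$ on $(0,\infty)$, and using the quasi-triangle inequality for $\|\cdot\|_{r,(0,\infty)}$ (with a constant depending only on $r$), the problem reduces to showing
$$\Bigl\|t^{-\frac{1}{r}}\b(t)\, t^\alpha a(t)\int_0^t f(s)\,ds\Bigr\|_{r,(0,\infty)} \lesssim \Bigl\|t^{-\frac{1}{r}}\b(t)\bigl\|u^{\alpha+1-\frac{1}{q}}a(u)f(u)\bigr\|_{q,(t,\infty)}\Bigr\|_{r,(0,\infty)}.$$

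To handle this remaining term, note that $\b\in SV$ and $a\in SV$, so by Lemma \ref{lemma2}(i) the product $\b a \in SV$, and we are looking at $\bigl\|t^{(\alpha+1)-1-\frac1r}(\b a)(t)\,G(t)\bigr\|_{r,(0,\infty)}$ where $G(t)=\int_0^t f$. Since $\alpha+1 \le 1$ but we only know $\alpha<0$, i.e. $\alpha+1<1$, the exponent $\alpha+1$ could be positive, zero, or negative; however, what matters is that we can apply the Hardy-type inequality of Lemma \ref{teHardy1} provided $\alpha<0$, which bounds $\bigl\|t^{\alpha-\frac1r}(\b a)(t)G(t)\bigr\|_{r,(0,\infty)}$ by $\bigl\|t^{\alpha+1-\frac1r}(\b a)(t)f(t)\bigr\|_{r,(0,\infty)}$ when $1\le r\le\infty$; for $0<r<1$ I would instead use Lemma \ref{lemma5} in the form with $T=0$, giving the same bound (the boundary term vanishes since $G(0)=0$), again using $f$ non-increasing. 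Finally, by Lemma \ref{lemma3} applied with $\lambda=\alpha+1<1$ and exponent $q$, together with Lemma \ref{lemma2}(iii)-(iv) to compare the pointwise value $t^{\alpha+1}(\b a)(t)f(t)$ with the inner $L_q$-norm $\|u^{\alpha+1-1/q}a(u)f(u)\|_{q,(t,\infty)}$ (weighted by $\b(t)t^{-1/r}$ and integrated in $r$), we recover exactly RHS of \eqref{ec24}. The main obstacle is the bookkeeping around which Hardy inequality to invoke for $r<1$ versus $r\ge1$ and verifying the monotonicity hypotheses are met at each step; but since $f\downarrow$ is assumed throughout and $\alpha<0$, every invocation of Lemma \ref{lemma5} or Lemma \ref{teHardy1} is legitimate.
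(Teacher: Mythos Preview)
Your approach is essentially the same as the paper's: the easy direction $\gtrsim$ via $\int_0^u f\ge uf(u)$, then Lemma~\ref{lemma5} with $T=t$, $S=\infty$ on the inner norm to split off a boundary term, a Hardy inequality on that boundary term to reduce it to $\|t^{\alpha+1-1/r}(\b a)(t)f(t)\|_{r,(0,\infty)}$, and finally Lemma~\ref{lemma3} plus a change of variables to recover the RHS.

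One point needs correcting in your final step. You say you compare $t^{\alpha+1}a(t)f(t)$ with $\|u^{\alpha+1-1/q}a(u)f(u)\|_{q,(t,\infty)}$, but this pointwise bound is \emph{false} in general: since $f$ is non-increasing, the norm over $(t,\infty)$ only sees values $f(u)\le f(t)$, so the inequality goes the wrong way. What Lemma~\ref{lemma3} actually gives is the bound with interval $(t/2,\infty)$; after multiplying by $t^{-1/r}\b(t)$ and taking the $L_r$-norm, you then need the change of variables $t\mapsto 2t$ in the outer integral (using $\b(2t)\approx\b(t)$ from slow variation) to land on the RHS of \eqref{ec24}. This is exactly what the paper does; Lemma~\ref{lemma2}(iii)--(iv) are not the right references here.

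Your handling of the Hardy step in $r$ is in fact more careful than the paper's write-up: the paper cites Lemma~\ref{teHardy1} (stated for exponents $\ge1$), whereas you correctly note that for $0<r<1$ one should instead invoke Lemma~\ref{lemma5} with $T=0$, the boundary term vanishing because $\int_0^0 f=0$.
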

\begin{proof}
By hypothesis $f\!\downarrow$, then $\int_0^u f(\tau)d\tau\geq uf(u)$ and the estimate ``$\geq$'' holds. Next we prove the reverse estimate. Lemma \ref{lemma5} yields
$$LHS(\ref{ec24})\lesssim \Bigl\|t^{\alpha-\frac{1}{r}}\b(t)a(t)\int_0^t
f(\tau)\,d\tau\Bigr\|_{r,(0,\infty)}+RHS(\ref{ec24}).$$ Moreover, by Hardy-type inequality of Lemma \ref{teHardy1}, Lemma \ref{lemma3} and a change of variables we obtain that the first term of the previous sum is bounded by the second one. That is,
\begin{align*}
\Big\|t^{\alpha-\frac{1}{r}}\b(t)a(t)\int_0^t
f(\tau)\,d\tau\Big\|_{r,(0,\infty)}&\lesssim \big\|t^{\alpha+1-\frac{1}{r}}\b(t)a(t)f(t)\big\|_{r,(0,\infty)}\\
&\lesssim \Bigl\|t^{-\frac{1}{r}}\b (t)\bigl\|u^{\alpha+1-\frac{1}{q}}a(u)
f(u)\bigr\|_{q,(\frac{t}{2},\infty)}\Bigr\|_{r,(0,\infty)}\\
&\approx RHS(\ref{ec24}).
\end{align*}
This completes the proof.
\end{proof}
\begin{cor}\label{corollary7}
Let $\alpha<0$, $0<q,r,s\leq \infty$, and $a, \b, c\in SV$. Then, for all non-negative non-increasing Lebesgue measurable functions $f$ on $(0,\infty)$
\begin{align}
\Biggl\|t^{-\frac{1}{s}}c(t)\biggl\|u^{-\frac{1}{r}}\b&(u)\Bigl\|v^{\alpha-\frac{1}{q}}a(v)\int_0^v
f(\tau)\,d\tau\Bigr\|_{q,(u,\infty)}\biggr\|_{r,(t,\infty)}\Biggr\|_{s,(0,\infty)}\label{ec25}\\
&\approx \biggl\|t^{-\frac{1}{s}}c(t)\Big\|u^{-\frac{1}{r}}\b(u)\big\|v^{\alpha+1-\frac{1}{q}}a(v)
f(v)\big\|_{q,(u,\infty)}\Bigr\|_{r,(t,\infty)}\biggr\|_{s,(0,\infty)}.\nonumber
\end{align}
\end{cor}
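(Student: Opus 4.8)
The inequality ``$\gtrsim$'' in \eqref{ec25} is immediate: since $f$ is non-increasing we have $\int_0^v f(\tau)\,d\tau\ge vf(v)$ for every $v>0$, and each of the three iterated quasi-norms is monotone in its argument. So the whole task is the reverse inequality. The plan is to strip off the inner primitive $\int_0^v f$ with the Hardy-type bound of Lemma \ref{lemma5}, which splits $\mathrm{LHS}(\ref{ec25})$ into a harmless ``main'' part — literally $\mathrm{RHS}(\ref{ec25})$ — and a boundary part of the shape $u^{\alpha}a(u)\int_0^u f$; that boundary part will then be absorbed by a \emph{single} application of Corollary \ref{corollary6}, after collapsing its two innermost layers into one by merging the middle weight $b$ with $a$, which is legitimate because $SV$ is closed under products (Lemma \ref{lemma2}(i)).

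Concretely, I would first apply Lemma \ref{lemma5} with $T=u$, $S=\infty$ (admissible for every $0<q\le\infty$, since $\alpha<0$ and $f\!\downarrow$) to get, for each $u>0$,
\[
\Bigl\|v^{\alpha-\frac1q}a(v)\int_0^v f(\tau)\,d\tau\Bigr\|_{q,(u,\infty)}\lesssim u^{\alpha}a(u)\int_0^u f(s)\,ds+\bigl\|v^{\alpha+1-\frac1q}a(v)f(v)\bigr\|_{q,(u,\infty)}.
\]
Weighting by $u^{-1/r}b(u)$, taking $\|\cdot\|_{r,(t,\infty)}$, weighting by $t^{-1/s}c(t)$, taking $\|\cdot\|_{s,(0,\infty)}$, and invoking the quasi-triangle inequality for $L^r$ and for $L^s$ (constants depending only on $r,s$), I obtain
\[
\mathrm{LHS}(\ref{ec25})\lesssim\Bigl\|t^{-\frac1s}c(t)\bigl\|u^{\alpha-\frac1r}(ab)(u)\textstyle\int_0^u f(\tau)\,d\tau\bigr\|_{r,(t,\infty)}\Bigr\|_{s,(0,\infty)}+\mathrm{RHS}(\ref{ec25}),
\]
the last summand being $\mathrm{RHS}(\ref{ec25})$ verbatim. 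Since $ab\in SV$, Corollary \ref{corollary6} — applied with $ab$, $r$, $s$, $c$ in the roles of $a$, $q$, $r$, $b$ — identifies the first summand, up to equivalence, with $\bigl\|t^{-\frac1s}c(t)\,\|u^{\alpha+1-\frac1r}(ab)(u)f(u)\|_{r,(t,\infty)}\bigr\|_{s,(0,\infty)}$.

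It then remains to show this last expression is $\lesssim\mathrm{RHS}(\ref{ec25})$. Writing $u^{\alpha+1-\frac1r}(ab)(u)f(u)=u^{-\frac1r}b(u)\cdot u^{\alpha+1}a(u)f(u)$ and applying the second inequality of Lemma \ref{lemma3}, namely $u^{\alpha+1}a(u)f(u)\lesssim\|v^{\alpha+1-\frac1q}a(v)f(v)\|_{q,(u/2,\infty)}$, reduces the matter to the equivalence $\bigl\|t^{-\frac1s}c(t)\,\|u^{-\frac1r}b(u)\,\|v^{\alpha+1-\frac1q}a(v)f(v)\|_{q,(u/2,\infty)}\|_{r,(t,\infty)}\bigr\|_{s,(0,\infty)}\approx\mathrm{RHS}(\ref{ec25})$, which follows from the two substitutions $u\mapsto 2u$ and $t\mapsto 2t$ together with $b(2u)\approx b(u)$ and $c(2t)\approx c(t)$ (Definition \ref{def1}); the cases in which $q$, $r$ or $s$ equals $\infty$ are handled by the evident essential-supremum analogues of each step. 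I expect the only real friction to be organisational rather than analytic: getting the role assignment in Corollary \ref{corollary6} exactly right — in particular noticing that the middle weight $b$ may be absorbed into $a$ — and keeping careful track of the two innocuous but necessary passages from $(u/2,\infty)$ back to $(u,\infty)$. All of the genuine analytic content already resides in Lemma \ref{lemma5} and Corollary \ref{corollary6}.
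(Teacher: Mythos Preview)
Your proposal is correct and follows essentially the same approach as the paper: apply Lemma~\ref{lemma5} at the innermost level to split off the boundary term $u^{\alpha}a(u)\int_0^u f$, invoke Corollary~\ref{corollary6} (with $ab\in SV$ playing the role of the inner slowly varying function) to replace $\int_0^u f$ by $uf(u)$ in that boundary term, and then use Lemma~\ref{lemma3} together with the changes of variables $u\mapsto 2u$, $t\mapsto 2t$ to absorb the result into $\mathrm{RHS}(\ref{ec25})$. The paper's proof is organised identically; your write-up is simply more explicit about the role assignments and the quasi-triangle inequalities.
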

\begin{proof}
Since $f\downarrow$ the estimate ``$\gtrsim$'' holds. Let us prove  the reverse estimate ``$\lesssim$''. By Lemma \ref{lemma5}, we have that $LHS(\ref{ec25})$ is bounded by 
\begin{align*}
\biggl\|t^{-\frac{1}{s}}c(t)\Bigl\|u^{-\frac{1}{r}}\b(u)&\Bigl(u^{\alpha}a(u)\int_0^u
f(\tau)\,d\tau+\big\|v^{\alpha+1-\frac{1}{q}}a(v)f(v)\big\|_{q,(u,\infty)}\Bigr)\Bigr\|_{r,(t,\infty)}\biggr\|_{s,(0,\infty)}\\
&\lesssim \biggl\|t^{-\frac{1}{s}}c(t)\Bigl\|u^{\alpha-\frac{1}{r}}\b(u)a(u)\int_0^u
f(\tau)\,d\tau\Bigr\|_{r,(t,\infty)}\biggr\|_{s,(0,\infty)}+RHS(\ref{ec25}).
\end{align*}
Then, to finish the proof, it suffices to show that
$$I:=\biggl\|t^{-\frac{1}{s}}c(t)\Bigl\|u^{\alpha-\frac{1}{r}}\b(u)a(u)\int_0^u
f(\tau)\,d\tau\Bigr\|_{r,(t,\infty)}\biggr\|_{s,(0,\infty)}\lesssim RHS(\ref{ec25}).$$
Using Corollary \ref{corollary6}, Lemma \ref{lemma3}, and the simple change of variables $y=u/2$ and $x=t/2$, we conclude that
\begin{align*}
I&\approx \Bigl\|t^{-\frac{1}{s}} c(t)\bigl\|u^{\alpha+1-\frac{1}{r}}\b(u)a(u)
f(u)\bigr\|_{r,(t,\infty)}\Bigr\|_{s,(0,\infty)}\\
&\lesssim\biggl\|t^{-\frac{1}{s}} c(t)\Bigl\|u^{-\frac{1}{r}}\b(u)\big\|v^{\alpha+1-\frac{1}{q}}a(v)
f(v)\big\|_{q,(\frac{u}{2},\infty)}\Bigr\|_{r,(t,\infty)}\biggr\|_{s,(0,\infty)}\\
&\approx\biggl\|t^{-\frac{1}{s}} c(t)\Bigl\|y^{-\frac{1}{r}}\b(y)\big\|v^{\alpha+1-\frac{1}{q}}a(v)
f(v)\big\|_{q,(y,\infty)}\Bigr\|_{r,(\frac{t}{2},\infty)}\biggr\|_{s,(0,\infty)}\\
&\approx\biggl\|x^{-\frac{1}{s}} c(x)\Bigl\|y^{-\frac{1}{r}}\b(y)\big\|v^{\alpha+1-\frac{1}{q}}a(v)
f(v)\big\|_{q,(y,\infty)}\Bigr\|_{r,(x,\infty)}\biggr\|_{s,(0,\infty)}=RHS(\ref{ec25}).
\end{align*}
\end{proof}

\begin{rem}
Note that in Lemma \ref{lemma5} and in Corollaries \ref{corollary6} and \ref{corollary7} the assumption ``non-increasing''  is necessary only if $0 <q< 1$.
\end{rem}

\section{Interpolation Methods and Function spaces} \label{interpolation methods}

We refer to the monographs \cite{BS_1988,BL_1976,BK_1991,KPS_1982,T_1978} for basic concepts on Interpolation Theory and Banach function spaces.

\subsection{Interpolation Methods}

Next, we collect the definitions and basic properties of the real interpolation methods defined with slowly varying functions. This should give a sufficient background to follow the paper.

In what follows $\overline{A}=(A_{0}, A_{1})$ will be a compatible (quasi-) Banach couple such that $A_0\cap A_1\neq \{0\}$. For $t>0$, the \textit{Peetre $K$-functional} is given by
 \begin{align*}
K(t,f;A_0,A_1)\equiv K(t,f)=\inf \Big \{\|f_0\|_{A_0}+t\|f_1\|_{A_1}:\ f=f_0+f_1,\ f_i\in A_i , \; i=0,1
\Big \}.
\end{align*}

In recent years, the following scale of interpolation spaces with slowly varying functions has been intensively studied.

\begin{defn}\label{defrealmethod}\cite{GOT_2005}
Let $0\leq\theta \leq 1$, $0<q\leq \infty$ and  $a\in SV$. The real interpolation space $\overline{A}_{\theta,q;a}\equiv(A_0,A_1)_{\theta,q;a}$ consists of all $f$ in $A_{0} + A_{1}$ that satisfy
$$ \|f\|_{\theta,q;a} := \big \| t^{-\theta-\frac{1}{q}} a(t) K(t,f) \big \|_{q,(0,\infty)} < \infty.$$
\end{defn}

\begin{lem}\cite[Proposition 2.5]{GOT_2005}
Let $0\leq \theta\leq 1$, $0<q\leq \infty$ and $a\in SV$. 
The space $\overline{A}_{\theta,q;a}$ is  a (quasi-) Banach space. Moreover, $ A_0\cap A_1 \hookrightarrow  \overline{A}_{\theta,q;a} \hookrightarrow A_0+A_1$ if and only if one  of the following conditions is satisfied:
\begin{itemize}
\item[(i)] $0 < \theta < 1$,  

\item[(ii)] $\theta =0$ and $\big\|t^{-\frac{1}{q}}a(t)\big\|_{q,(1,\infty)} \!<\infty$  or 

\item[(iii)] $\theta =1$, $\big\|t^{-\frac{1}{q}}a(t)\big\|_{q,(0,1)} \!<\infty$ .
\end{itemize}
If none of these conditions holds, then $\overline{A}_{\theta,q;a}=\{0\}$. 
\end{lem}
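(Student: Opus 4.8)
The plan is to deduce every assertion from a single two-sided comparison between the $K$-functional and the quasi-norm. First I would establish the pointwise estimate $K(t,f)\lesssim t^{\theta}a(t)^{-1}\|f\|_{\theta,q;a}$, valid for all $t>0$ and all $f\in A_0+A_1$: since $u\mapsto K(u,f)$ is non-decreasing one has $K(u,f)\ge K(t,f)$ on $(t,2t)$, so restricting the defining norm to that interval and invoking Lemma \ref{lemma2}(v) gives $\|f\|_{\theta,q;a}\ge K(t,f)\,\|u^{-\theta-1/q}a(u)\|_{q,(t,2t)}\approx t^{-\theta}a(t)K(t,f)$. This estimate is the workhorse: evaluated at $t=1$ it yields $\|f\|_{A_0+A_1}=K(1,f)\lesssim\|f\|_{\theta,q;a}$, so $\overline A_{\theta,q;a}\hookrightarrow A_0+A_1$ holds unconditionally; and it shows $\|f\|_{\theta,q;a}=0\Rightarrow f=0$, so that $\|\cdot\|_{\theta,q;a}$ is a genuine quasi-norm (homogeneity is trivial, and the quasi-triangle inequality follows from subadditivity of $K(t,\cdot)$ combined with the quasi-norm inequality in $L^q$).

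For completeness I would run the standard $K$-space argument. A Cauchy sequence $(f_n)$ in $\overline A_{\theta,q;a}$ is, by the previous paragraph, Cauchy in $A_0+A_1$, hence converges there to some $f$. Since $K(t,\cdot)$ is Lipschitz on $A_0+A_1$ for each fixed $t$ (because $|K(t,g)-K(t,h)|\le K(t,g-h)\le\max(1,t)\|g-h\|_{A_0+A_1}$), we have $K(t,f_m-f_n)\to K(t,f-f_n)$ as $m\to\infty$; Fatou's lemma applied to the $L^q$ (quasi-)norm — with the usual replacement of the integral by a supremum when $q=\infty$ — then gives $\|f-f_n\|_{\theta,q;a}\le\liminf_m\|f_m-f_n\|_{\theta,q;a}\to0$, whence $f\in\overline A_{\theta,q;a}$ and $f_n\to f$ in $\overline A_{\theta,q;a}$.

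For the equivalence, the sufficiency is a direct computation: from $K(t,f)\le\min(1,t)\|f\|_{A_0\cap A_1}$ one gets $\|f\|_{\theta,q;a}\le\|f\|_{A_0\cap A_1}\,\|t^{-\theta-1/q}a(t)\min(1,t)\|_{q,(0,\infty)}$, and I would check that the last factor is finite under (i)--(iii) by splitting at $t=1$: on $(1,\infty)$ the weight is $t^{-\theta-1/q}a(t)$, finite by Lemma \ref{lemma2}(iii) when $\theta>0$ and equal to $\|t^{-1/q}a(t)\|_{q,(1,\infty)}$ when $\theta=0$; on $(0,1)$ the weight is $t^{1-\theta-1/q}a(t)$, finite by Lemma \ref{lemma2}(iii) when $\theta<1$ and equal to $\|t^{-1/q}a(t)\|_{q,(0,1)}$ when $\theta=1$. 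For the converse I would show that if none of (i)--(iii) holds then $\overline A_{\theta,q;a}=\{0\}$: when $\theta=0$ and $\|t^{-1/q}a(t)\|_{q,(1,\infty)}=\infty$, monotonicity $K(t,f)\ge K(1,f)$ for $t\ge1$ forces $\|f\|_{0,q;a}\ge K(1,f)\|t^{-1/q}a(t)\|_{q,(1,\infty)}$, so $K(1,f)=0$ and $f=0$; when $\theta=1$ and $\|t^{-1/q}a(t)\|_{q,(0,1)}=\infty$, the same reasoning with $K(t,f)\ge t\,K(1,f)$ for $t\le1$ (valid since $u\mapsto K(u,f)/u$ is non-increasing) gives $f=0$. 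Since $\theta\in(0,1)$ always satisfies (i), these exhaust the cases, and because $A_0\cap A_1\neq\{0\}$ the triviality $\overline A_{\theta,q;a}=\{0\}$ precludes the embedding $A_0\cap A_1\hookrightarrow\overline A_{\theta,q;a}$, closing the equivalence.

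The main obstacle here is not conceptual: everything rests on the monotonicity of $t\mapsto K(t,f)$ and $t\mapsto K(t,f)/t$ together with the slowly varying estimates of Lemma \ref{lemma2}. The only places requiring care are the uniform bookkeeping of the two endpoint cases $\theta=0$ and $\theta=1$ — keeping track of which tail integral corresponds to (ii) and which to (iii) — and the routine adjustments for $q=\infty$ in the Fatou and $L^q$ steps.
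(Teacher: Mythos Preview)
The paper does not supply its own proof of this lemma: it is quoted verbatim as \cite[Proposition 2.5]{GOT_2005} and used as background. Your argument is correct and is the standard one for this type of result; the key ingredients you identify---monotonicity of $t\mapsto K(t,f)$ and of $t\mapsto K(t,f)/t$, the bound $K(t,f)\le\min(1,t)\|f\|_{A_0\cap A_1}$, and the slowly varying estimates of Lemma~\ref{lemma2}(iii),(v)---are exactly what is needed, and the completeness step via Fatou is the usual $K$-space argument. There is nothing to compare against in the present paper.
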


\begin{defn}\label{defLR}\cite{GOT_2005}
Let $0\leq \theta\leq 1$, $0<r,q\leq \infty$ and $a, \b\in SV$. The space $\overline{A}_{\theta,r,\b,q,a}^{\mathcal L}\equiv(A_0,A_1)_{ \theta,r,\b,q,a}^{\mathcal L}$ consists of all $f \in A_{0} + A_{1}$ for which 
$$ \|f \|_{\mathcal L;\theta,r,\b,q,a}:=
\Big\|t^{-\frac{1}{r}}\b(t) \|u^{-\theta-\frac{1}{q}} a(u) K(u,f) \|_{q,(0,t)}\Big\|_{r,(0,\infty)} < \infty. $$
Silimilarly, the space
$\overline{A}_{\theta,r,\b,q,a}^{\mathcal R}\equiv(A_0,A_1)_{\theta,r,\b,q,a}^{\mathcal R}$ consists of all $f\in A_0+A_1$ for which
$$\| f  \|_{\mathcal{R};\theta,r,\b,q,a} := \Big \|t^{-\frac{1}{r}}  \b(t) \|   u^{-\theta-\frac{1}{q}} a(u) K(u,f) \|_{q,(t,\infty)}      \Big   \|_{r,(0,\infty)}<\infty.$$
\end{defn}

These spaces naturally arise in reiteration formulae for the limiting cases $\theta=0$ or $\theta=1$. In literature (see e.g. \cite{FS_2012,D_2020B,FS_2015,D_1992,AEEK_2011,DFS_2022}) similar definitions are given alongside with properties of these spaces. As proposed in \cite{FS_2012}, we refer to these spaces as $\mathcal{L}$ and $\mathcal{R}$ limiting interpolation spaces.  The next two lemmas can be proved in the usual way.

\begin{lem} (Cf. \cite{DFS_2022})
Let $0\leq \theta\leq 1$, $0<r,q\leq \infty$ and $a, \b\in SV$. 
The space $\overline{A}_{\theta,r,\b,q,a}^{\mathcal L}$ is  a (quasi-) Banach space. Moreover, $ A_0\cap A_1 \hookrightarrow  \overline{A}_{\theta,r,\b,q,a}^{\mathcal L} \hookrightarrow A_0+A_1$ if and only if one  of the following conditions is satisfied:
\begin{itemize}
\item[(i)] $0 < \theta < 1$  and  $\big\|t^{-\frac{1}{r}}\b(t)\big\|_{r,(1,\infty)} <\infty$,  

\item[(ii)] $\theta =0$ and $ \Big \|t^{-\frac{1}{r}} \b(t)\|u^{-\frac{1}{q}}a(u)\|_{q,(1,t)}  \Big \|_{r,(1,\infty)} \!< \infty$  or 

\item[(iii)] $\theta =1$, $\big\|t^{-\frac{1}{r}}\b(t)\big\|_{r,(1,\infty)} \!<\infty$ and $\Big \|t^{-\frac{1}{r}} \b(t) \|u^{-\frac{1}{q}}a(u)\|_{q,(0,t)} \Big \|_{r,(0,1)} \!< \infty$.
\end{itemize}
If none of these conditions holds, then $\overline{A}^{\mathcal L}_{\theta,r,\b,q,a}$ is  the trivial space.
\end{lem}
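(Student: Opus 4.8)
The plan is to follow the by-now standard template for spaces of this kind (cf.\ the proofs in \cite{GOT_2005,DFS_2022}), reducing everything to the monotonicity and continuity of $t\mapsto K(t,f)$ together with the elementary estimates for slowly varying functions collected in Lemmas \ref{lemma2} and \ref{lemma3}. Throughout I write $\|\cdot\|_{\mathcal L}$ for $\|\cdot\|_{\mathcal L;\theta,r,\b,q,a}$ and I use that
$$g\mapsto\big\|t^{-\frac{1}{r}}\b(t)\,\|u^{-\theta-\frac{1}{q}}a(u)g(u)\|_{q,(0,t)}\big\|_{r,(0,\infty)}$$
is a monotone quasi-norm on nonnegative measurable functions, with quasi-triangle constant depending only on $q$ and $r$.

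\emph{Quasi-norm, completeness, and the embedding into $A_0+A_1$.} Homogeneity of $\|\cdot\|_{\mathcal L}$ is clear, and $\|f\|_{\mathcal L}=0$ forces $K(u,f)=0$ for a.e.\ $u>0$, hence for every $u>0$ by continuity, so $f=0$ in $A_0+A_1$; the quasi-triangle inequality comes from $K(u,f+g)\le K(u,f)+K(u,g)$ together with the observation above. The embedding $\overline{A}^{\mathcal L}_{\theta,r,\b,q,a}\hookrightarrow A_0+A_1$ holds with no restriction on the parameters: since $\|f\|_{A_0+A_1}=K(1,f)\le 2K(u,f)$ for $u\in(\frac{1}{2},1)$, restricting the outer norm to $t\in(2,3)$ and the inner one to $u\in(\frac{1}{2},1)$ and using Lemma \ref{lemma2}(v) gives
$$\|f\|_{\mathcal L}\gtrsim\big\|t^{-\frac{1}{r}}\b(t)\,\|u^{-\theta-\frac{1}{q}}a(u)K(u,f)\|_{q,(\frac{1}{2},1)}\big\|_{r,(2,3)}\gtrsim K(1,f)=\|f\|_{A_0+A_1}.$$
Completeness then follows in the usual way: a Cauchy sequence $(f_n)$ converges in $A_0+A_1$ to some $f$, and since $|K(u,f_n-f_m)-K(u,f-f_m)|\le K(u,f_n-f)\le\max(1,u)\|f_n-f\|_{A_0+A_1}$, Fatou's lemma for the mixed $L_r(L_q)$ quasi-norm yields $\|f-f_m\|_{\mathcal L}\le\liminf_n\|f_n-f_m\|_{\mathcal L}\to0$, so $f\in\overline{A}^{\mathcal L}_{\theta,r,\b,q,a}$ and $f_m\to f$ there.

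\emph{The embedding $A_0\cap A_1\hookrightarrow\overline{A}^{\mathcal L}_{\theta,r,\b,q,a}$ and triviality.} For the embedding use $K(u,f)\le\min(1,u)\|f\|_{A_0\cap A_1}$ and split the outer integral at $t=1$. On $(0,1)$ one has $\min(1,u)=u$ on $(0,t)$, so Lemma \ref{lemma2}(iii) controls the contribution when $0\le\theta<1$, while for $\theta=1$ the surviving term is precisely $\|t^{-\frac{1}{r}}\b(t)\|u^{-\frac{1}{q}}a(u)\|_{q,(0,t)}\|_{r,(0,1)}$, the second quantity in (iii). On $(1,\infty)$ split the inner integral at $u=1$: the piece over $(1,t)$ is $\lesssim a(1)$ whenever $\theta>0$, giving the factor $\|t^{-\frac{1}{r}}\b(t)\|_{r,(1,\infty)}$ of (i) and (iii), whereas for $\theta=0$ it equals the mixed quantity of (ii). Conversely, fix $f\ne0$, so $K(u,f)\ge\min(1,u)K(1,f)$ with $K(1,f)>0$. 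If $0<\theta<1$ and (i) fails, i.e.\ $\|t^{-\frac{1}{r}}\b(t)\|_{r,(1,\infty)}=\infty$, bounding the inner norm from below on $(\frac{1}{2},1)$ gives $\|f\|_{\mathcal L}=\infty$; if $\theta=0$ and (ii) fails, bounding the inner norm from below on $(1,t)$ by $K(1,f)\|u^{-\frac{1}{q}}a(u)\|_{q,(1,t)}$ does the same; if $\theta=1$ and (iii) fails, one of its two conjuncts is infinite, the first being handled as in the case $0<\theta<1$ and the second by using $K(u,f)\ge uK(1,f)$ on $(0,t)$ to bound the inner norm from below by $K(1,f)\|u^{-\frac{1}{q}}a(u)\|_{q,(0,t)}$.

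The one genuinely delicate point is the bookkeeping in the last step: one must pair each of the finitely many ways in which (i)--(iii) can fail with a concrete subinterval of $(0,\infty)$ on which the relevant weight is not integrable, and check that the slowly varying factor $a$ never spoils the estimates — this is ensured by Lemma \ref{lemma2}(iii)--(iv), in particular by $\|u^{-\frac{1}{q}}a(u)\|_{q,(1,t)}\gtrsim a(1)$ and the analogous lower bounds, which also show that (ii) forces $\|t^{-\frac{1}{r}}\b(t)\|_{r,(1,\infty)}<\infty$ and that (iii) forces $\|u^{-\frac{1}{q}}a(u)\|_{q,(0,1)}<\infty$. Everything else is a routine transcription of the proof of the corresponding statement for $\overline{A}_{\theta,q;a}$.
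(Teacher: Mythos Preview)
The paper does not actually give a proof of this lemma: it simply states that ``the next two lemmas can be proved in the usual way'' and refers to \cite{DFS_2022}. Your proposal is a correct and careful execution of precisely that standard argument---monotonicity and subadditivity of $K(\cdot,f)$, the elementary bounds $\min(1,u)K(1,f)\le K(u,f)\le\min(1,u)\|f\|_{A_0\cap A_1}$, and the slowly-varying estimates of Lemmas~\ref{lemma2} and~\ref{lemma3}---so there is nothing to compare.
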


\begin{lem} (Cf. \cite{DFS_2022})
Let $0\leq \theta\leq 1$, $0<r,q\leq \infty$ and $a, \b\in SV$. The space $\overline{A}_{\theta,r,\b,q,a}^{\mathcal R}$ is a (quasi-) Banach space. Moreover, 
$ A_0\cap A_1 \hookrightarrow  \overline{A}_{\theta,r,\b,q,a}^{\mathcal R} \hookrightarrow A_0+A_1$ one of the following conditions is satisfied:
\begin{enumerate}
\item[(i)] $0 < \theta < 1$  and  $\big\|t^{-\frac{1}{r}}\b(t)\big\|_{r,(0,1)} <\infty$,  

\item[(ii)] $\theta =0$, $\big\|t^{-\frac{1}{r}}\b(t)\big\|_{r,(0,1)} \!<\infty$ and $\Big \|t^{-\frac{1}{r}} \b(t) \|u^{-\frac{1}{q}}a(u)\|_{q,(t,\infty)} \Big \|_{r,(1,\infty)} \!< \infty$.

\item[(iii)] $\theta =1$ and $ \Big \|t^{-\frac{1}{r}} \b(t)\|u^{-\frac{1}{q}}a(u)\|_{q,(t,1)}  \Big \|_{r,(0,1)} \!< \infty$  or 
\end{enumerate}
Otherwise, $\overline{A}^{\mathcal R}_{\theta,r,\b,q,a}=\{0\}$.
\end{lem}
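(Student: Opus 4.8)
The plan is to mimic the proofs of the corresponding statements for $\overline{A}_{\theta,q;a}$ (Definition \ref{defrealmethod}) and for the $\mathcal{L}$-construction given just above, the only inputs being the standard facts that, for $f\in A_0+A_1$, the map $u\mapsto K(u,f)$ is continuous, non-negative and non-decreasing, $u\mapsto K(u,f)/u$ is non-increasing, and $K(1,f)\approx\|f\|_{A_0+A_1}$; consequently $\min(1,u)\,K(1,f)\le K(u,f)$ for every $f\in A_0+A_1$, while $K(u,f)\le\min(1,u)\max\{\|f\|_{A_0},\|f\|_{A_1}\}$ for every $f\in A_0\cap A_1$. The ``(quasi-) Banach space'' assertion is then routine: positivity a.e. of the weights $t^{-1/r}\b(t)$ and $u^{-\theta-1/q}a(u)$ together with the continuity of $K(\cdot,f)$ shows $\|f\|_{\mathcal{R};\theta,r,\b,q,a}=0\Rightarrow f=0$; the quasi-triangle inequality is inherited from $K(u,f+g)\le K(u,f)+K(u,g)$ and the quasi-triangle inequalities of $L_q$ and $L_r$; and completeness follows from the usual Fatou argument on the iterated norms once the embedding into $A_0+A_1$ is known (the trivial case being vacuous).

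The core is the quantity
$$C_{\mathcal{R}}:=\Big\|t^{-\tfrac1r}\b(t)\,\big\|u^{-\theta-\tfrac1q}a(u)\min(1,u)\big\|_{q,(t,\infty)}\Big\|_{r,(0,\infty)},$$
and I would prove that $C_{\mathcal{R}}<\infty$ if and only if one of (i)--(iii) holds. The upper bound on $K(u,f)$ gives at once $\|f\|_{\mathcal{R};\theta,r,\b,q,a}\le C_{\mathcal{R}}\max\{\|f\|_{A_0},\|f\|_{A_1}\}$, so $C_{\mathcal{R}}<\infty$ yields $A_0\cap A_1\hookrightarrow\overline{A}^{\mathcal{R}}_{\theta,r,\b,q,a}$; conversely, using $K(u,f)\ge\min(1,u)K(1,f)$ and restricting the outer norm to $t\in(\tfrac12,1)$ (so the inner norm runs over $(t,\infty)\supset(1,2)$ and is bounded below by a fixed positive constant) one obtains $\|f\|_{A_0+A_1}\approx K(1,f)\lesssim\|f\|_{\mathcal{R};\theta,r,\b,q,a}$, i.e. $\overline{A}^{\mathcal{R}}_{\theta,r,\b,q,a}\hookrightarrow A_0+A_1$ unconditionally. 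To evaluate $C_{\mathcal{R}}$ I would split the outer integral at $t=1$ and the inner integral at $u=1$. For $t>1$ one has $\min(1,u)\equiv1$ on $(t,\infty)$; by Lemma \ref{lemma2}(iii) the inner norm is $\approx t^{-\theta}a(t)$ when $\theta>0$, and then $\|t^{-\theta-1/r}\b(t)a(t)\|_{r,(1,\infty)}$ is a finite constant (Lemma \ref{lemma2}(iii) again), contributing nothing, whereas for $\theta=0$ it equals $\|u^{-1/q}a(u)\|_{q,(t,\infty)}$ and reproduces the second clause of (ii). For $t<1$ the inner norm is $\approx\|u^{1-\theta-1/q}a(u)\|_{q,(t,1)}+\|u^{-\theta-1/q}a(u)\|_{q,(1,\infty)}$; by Lemma \ref{lemma2}(iii) the first summand is a finite constant if $\theta<1$ and equals $\|u^{-1/q}a(u)\|_{q,(t,1)}$ if $\theta=1$, while the second summand is finite precisely when $\theta>0$, or when $\theta=0$ and $\|u^{-1/q}a(u)\|_{q,(1,\infty)}<\infty$ (which is forced by the second clause of (ii)).

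Collecting terms gives: $C_{\mathcal{R}}\approx\|t^{-1/r}\b(t)\|_{r,(0,1)}$ if $0<\theta<1$, which is (i); the two clauses of (ii) if $\theta=0$; and, if $\theta=1$,
$$C_{\mathcal{R}}\approx\Big\|t^{-\tfrac1r}\b(t)\,\|u^{-\tfrac1q}a(u)\|_{q,(t,1)}\Big\|_{r,(0,1)}+\|u^{-\tfrac1q}a(u)\|_{q,(\tfrac12,1)}\,\|t^{-\tfrac1r}\b(t)\|_{r,(0,1)}.$$
Here I would invoke Lemma \ref{lemma2}(v): since $\|u^{-1/q}a(u)\|_{q,(t,1)}\gtrsim\|u^{-1/q}a(u)\|_{q,(1/2,1)}$ for $t\le\tfrac12$ and $\|t^{-1/r}\b(t)\|_{r,(1/2,1)}$ is a finite constant, the first summand already dominates the second, so $C_{\mathcal{R}}<\infty\iff$ (iii); this is the one place where a seemingly missing hypothesis is absorbed, and the analogous remark applies to the second clause of (ii) (which subsumes $\|t^{-1/r}\b(t)\|_{r,(1,\infty)}<\infty$).

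Finally, for the ``otherwise'' assertion I would check that when none of (i)--(iii) holds, $\|f\|_{\mathcal{R};\theta,r,\b,q,a}=\infty$ for every $f\ne0$ in $A_0+A_1$ (such $f$ exist since $A_0\cap A_1\ne\{0\}$), whence $\overline{A}^{\mathcal{R}}_{\theta,r,\b,q,a}=\{0\}$: if $0<\theta<1$ and $\|t^{-1/r}\b(t)\|_{r,(0,1)}=\infty$, bound $\|f\|_{\mathcal{R}}$ below by restricting the inner norm to $(1,2)$ and the outer to $(0,1)$ and using $K(u,f)\ge K(1,f)>0$ there; if $\theta=0$ and the second clause of (ii) fails, restrict the outer norm to $(1,\infty)$ and again use $K(u,f)\ge K(1,f)$ on $u\ge t\ge1$; if $\theta=1$ and (iii) fails, restrict the inner norm to $(t,1)$ and use $K(u,f)\ge u\,K(1,f)$ on $0<u\le1$. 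I expect no serious obstacle beyond this three-regime bookkeeping and the absorption observation above. Alternatively, the whole lemma follows from its $\mathcal{L}$-counterpart via the substitution $t\leadsto1/t$ and the identity $K(t,f;A_0,A_1)=t\,K(1/t,f;A_1,A_0)$, which turns $\overline{A}^{\mathcal{R}}_{\theta,r,\b,q,a}(A_0,A_1)$ into $\overline{A}^{\mathcal{L}}_{1-\theta,r,\widetilde{\b},q,\widetilde{a}}(A_1,A_0)$ with $\widetilde{\b}(t)=\b(1/t)$, $\widetilde{a}(t)=a(1/t)$ (again in $SV$ by Lemma \ref{lemma2}(i)).
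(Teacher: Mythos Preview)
The paper does not prove this lemma; it simply says ``the next two lemmas can be proved in the usual way'' and cites \cite{DFS_2022}. Your argument---reducing both embeddings to the finiteness of the single quantity $C_{\mathcal R}$, then evaluating $C_{\mathcal R}$ by splitting the inner and outer norms at $u=1$ and $t=1$---is precisely that usual way, and your case analysis and triviality arguments are correct. The alternative route via $t\leadsto 1/t$ and $K(t,f;A_0,A_1)=tK(1/t,f;A_1,A_0)$ is also valid and arguably the cleanest.

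Two minor points. First, your parenthetical that the second clause of (ii) ``subsumes $\|t^{-1/r}\b(t)\|_{r,(1,\infty)}<\infty$'' is false in general (e.g.\ $q,r<\infty$, $a(u)=\ell^{-2/q}(u)$, $\b(t)=\ell^{-1/r+1/(2q)}(t)$); fortunately you never use this, since the second clause is exactly the $(1,\infty)$-contribution to $C_{\mathcal R}$ and nothing further is required. Second, in the ``otherwise'' paragraph you explicitly treat $\theta=0$ only when the second clause of (ii) fails; when the second clause holds but the first fails, the same restriction you use for $0<\theta<1$ (inner norm to $(1,2)$, outer to $(0,1)$) gives $\|f\|_{\mathcal R}\gtrsim K(1,f)\,\|t^{-1/r}\b(t)\|_{r,(0,1)}=\infty$, so you should add a word to cover this subcase.
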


In the next definition, we introduce four more interpolation spaces. We follow \cite{FS_2021A,FS_2021B,FS_2021C,D_2021A,DFS_2022} where it has been shown that they appear in relation with the extreme reiteration results.

\begin{defn}\label{defLRR^}
Let $0\leq \theta\leq 1$, $0<s,r,q\leq \infty$ and $a, \b, c\in SV$. 
The space
$\overline{A}_{\theta,s,c,r,\b,q,a}^{\mathcal L,\mathcal L}\equiv(A_0,A_1)_{ \theta,s,c,r,\b,q,a}^{\mathcal L,\mathcal L}$ is the set of all $f\in A_0+A_1$ for which  the quasi-norm
$$ \|f \|_{\mathcal L,\mathcal L;\theta,s,c,r,\b,q,a} :=
\bigg\|t^{-\frac{1}{s}}c(t)\Big\|u^{-\frac{1}{r}}\b(u) \big\|v^{-\theta-\frac{1}{q}} a(v) K(v,f) \big\|_{q,(0,u)}\Big\|_{r,(0,t)}\bigg\|_{s,(0,\infty)}$$
is finite. The spaces $\overline{A}_{\theta,s,c,r,\b,q,a}^{\mathcal L,\mathcal R}\equiv(A_0,A_1)_{ \theta,s,c,r,\b,q,a}^{\mathcal L,\mathcal R}$, $\overline{A}_{\theta,s,c,r,\b,q,a}^{\mathcal R,\mathcal L}\equiv(A_0,A_1)_{ \theta,s,c,r,\b,q,a}^{\mathcal R,\mathcal L}$, $\overline{A}_{\theta,s,c,r,\b,q,a}^{\mathcal R,\mathcal R}\equiv(A_0,A_1)_{ \theta,s,c,r,\b,q,a}^{\mathcal R,\mathcal R}$ are defined via the quasi-norms
$$ \|f \|_{\mathcal L,\mathcal R;\theta,s,c,r,\b,q,a} :=\bigg\|t^{-\frac{1}{s}}c(t)\Big\|u^{-\frac{1}{r}}\b(u) \big\|v^{-\theta-\frac{1}{q}} a(v) K(v,f) \big\|_{q,(t,u)}\Big\|_{r,(t,\infty)}\bigg\|_{s,(0,\infty)}
,$$
$$ \|f \|_{\mathcal R,\mathcal L;\theta,s,c,r,\b,q,a} :=\bigg\|t^{-\frac{1}{s}}c(t)\Big\|u^{-\frac{1}{r}}\b(u) \big\|v^{-\theta-\frac{1}{q}} a(v) K(v,f) \big\|_{q,(u,t)}\Big\|_{r,(0,t)}\bigg\|_{s,(0,\infty)}$$
and 
$$ \|f \|_{\mathcal R,\mathcal R;\theta,s,c,r,\b,q,a} :=
\bigg\|t^{-\frac{1}{s}}c(t)\Big\|u^{-\frac{1}{r}}\b(u) \big\|v^{-\theta-\frac{1}{q}} a(v) K(v,f) \big\|_{q,(u,\infty)}\Big\|_{r,(t,\infty)}\bigg\|_{s,(0,\infty)},$$
respectively.
\end{defn}

For these spaces it is possible to formulate conditions under which they are trivial. For example, if $\|u^{-\frac{1}{r}}\b(u)\|_{r,(1,\infty)}=\infty$ then $\overline{A}_{\theta,s,c,r,\b,q,a}^{\mathcal L,\mathcal R}=\{0\}$. We leave this to the reader.  For other combinations of parameters, they are non-trivial interpolation spaces between $A_0$ and $A_1$. In what follows we will assume that no spaces under consideration are trivial. We refer to these spaces as 
$\mathcal{L}\mathcal{L}$, $\mathcal{L}\mathcal{R}$, $\mathcal{R}\mathcal{L}$ and $\mathcal{R}\mathcal{R}$ extremal interpolation spaces.

\subsection{Function spaces}
Next we define function spaces under consideration which quasi-norm are based pairwise on $f^*$ and on $f^{**}$. (The latter ones will be denote as $L_{(p,q;a)}$ and ($L$)-spaces). The principal aim of this paper is to investigate when these spaces are equal (with equivalent quasi-norms).

\begin{defn}
Let $0<p,q\leq \infty$ and $a\in SV$. The Lorentz-Karamata type spaces $L_{p,q;a}$ and $L_{(p,q;a)}$ are the sets of all $f\in\mathcal{M}(\Omega,\mu)$ such that 
\begin{equation}\label{eLK}
\|f\|_{p,q;a}:=\big\|t^{\frac{1}{p}-\frac{1}{q}}a(t)f^*(t)\big\|_{q,(0,\infty)}<\infty
\end{equation}
and 
$$\|f\|_{(p,q;a)}:=\big\|t^{\frac{1}{p}-\frac{1}{q}}a(t)f^{**}(t)\big\|_{q,(0,\infty)}<\infty,$$
respectively.
\end{defn}

The Lorentz-Karamata spaces comprises important scales of spaces. It contains e.g. the Lebesgue spaces $L_q$, Lorentz spaces $L_{p,q}$, Lorentz-Zygmund, and the generalized Lorentz-Zygmund spaces. We refer to \cite{NE_2002,GOT_2005,P_2021,FS_2014} for further information about Lorentz-Karamata spaces and to \cite{BS_1988,AFH_2020,GOT_2005,P_2021,AEEK_2011,FS_2014,EO_2000,OP_1999} for important applications in analysis.

\begin{defn} (Cf. \cite[(5.21), (5.33)]{GOT_2005}
Let $0<p,q,r\leq \infty$ and $a, \b\in SV$. The spaces $L_{p,r,\b,q,a}^{\mathcal L}$, $(L)_{p,r,\b,q,a}^{\mathcal L}$, $L_{p,r,\b,q,a}^{\mathcal R}$, $(L)_{p,r,\b,q,a}^{\mathcal R}$ are the sets of all $f\in\mathcal{M}(\Omega,\mu)$ such that 
\begin{equation}\label{eLKL}
\|f\|_{L_{p,r,\b,q,a}^{\mathcal L}}:=\Big\|t^{-\frac{1}{r}}\b(t)\big\|u^{\frac{1}{p}-\frac{1}{q}}a(u)f^*(u)\big\|_{q,(0,t)}\Big\|_{r,(0,\infty)}<\infty,
\end{equation}
$$\|f\|_{(L)_{p,r,\b,q,a}^{\mathcal L}}:=\Big\|t^{-\frac{1}{r}}\b(t)\big\|u^{\frac{1}{p}-\frac{1}{q}}a(u)f^{**}(u)\big\|_{q,(0,t)}\Big\|_{r,(0,\infty)}<\infty,$$
$$\|f\|_{L_{p,r,\b,q,a}^{\mathcal R}}:=\Big\|t^{-\frac{1}{r}}\b(t)\big\|u^{\frac{1}{p}-\frac{1}{q}}a(u)f^*(u)\big\|_{q,(t,\infty)}\Big\|_{r,(0,\infty)}<\infty,$$
or
$$\|f\|_{(L)_{p,r,\b,q,a}^{\mathcal R}}:=\Big\|t^{-\frac{1}{r}}\b(t)\big\|u^{\frac{1}{p}-\frac{1}{q}}a(u)f^{**}(u)\big\|_{q,(t,\infty)}\Big\|_{r,(0,\infty)}<\infty,$$
respectively.
\end{defn}

We will require that $\big\|t^{-\frac{1}{r}}\b(t)\big\|_{r,(1,\infty)}<\infty$ for $L_{p,r,\b,q,a}^{\mathcal{L}}$ and $(L)_{p,r,\b,q,a}^{\mathcal L}$ spaces and $\big\|t^{-\frac{1}{r}}\b(t)\big\|_{r,(0,1)}<\infty$ for $L_{p,r,\b,q,a}^{\mathcal R}$ and $(L)_{p,r,\b,q,a}^{\mathcal R}$ spaces. Otherwise the corresponding spaces consist only on the null-element. Similar definitions can be found in \cite{FS_2021C,FS_2015,EO_2000,D_2021A,D_2018A}. We refer to these spaces 
as $\mathcal{L}$ and $\mathcal{R}$ Lorentz-Karamata spaces, respectively. Note that the $\mathcal{L}$ spaces are special cases of Generalized Gamma spaces with double weights \cite{FFGKR_2018}.

In order to compare our results with those from \cite{FFGKR_2018,AFH_2020,FS_2021B,FS_2015} we introduce the grand and small Lorentz-Karamata spaces.

\begin{defn}
Let $0<p,q,r\leq \infty$ and $\b\in SV$. The small Lorentz-Karamata spaces $L^{(p,r,q}_\b$ and $(L)^{(p,q,r}_\b$ and the grand Lorentz-Karamata spaces $L^{p),q,r}_\b$ and $(L)^{p),q,r}_\b$  are the sets of all $f\in\mathcal{M}(\Omega,\mu)$ such that 
\begin{equation}\label{eL}
\|f\|_{L^{(p,q,r}_\b}:=\Big\|t^{-\frac{1}{r}}\b(t)\big\|u^{\frac{1}{p}-\frac{1}{q}}f^*(u)\big\|_{q,(0,t)}\Big\|_{r,(0,\infty)}<\infty,
\end{equation}
$$\|f\|_{(L)^{(p,q,r}_\b}:=\Big\|t^{-\frac{1}{r}}\b(t)\big\|u^{\frac{1}{p}-\frac{1}{q}}f^{**}(u)\big\|_{q,(0,t)}\Big\|_{r,(0,\infty)}<\infty,$$
$$\|f\|_{L^{p),q,r}_\b}:=\Big\|t^{-\frac{1}{r}}\b(t)\big\|u^{\frac{1}{p}-\frac{1}{q}}f^*(u)\big\|_{q,(t,\infty)}\Big\|_{r,(0,\infty)}<\infty,$$
or
$$\|f\|_{(L)^{p),q,r}_\b}:=\Big\|t^{-\frac{1}{r}}\b(t)\big\|u^{\frac{1}{p}-\frac{1}{q}}f^{**}(u)\big\|_{q,(t,\infty)}\Big\|_{r,(0,\infty)}<\infty,$$
respectively.
\end{defn}

\begin{rem}\label{remark21}
It is clear that $L^{(p,q,r}_\b=L_{p,r,\b,q,1}^{\mathcal{L}}$, $(L)^{(p,q,r}_\b=(L)_{p,r,\b,q,1}^{\mathcal{L}}$, $L^{p),q,r}_\b=L_{p,r,\b,q,1}^{\mathcal{R}}$ and $(L)^{p),q,r}_\b=(L)_{p,r,\b,q,1}^{\mathcal{R}}$.
\end{rem}


Grand and small Lebesgue and Lorentz spaces find many important applications and they have been intensive studied by different authors. See \cite{FFGKR_2018,AFH_2020,FS_2021A,FS_2021B,FS_2021C,FS_2015} and the references therein. These spaces are often defined on a bounded domain $\Omega$ in $\R^n$ with measure 1; sometimes, see \cite{AFH_2020}, they are also restricted to real valued functions. Here, we do not require that $\mu(\Omega)=1$ and neither that the functions are real-valued.

\begin{defn}
Let $0<p,q,r,s\leq \infty$ and $a, \b, c\in SV$. The spaces $L^{\mathcal L,\mathcal L}_{p,(s,c,r,\b,q,a)}$,   $L^{\mathcal L,\mathcal R}_{p,(s,c,r,\b,q,a)}$,   $L^{\mathcal R,\mathcal L}_{p,(s,c,r,\b,q,a)}$ and $L^{\mathcal R,\mathcal R}_{p,(s,c,r,\b,q,a)}$ 
are the set of all $f\in\mathcal{M}(\Omega,\mu)$ such that 
\begin{equation}\label{ecL1}
\|f \|_{L^{\mathcal L,\mathcal L}_{p,(s,c,r,\b,q,a)}} :=
\bigg\|t^{-\frac{1}{s}}c(t)\Big\|u^{-\frac{1}{r}}\b(u) \|v^{\frac{1}{p}-\frac{1}{q}} a(v) f^*(v) \|_{q,(0,u)}\Big\|_{r,(0,t)}\bigg\|_{s,(0,\infty)},
\end{equation}
\begin{equation}\label{ecL2} \|f \|_{L^{\mathcal L,\mathcal R}_{p,(s,c,r,\b,q,a)}} :=
\bigg\|t^{-\frac{1}{s}}c(t)\Big\|u^{-\frac{1}{r}}\b(u) \|v^{\frac{1}{p}-\frac{1}{q}} a(v) f^*(v) \|_{q(t,u)}\Big\|_{r,(t,\infty)}\bigg\|_{s,(0,\infty)},
\end{equation}
\begin{equation}\label{ecL3}\|f \|_{L^{\mathcal R,\mathcal L}_{p,(s,c,r,\b,q,a)}} :=
\bigg\|t^{-\frac{1}{s}}c(t)\Big\|u^{-\frac{1}{r}}\b(u) \|v^{\frac{1}{p}-\frac{1}{q}} a(v) f^*(v) \|_{q,(u,t)}\Big\|_{r,(0,t)}\bigg\|_{s,(0,\infty)},\end{equation}
\begin{equation}\label{ecL4} \|f \|_{L^{\mathcal R,\mathcal R}_{p,(s,c,r,\b,q,a)}} :=
\bigg\|t^{-\frac{1}{s}}c(t)\Big\|u^{-\frac{1}{r}}\b(u) \|v^{\frac{1}{p}-\frac{1}{q}} a(v) f^*(v) \|_{q,(u,\infty)}\Big\|_{r,(t,\infty)}\bigg\|_{s,(0,\infty)},\end{equation}
respectively. 

The spaces $(L)^{\mathcal L,\mathcal L}_{p,(s,c,r,\b,q,a)}$,  $(L)^{\mathcal L,\mathcal R}_{p,(s,c,r,\b,q,a)}$,  $(L)^{\mathcal R,\mathcal L}_{p,(s,c,r,\b,q,a)}$ and $(L)^{\mathcal R,\mathcal R}_{p,(s,c,r,\b,q,a)}$ are defined as the set of all $f\in\mathcal{M}(\Omega,\mu)$ such that \eqref{ecL1}-\eqref{ecL4} are satisfied after the change of $f^*$ by $f^{**}$, respectively.
\end{defn}

We refer to these spaces as ${\mathcal L\mathcal L}$, ${\mathcal L\mathcal R}$, ${\mathcal R\mathcal L}$ and ${\mathcal R\mathcal R}$ Lorentz-Karamata spaces. 

The next lemma follows from Lemma \ref{lemma3}. The proof is left to the reader.

\begin{lem}\label{lemma23}
Let $0<p,q,r,s\leq \infty$ and $a, \b, c\in SV$. Then
$$L_{p,(s,c,r,\b,q,a)}^{\mathcal L,\mathcal L}\subset L_{p,s,c,r,a \b}^{\mathcal L}\subset L_{p,s;a\b c},$$
$$L_{p,(s,c,r,\b,q,a)}^{\mathcal R,\mathcal R}\subset L_{p,s,c,r,a \b}^{\mathcal R}\subset L_{p,s;a\b c},$$
$$L_{p,(s,c,r,\b,q,a)}^{\mathcal L,\mathcal R}\subset L_{p,s;B},\ \mbox{where}\ B(t)=c(t)a(t)\big\|u^{-\frac{1}{r}}\b(u)\big\|_{r,(t,\infty)}$$
and 
$$L_{p,(s,c,r,\b,q,a)}^{\mathcal R,\mathcal L}\subset L_{p,s;B},\ \mbox{where}\ B(t)=c(t)a(t)\big\|u^{-\frac{1}{r}}\b(u)\big\|_{r,(0,t)}$$
In particular, 
$$L_\b^{(p,q,r}\subset L_{p,r;\b}\mand L_\b^{p),q,r}\subset L_{p,r;\b}.$$
Analogous inclusions hold if the $L$-spaces are replaced by $(L)$-spaces.
\end{lem}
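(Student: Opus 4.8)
\textbf{Proof strategy for Lemma \ref{lemma23}.}
The plan is to deduce all the stated inclusions from Lemma \ref{lemma3}, which tells us that a single outer ``evaluation at a point'' is dominated by an inner $L_q$-norm over a one-sided interval. The idea is that each of the left-hand sides is a three-fold iterated norm of the quantity $v^{1/p-1/q}a(v)f^*(v)$, and we can collapse the iteration one layer at a time. Concretely, first I would treat the $\mathcal{L},\mathcal{L}$ case: set $g(u)=\|v^{\frac1p-\frac1q}a(v)f^*(v)\|_{q,(0,u)}$, which is non-decreasing in $u$; but more usefully, apply Lemma \ref{lemma3} (first estimate, with $\lambda=\frac1p$, weight $a$, and the non-increasing function $f^*$) to get $v^{\frac1p}a(v)f^*(v)\lesssim g(v)$ pointwise, and then again with weight $\b$ to push the $u$-integration inside. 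Iterating once more with weight $c$ lands exactly on $\|t^{\frac1p-\frac1s}c(t)a(t)\b(t)f^*(t)\|$-type expressions; reading this together with Lemma \ref{lemma2}(i) (so that $c\,a\,\b\in SV$) and Lemma \ref{lemma2}(iii) to identify the double inner norm with a single weight gives the chain $L^{\mathcal L,\mathcal L}_{p,(s,c,r,\b,q,a)}\subset L^{\mathcal L}_{p,s,c,r,a\b}\subset L_{p,s;a\b c}$.

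For the intermediate inclusion $L^{\mathcal L}_{p,s,c,r,a\b}\subset L_{p,s;a\b c}$ and its $\mathcal{R}$ analogue, the point is simply that $\|v^{\frac1p-\frac1q}(a\b)(v)f^*(v)\|_{q,(0,t)}\gtrsim t^{\frac1p}(a\b)(t)f^*(t)$ by Lemma \ref{lemma3} again, so the outer norm $\|t^{-\frac1s}c(t)\,(\cdot)\|_{s,(0,\infty)}$ dominates $\|t^{\frac1p-\frac1s}c(t)(a\b)(t)f^*(t)\|_{s,(0,\infty)}=\|f\|_{p,s;a\b c}$. The $\mathcal{R},\mathcal{R}$ case is entirely symmetric, using instead the second estimate in Lemma \ref{lemma3} (the one with the interval $(\frac t2,\infty)$), together with the observation that replacing $(\frac t2,\infty)$ by $(t,\infty)$ costs only a constant by Lemma \ref{lemma2}(v), or by a harmless change of variables.

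The mixed cases $\mathcal{L},\mathcal{R}$ and $\mathcal{R},\mathcal{L}$ are where the weight $B$ in the statement appears, and this is the step to be most careful about. For $\mathcal{L},\mathcal{R}$: the inner double norm is $\big\|u^{-\frac1r}\b(u)\,\|v^{\frac1p-\frac1q}a(v)f^*(v)\|_{q,(t,u)}\big\|_{r,(t,\infty)}$; bounding the innermost $L_q$-norm over $(t,u)$ from below by its value ``near $u$'' via Lemma \ref{lemma3} gives $v^{\frac1p}a(v)f^*(v)$ evaluated around $v\approx u$, and then the outer $L_r$-integration over $(t,\infty)$ against $u^{-1/r}\b(u)$ produces, after peeling off $f^*(t)a(t)$ as a constant on $(t,\infty)$ (legitimate since both are monotone appropriately and we only need a lower bound at the left endpoint), precisely the factor $\|u^{-1/r}\b(u)\|_{r,(t,\infty)}$; multiplying by the remaining outer weight $t^{-1/s}c(t)$ and invoking that $B(t)=c(t)a(t)\|u^{-1/r}\b(u)\|_{r,(t,\infty)}\in SV$ (Lemma \ref{lemma2}(iv)) identifies the result as $\|f\|_{p,s;B}$. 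The $\mathcal{R},\mathcal{L}$ case is the mirror image, with $(0,t)$ in place of $(t,\infty)$ and $\|u^{-1/r}\b(u)\|_{r,(0,t)}$ in $B$. The main obstacle throughout is bookkeeping: making sure that at each collapse one extracts exactly the monotone factor $f^*$ (or a value of it at the correct endpoint) and that the leftover purely power-times-$SV$ weights combine, via Lemma \ref{lemma2}(iii)--(v), into a single $SV$ weight without stray factors of $2$ or sign errors in the exponents; the special cases $L^{(p,q,r}_\b\subset L_{p,r;\b}$ and $L^{p),q,r}_\b\subset L_{p,r;\b}$ then follow by taking $a=c=1$, $s=r$ in view of Remark \ref{remark21}, and the $(L)$-versions follow verbatim since $f^{**}$ is also non-negative and non-increasing.
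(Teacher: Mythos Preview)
Your overall strategy is exactly what the paper has in mind: the paper's ``proof'' is simply the sentence ``follows from Lemma~\ref{lemma3}; left to the reader'', and repeated use of Lemma~\ref{lemma3} to collapse the iterated norms one layer at a time is precisely the intended argument. Your treatment of the $\mathcal L,\mathcal L$ and $\mathcal R,\mathcal R$ chains, of the intermediate inclusions $L^{\mathcal L}\subset L_{p,s;abc}$, and of the $(L)$-versions (via monotonicity of $f^{**}$) is correct.

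There is, however, a slip in the $\mathcal L,\mathcal R$ step. You write that one bounds $\|v^{1/p-1/q}a(v)f^*(v)\|_{q,(t,u)}$ from below ``near $u$'' and then ``peels off $f^*(t)a(t)$ as a constant on $(t,\infty)$, legitimate since both are monotone''. These two moves are inconsistent, and the justification is wrong: $a\in SV$ need not be monotone, and $f^*(t)$ is an \emph{upper} bound for $f^*$ on $(t,\infty)$, not a lower one. Evaluating near $u$ gives $\gtrsim u^{1/p}a(u)f^*(u)$, and then the $u$-integral $\|u^{1/p-1/r}(a b)(u)f^*(u)\|_{r,(t,\infty)}$ does \emph{not} factor into $a(t)f^*(t)\,\|u^{-1/r}b(u)\|_{r,(t,\infty)}$. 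The correct collapse is to evaluate the inner $L_q$-norm near the \emph{fixed} endpoint $t$: for $u\ge 2t$ one has $(t,2t)\subset(t,u)$, so by the third estimate in Lemma~\ref{lemma3},
\[
\|v^{1/p-1/q}a(v)f^*(v)\|_{q,(t,u)}\ \ge\ \|v^{1/p-1/q}a(v)f^*(v)\|_{q,(t,2t)}\ \gtrsim\ t^{1/p}a(t)f^*(2t),
\]
which is constant in $u$. Integrating $u^{-1/r}b(u)$ over $(2t,\infty)$ and using Lemma~\ref{lemma2}(iv) to replace $(2t,\infty)$ by $(t,\infty)$ then produces exactly $t^{1/p}a(t)f^*(2t)\,\|u^{-1/r}b(u)\|_{r,(t,\infty)}$; multiplying by $t^{-1/s}c(t)$, taking the $L_s$-norm, and a change of variables $t\mapsto t/2$ gives $\|f\|_{p,s;B}$ with $B(t)=c(t)a(t)\|u^{-1/r}b(u)\|_{r,(t,\infty)}$. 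The $\mathcal R,\mathcal L$ case is the genuine mirror image: with inner interval $(u,t)$, $u<t$, one restricts to $(t/2,t)$ and again evaluates at the fixed endpoint $t$. With this correction your proof goes through.
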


\subsection{Spaces which quasi-norms are defined via $f^{**}$}

Peetre's formula \eqref{eK} allows us to characterize all $(L)$-spaces as interpolation spaces for the couple $(L_1,L_\infty)$ through the appropriate interpolation method. Indeed,

\begin{lem}\label{lemma24}
Let $1\leq p\leq \infty$, $0<q\leq \infty$, $a, \b\in SV$ and $\theta=1-\frac{1}{p}$. Then
$$L_{(p,q;a)}=(L_1,L_\infty)_{\theta,q;a}.$$
\end{lem}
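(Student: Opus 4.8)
The plan is to use Peetre's formula \eqref{eK}, which gives $K(t,f;L_1,L_\infty)=tf^{**}(t)$, so that
$$\|f\|_{\theta,q;a}=\big\|t^{-\theta-\frac{1}{q}}a(t)\,tf^{**}(t)\big\|_{q,(0,\infty)}=\big\|t^{1-\theta-\frac{1}{q}}a(t)f^{**}(t)\big\|_{q,(0,\infty)}.$$
Since $\theta=1-\frac{1}{p}$, the exponent $1-\theta=\frac{1}{p}$, and the right-hand side becomes exactly $\big\|t^{\frac{1}{p}-\frac{1}{q}}a(t)f^{**}(t)\big\|_{q,(0,\infty)}=\|f\|_{(p,q;a)}$. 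Thus the two quasi-norms coincide literally once we have identified the $K$-functional.

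The only genuine point requiring care is the underlying set of functions: $(L_1,L_\infty)_{\theta,q;a}$ consists by definition of $f\in L_1+L_\infty$, while $L_{(p,q;a)}$ is defined for all $f\in\mathcal{M}(\Omega,\mu)$. So I would first observe that whenever $\|f\|_{(p,q;a)}<\infty$ one automatically has $f\in L_1+L_\infty$: indeed $f^{**}$ is non-increasing, so by Lemma \ref{lemma3} (applied with $\lambda=\frac1p$, on a fixed bounded subinterval, using that $\|t^{-1/q}a(t)\|_{q,(0,1)}<\infty$ or the corresponding behaviour near a chosen point, which holds because $1\le p$ forces $\frac1p-\frac1q>-\frac1q$) the finiteness of the quasi-norm controls $f^{**}(t)$ pointwise for $t>0$; in particular $f^{**}(1)<\infty$, i.e. $\int_0^1 f^*<\infty$, which together with $f^*$ non-increasing and bounded on $[1,\infty)$ gives $f\in L_1+L_\infty$ (one writes $f=f\chi_{\{|f|>f^*(1)\}}+f\chi_{\{|f|\le f^*(1)\}}\in L_1+L_\infty$). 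Hence the domains agree and the quasi-norm identity above upgrades to the space identity $L_{(p,q;a)}=(L_1,L_\infty)_{\theta,q;a}$ with equality of quasi-norms.

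I do not expect any real obstacle here: the lemma is essentially a restatement of Peetre's formula combined with the choice $\theta=1-\frac1p$, and the membership check is routine once one notes $1\le p$ guarantees $\frac1p-\frac1q-(-\frac1q)=\frac1p>0$ so that the weight $u^{\frac1p-\frac1q}a(u)$ is not integrable-destroying at $0$ in the way that would decouple the quasi-norm from pointwise control of $f^{**}$. The mild nuisance is only bookkeeping about which interval one localizes to; I would state it briefly and refer to Lemma \ref{lemma3} and Lemma \ref{lemma2}(iii)--(iv) for the elementary estimates, leaving the reader to fill the one-line verification. No nontrivial inequality or Hardy-type argument is needed for this particular lemma — those enter only when passing from $(L)$-spaces to $L$-spaces later.
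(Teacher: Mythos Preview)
Your proposal is correct and matches the paper's approach: the paper states this lemma without proof, treating it as an immediate consequence of Peetre's formula \eqref{eK}, which is precisely your argument. Your extra care in checking that $\|f\|_{(p,q;a)}<\infty$ forces $f\in L_1+L_\infty$ goes beyond what the paper makes explicit, and the route via Lemma~\ref{lemma3} (giving $t^{1/p}a(t)f^{**}(t)\lesssim\|f\|_{(p,q;a)}$, hence $f^{**}(1)<\infty$) is sound---note, though, that the parenthetical about $\|t^{-1/q}a(t)\|_{q,(0,1)}<\infty$ is unnecessary, since Lemma~\ref{lemma3} holds for arbitrary $\lambda\in\R$.
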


\begin{lem}\label{lemma25}(See \cite[Lemmas 5.4, 5.9]{GOT_2005})
Let $1\leq p\leq \infty$, $0<q,r\leq \infty$, $a, \b\in SV$ and $\theta=1-\frac{1}{p}$. Then,
$$(L)_{p,r,\b,q,a}^{\mathcal L}=(L_1,L_\infty)^{\mathcal L}_{\theta,r,\b,q,a} \mand 
(L)_{p,r,\b,q,a}^{\mathcal R}=(L_1,L_\infty)^{\mathcal R}_{\theta,r,\b,q,a}.$$
In particular,
$$(L)^{(p,q,r}_{\b}=(L_1,L_\infty)^{\mathcal L}_{\theta,r,\b,q,1} \mand 
(L)^{p),q,r}_{\b}=(L_1,L_\infty)^{\mathcal R}_{\theta,r,\b,q,1}.$$
\end{lem}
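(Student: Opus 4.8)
The plan is to deduce both identities from Peetre's formula \eqref{eK}. Fix $\theta=1-\frac1p$. For every $f\in L_1+L_\infty$ and every $u>0$ one has $K(u,f;L_1,L_\infty)=uf^{**}(u)$, so that
$$u^{-\theta-\frac1q}a(u)\,K(u,f;L_1,L_\infty)=u^{-(1-\frac1p)-\frac1q}a(u)\,uf^{**}(u)=u^{\frac1p-\frac1q}a(u)f^{**}(u).$$
Inserting this identity into the definitions of $\|\cdot\|_{\mathcal L;\theta,r,\b,q,a}$ and $\|\cdot\|_{\mathcal R;\theta,r,\b,q,a}$ from Definition \ref{defLR} reproduces, term by term, the expressions defining $\|\cdot\|_{(L)^{\mathcal L}_{p,r,\b,q,a}}$ and $\|\cdot\|_{(L)^{\mathcal R}_{p,r,\b,q,a}}$ (the displays following \eqref{eLKL}). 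Hence on $L_1+L_\infty$ these quasi-norms are not merely equivalent but literally equal; this is the unweighted counterpart of the reasoning behind Lemma \ref{lemma24}.

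What still needs a line of care is the matching of the underlying sets: one must verify that $\|f\|_{(L)^{\mathcal L}_{p,r,\b,q,a}}<\infty$ (resp. $\|f\|_{(L)^{\mathcal R}_{p,r,\b,q,a}}<\infty$) already forces $f\in L_1+L_\infty$, the reverse inclusion being part of the definition of the interpolation space. Here I would use that $f\in L_1+L_\infty$ if and only if $f^{**}(u)<\infty$ for all $u>0$, equivalently (since $f^{**}$ is non-increasing) for $u$ near $0$. If $\|f\|_{(L)^{\mathcal L}_{p,r,\b,q,a}}<\infty$, the inner norm $\big\|v^{\frac1p-\frac1q}a(v)f^{**}(v)\big\|_{q,(0,u)}$ is finite for a.e.\ $u>0$, which forces $f^{**}$ to be finite a.e.\ and hence everywhere; the $\mathcal R$-case is identical, using $\big\|v^{\frac1p-\frac1q}a(v)f^{**}(v)\big\|_{q,(u,\infty)}<\infty$ for a.e.\ $u$ together with the standing assumption $\big\|t^{-\frac1r}\b(t)\big\|_{r,(0,1)}<\infty$.

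The ``in particular'' statements then follow by setting $a\equiv1$ and invoking Remark \ref{remark21}. I do not anticipate a genuine obstacle: the entire content is the substitution furnished by \eqref{eK}, and the only step that must actually be carried out is the set identification above (and the observation that the standing nontriviality hypotheses on $\b$ coincide with the ones guaranteeing $L_1\cap L_\infty\hookrightarrow(L_1,L_\infty)^{\mathcal L}_{\theta,r,\b,q,a}\hookrightarrow L_1+L_\infty$, and likewise for $\mathcal R$).
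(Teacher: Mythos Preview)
Your proposal is correct and follows exactly the approach the paper indicates: the paper does not give a detailed proof of this lemma, merely stating that ``Peetre's formula \eqref{eK} allows us to characterize all $(L)$-spaces as interpolation spaces for the couple $(L_1,L_\infty)$'' and referring to \cite[Lemmas 5.4, 5.9]{GOT_2005}. Your extra care about matching the underlying sets is a welcome addition (and can be streamlined by noting that $f^{**}$ is either everywhere finite or everywhere infinite, since $\int_0^{t_0}f^*<\infty$ for one $t_0$ forces $\int_0^t f^*<\infty$ for all $t>0$).
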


\begin{lem}\label{lemma26}
Let $1\leq p\leq \infty$, $0<q,r,s\leq \infty$, $a, \b,c\in SV$ and $\theta=1-\frac{1}{p}$. Then
$$(L)_{p,(s,c,r,\b,q,a)}^{\mathcal L,\mathcal L}=(L_1,L_\infty)^{\mathcal L,\mathcal L}_{\theta,s,c,r,\b,q,a} ,\qquad
(L)_{p,(s,c,r,\b,q,a)}^{\mathcal R,\mathcal R}=(L_1,L_\infty)^{\mathcal R,\mathcal R}_{\theta,s,c,r,\b,q,a},$$
$$(L)_{p,(s,c,r,\b,q,a)}^{\mathcal L,\mathcal L}=(L_1,L_\infty)^{\mathcal L,\mathcal L}_{\theta,s,c,r,\b,q,a} \mand 
(L)_{p,(s,c,r,\b,q,a)}^{\mathcal R,\mathcal R}=(L_1,L_\infty)^{\mathcal R,\mathcal R}_{\theta,s,c,r,\b,q,a}.$$
\end{lem}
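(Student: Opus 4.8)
The plan is to reduce Lemma \ref{lemma26} to the identity $K(v,f;L_1,L_\infty)=vf^{**}(v)$ from \eqref{eK} by straightforward substitution, exactly as Lemmas \ref{lemma24} and \ref{lemma25} must have been proved. Indeed, fix $1\le p\le\infty$, put $\theta=1-\frac1p$ so that $-\theta-\frac1q=\frac1p-1-\frac1q$, and recall from Definition \ref{defLRR^} that
$$
\|f\|_{\mathcal L,\mathcal L;\theta,s,c,r,\b,q,a}=\bigg\|t^{-\frac1s}c(t)\Big\|u^{-\frac1r}\b(u)\big\|v^{-\theta-\frac1q}a(v)K(v,f)\big\|_{q,(0,u)}\Big\|_{r,(0,t)}\bigg\|_{s,(0,\infty)}.
$$
Substituting $K(v,f;L_1,L_\infty)=vf^{**}(v)$ turns the innermost factor $v^{-\theta-\frac1q}a(v)K(v,f)$ into $v^{\frac1p-1-\frac1q}a(v)\,vf^{**}(v)=v^{\frac1p-\frac1q}a(v)f^{**}(v)$, which is precisely the integrand in the definition \eqref{ecL1} of $(L)^{\mathcal L,\mathcal L}_{p,(s,c,r,\b,q,a)}$. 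Hence $\|f\|_{\mathcal L,\mathcal L;\theta,s,c,r,\b,q,a}=\|f\|_{(L)^{\mathcal L,\mathcal L}_{p,(s,c,r,\b,q,a)}}$ for every $f$, and the two spaces coincide. The same computation applied to the $\mathcal R,\mathcal R$ quasi-norm (with inner interval $(u,\infty)$, outer interval $(t,\infty)$) gives $(L)^{\mathcal R,\mathcal R}_{p,(s,c,r,\b,q,a)}=(L_1,L_\infty)^{\mathcal R,\mathcal R}_{\theta,s,c,r,\b,q,a}$.

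A small wrinkle is that the statement as displayed also lists the $\mathcal L,\mathcal R$ and $\mathcal R,\mathcal L$ cases implicitly (the second line of the \texttt{align}-like display repeats $\mathcal L,\mathcal L$ and $\mathcal R,\mathcal R$, but the analogous identities
$$
(L)^{\mathcal L,\mathcal R}_{p,(s,c,r,\b,q,a)}=(L_1,L_\infty)^{\mathcal L,\mathcal R}_{\theta,s,c,r,\b,q,a},\qquad (L)^{\mathcal R,\mathcal L}_{p,(s,c,r,\b,q,a)}=(L_1,L_\infty)^{\mathcal R,\mathcal L}_{\theta,s,c,r,\b,q,a}
$$
hold for exactly the same reason). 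For these one only has to note that the inner Lebesgue interval $(t,u)$ or $(u,t)$ and the weight exponents are unaffected by the substitution $K(v,f)\leftrightarrow vf^{**}(v)$, so again the quasi-norms are literally equal, not merely equivalent.

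Thus no estimate is actually needed here: the lemma is a pure change of variables inside the quasi-norm, valid pointwise in $f$, and the only nontrivial input is Peetre's formula \eqref{eK}, which is quoted from \cite{BS_1988}. The one point requiring a line of care is that $\theta=1-\frac1p\in[0,1]$ and that the standing nontriviality assumptions on the weights (imposed just after Definition \ref{defLRR^}) are inherited on both sides, so that the identification is between genuine interpolation spaces and genuine function spaces rather than between two copies of $\{0\}$; this is immediate and I would dispatch it in one sentence. The proof is therefore short enough that, following the excerpt's stated policy for Lemmas \ref{lemma24}--\ref{lemma25} and Lemma \ref{lemma23}, it can reasonably be left to the reader or given in two lines.
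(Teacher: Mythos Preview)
Your proposal is correct and matches the paper's approach: the paper states Lemma~\ref{lemma26} without proof, in the same spirit as Lemmas~\ref{lemma24} and~\ref{lemma25}, since it follows immediately from Peetre's formula \eqref{eK} by the substitution $K(v,f;L_1,L_\infty)=vf^{**}(v)$ that you carry out. Your remark that the second displayed line appears to be a typographical repetition (and that the $\mathcal{L,R}$ and $\mathcal{R,L}$ identities hold by the same substitution) is also apt.
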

\section{Main Lemmas}

Recall that $(\Omega,\mu)$ denote a totally $\sigma$-finite measure space with a non-atomic measure $\mu$ and $\mathcal{M}(\Omega,\mu)$ is the set of all $\mu$-measurable functions on $\Omega$. 

Let $0<\varkappa<\infty$. Peetre's formula \eqref{eK} was been generalized by P. Kr\'ee \cite{K_1967} in the following sense. For  $f\in L_\varkappa+L_\infty$,
\begin{equation}\label{eKsigma}
K(t,f;L_\varkappa,L_\infty)\approx\Big(\int_0^{t^\varkappa} f^*(\tau)^\varkappa d\tau\Big)^{1/\varkappa}, \qquad t>0.
\end{equation}
If $f\in L_{\varkappa,\infty}+L_\infty$, then \cite{H_1970}
$$K(t,f;L_{\varkappa,\infty},L_\infty)\approx\sup_{0<\tau<t^\varkappa} \tau^{\frac{1}{\varkappa}} f^*(\tau), \qquad t>0.$$
Let (Cf. \cite{Y_1969,S_1972})
\begin{equation}\label{e7}
f^{**}_{(\varkappa)}(t):=\frac{1}{t}\Big(\int_0^{t^\varkappa} f^*(\tau)^\varkappa d\tau\Big)^{1/\varkappa}, \qquad t>0.
\end{equation}
Obviously $f^{**}_{(1)}=f^{**}$. Futhermore, $f^{**}_{(m)}$ satisfies the following properties:
\begin{lem}\label{lemf} (Cf. \cite[(R8)]{Y_1969}.)
\begin{itemize}
\item[(i)] The function $f^{**}_{(\varkappa)}$ (if exists) is non-increasing.
\vspace{2mm}
\item[(ii)] $f^{**}_{(\varkappa)}(t)\geq f^{*}(t^{\varkappa})$, for all $t>0$.
\vspace{2mm}
\item[(iii)] $(f+g)^{**}_{(\varkappa)}(t)\lesssim f^{**}_{(\varkappa)}(t)+g^{**}_{(\varkappa)}(t)$, for all $t>0$.
\end{itemize}
\end{lem}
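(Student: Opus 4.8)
The plan is to prove the three properties of $f^{**}_{(\varkappa)}$ essentially by reduction to the classical case $\varkappa=1$ via a change of variable, together with elementary monotonicity arguments and the triangle inequality in $L_\varkappa$.

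\medskip
\noindent\textbf{(i) Monotonicity.}
First I would observe that $g(s):=\big(\int_0^s f^*(\tau)^\varkappa\,d\tau\big)^{1/\varkappa}$ is a non-decreasing, concave-type primitive. The cleanest route is to write, for $t>0$,
$$
f^{**}_{(\varkappa)}(t)=\Big(\frac{1}{t^\varkappa}\int_0^{t^\varkappa} f^*(\tau)^\varkappa\,d\tau\Big)^{1/\varkappa}
=\big((f^*)^\varkappa\big)^{**}(t^\varkappa)^{1/\varkappa},
$$
so that $f^{**}_{(\varkappa)}(t)=\big(h^{**}(t^\varkappa)\big)^{1/\varkappa}$ with $h:=(f^*)^\varkappa$, which is itself a non-increasing function. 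Since the classical maximal function $h^{**}$ of a non-increasing function is non-increasing (see \cite{BS_1988}), and $s\mapsto s^\varkappa$ is increasing while $x\mapsto x^{1/\varkappa}$ is increasing, the composition $t\mapsto\big(h^{**}(t^\varkappa)\big)^{1/\varkappa}$ is non-increasing. This yields (i) immediately. (Alternatively one can differentiate: $\frac{d}{dt}\big(t^{\varkappa}f^{**}_{(\varkappa)}(t)^\varkappa\big)=\varkappa t^{\varkappa-1}f^*(t^\varkappa)^\varkappa\le \varkappa t^{\varkappa-1}f^{**}_{(\varkappa)}(t)^\varkappa$ by (ii), which forces $f^{**}_{(\varkappa)}$ to be non-increasing; but this uses (ii), so I prefer the reduction above.)

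\medskip
\noindent\textbf{(ii) Domination of $f^*$.}
This is just the statement that the average of a non-increasing function over $(0,t^\varkappa)$ is at least its value at the right endpoint, applied to $h=(f^*)^\varkappa$: since $h$ is non-increasing, $h^{**}(t^\varkappa)=\frac{1}{t^\varkappa}\int_0^{t^\varkappa}h(\tau)\,d\tau\ge h(t^\varkappa)=f^*(t^\varkappa)^\varkappa$. Taking $\varkappa$-th roots gives $f^{**}_{(\varkappa)}(t)\ge f^*(t^\varkappa)$.

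\medskip
\noindent\textbf{(iii) Quasi-subadditivity.}
Here I would invoke the standard fact $(f+g)^*(\tau)\le f^*(\tau/2)+g^*(\tau/2)$ for all $\tau>0$. Then, using the triangle inequality for the $L_\varkappa(0,t^\varkappa)$-norm (with constant $2^{(1/\varkappa-1)_+}$ if $\varkappa<1$, which is harmless since the statement only claims ``$\lesssim$'') and the substitution $\tau=2\sigma$,
$$
\Big(\int_0^{t^\varkappa}(f+g)^*(\tau)^\varkappa\,d\tau\Big)^{1/\varkappa}
\lesssim \Big(\int_0^{t^\varkappa} f^*(\tau/2)^\varkappa\,d\tau\Big)^{1/\varkappa}
+\Big(\int_0^{t^\varkappa} g^*(\tau/2)^\varkappa\,d\tau\Big)^{1/\varkappa}
\lesssim \Big(\int_0^{t^\varkappa} f^*(\tau)^\varkappa\,d\tau\Big)^{1/\varkappa}+\Big(\int_0^{t^\varkappa} g^*(\tau)^\varkappa\,d\tau\Big)^{1/\varkappa}.
$$
Dividing by $t$ gives $(f+g)^{**}_{(\varkappa)}(t)\lesssim f^{**}_{(\varkappa)}(t)+g^{**}_{(\varkappa)}(t)$.

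\medskip
The only genuinely delicate point is the $\varkappa<1$ regime in (iii), where $L_\varkappa$ is not normed and one must be careful that the quasi-triangle constant is absolute (it is, being $2^{1/\varkappa-1}$, independent of $f,g,t$); but since the claim is only up to an implied constant this causes no real trouble. Everything else is a routine change of variables reducing to the well-known $\varkappa=1$ properties of $f^{**}$ recorded in \cite{BS_1988}. I would therefore present (ii) first (it is used nowhere else and is immediate), then (i) via the reduction to $h^{**}$, then (iii) via the rearrangement inequality for sums.
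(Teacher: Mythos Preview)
Your proof is correct. Parts (i) and (ii) coincide with the paper's treatment: the paper simply refers to \cite[Proposition 3.2]{BS_1988} for (i) and for (ii) writes exactly the averaging argument you give.

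For (iii) you take a genuinely different route. The paper does not use the pointwise inequality $(f+g)^*(\tau)\le f^*(\tau/2)+g^*(\tau/2)$ at all; instead it invokes the equivalence \eqref{eKsigma}, $K(t,f;L_\varkappa,L_\infty)\approx t\,f^{**}_{(\varkappa)}(t)$, and then reads off quasi-subadditivity directly from the subadditivity of the $K$-functional:
\[
(f+g)^{**}_{(\varkappa)}(t)\approx t^{-1}K(t,f+g)\le t^{-1}\big(K(t,f)+K(t,g)\big)\approx f^{**}_{(\varkappa)}(t)+g^{**}_{(\varkappa)}(t).
\]
This is shorter and hides all constants in the $\approx$ coming from Kr\'ee's formula. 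Your argument, by contrast, is fully elementary and self-contained (no $K$-functional needed), and it makes the implied constant explicit, at the price of one extra step (the substitution $\tau\mapsto\tau/2$ and the quasi-triangle inequality in $L_\varkappa$). Either approach is perfectly adequate here.
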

\begin{proof}
The first assertion can be proved in the usual way. See for example \cite[Proposition 3.2]{BS_1988}.
In order to prove (ii) we use that $f^*$ is non-increasing to obtain 
$$f^{**}_{(\varkappa)}(t)\geq \frac{1}{t}f^*(t^\varkappa)\Big(\int_0^{t^\varkappa}  d\tau\Big)^{1/\varkappa}=f^*(t^\varkappa).$$
(iii) The equivalence \eqref{eKsigma} and the (quasi)-subadditivity of the $K$-functional, imply that
\begin{align*}
(f+g)^{**}_{(\varkappa)}(t)&\approx \frac{1}{t}K(t,f+g;L_\varkappa,L_\infty)\lesssim\frac{1}{t}K(t,f;L_\varkappa,L_\infty)+\frac{1}{t}K(t,g;L_\varkappa,L_\infty)\\
&\approx f^{**}_{(\varkappa)}(t)+g^{**}_{(\varkappa)}(t).
\end{align*}
This completes the proof of the lemma.
\end{proof}

The following result is a modification of \cite[Lemma 5.2]{GOT_2005} and \cite[Lemmas 8.2 and 8.3]{EO_2000} and can be proved similarly. In what follows we shall denote $\dddot{\b}(u)=\b(u^{\frac{1}{\varkappa}})$, $u>0$, $0<\varkappa<\infty$.

\begin{lem}\label{lemma15}
Let $\theta\in(0,1]$, $0<\varkappa<\infty$, $0<q\leq \infty$ and $\b\in SV$. Then, for all $f\in L_\varkappa+L_\infty$ and all $t>0$
\begin{align*}
\Big\|u^{-\theta-\frac{1}{q}}\b(u)K(u,f;L_{\varkappa},L_\infty)\Big\|_{q,(0,t)}&\approx\Big\|u^{1-\theta-\frac{1}{q}}\b(u)f^{**}_{(\varkappa)}(u)\Big\|_{q,(0,t)}\\&\approx \Big\|v^{\frac{1-\theta}{\varkappa}-\frac{1}{q}}\dddot{\b}(v)f^{*}(v)\Big\|_{q,(0,t^{\varkappa})},
\end{align*}
and for all $f\in L_{\varkappa,\infty}+L_\infty$ and all $t>0$
$$\Big\|u^{-\theta-\frac{1}{q}}\b(u)K(u,f;L_{\varkappa,\infty},L_\infty)\Big\|_{q,(0,t)}\approx \Big\|v^{\frac{1-\theta}{\varkappa}-\frac{1}{q}}\dddot{\b}(v)f^{*}(v)\Big\|_{q,(0,t^{\varkappa})}.$$
\end{lem}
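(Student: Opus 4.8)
The whole statement rests on two ingredients: Kr\'ee's formula \eqref{eKsigma} together with its $L_{\varkappa,\infty}$ analogue, and a change of variables converting integrals in $u$ into integrals in $v=u^\varkappa$. The plan is to establish the two chains of equivalences separately, starting from the $K$-functional side and moving towards the $f^*$ side.

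\textbf{Step 1 (rewrite $K$ in terms of $f^{**}_{(\varkappa)}$).} By \eqref{eKsigma} and definition \eqref{e7} we have $K(u,f;L_\varkappa,L_\infty)\approx u\,f^{**}_{(\varkappa)}(u)$ for all $u>0$, with constants independent of $u$ and $f$. Substituting this into the left-hand $L_q$-norm immediately gives
$$\Big\|u^{-\theta-\frac{1}{q}}\b(u)K(u,f;L_\varkappa,L_\infty)\Big\|_{q,(0,t)}\approx\Big\|u^{1-\theta-\frac{1}{q}}\b(u)f^{**}_{(\varkappa)}(u)\Big\|_{q,(0,t)},$$
which is the first equivalence. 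Note this uses only \eqref{eKsigma}, not monotonicity. For the $L_{\varkappa,\infty}$ case one argues the same way, now with $K(u,f;L_{\varkappa,\infty},L_\infty)\approx\sup_{0<\tau<u^\varkappa}\tau^{1/\varkappa}f^*(\tau)$, so it suffices to prove $\big\|u^{-\theta-\frac1q}\b(u)\sup_{0<\tau<u^\varkappa}\tau^{1/\varkappa}f^*(\tau)\big\|_{q,(0,t)}\approx\big\|v^{\frac{1-\theta}{\varkappa}-\frac1q}\dddot\b(v)f^*(v)\big\|_{q,(0,t^\varkappa)}$; this will follow by the same change of variables plus the elementary fact that $\sup_{0<v<w}v^{1/\varkappa}f^*(v)\approx w^{1/\varkappa}f^*(w)$ modulo $SV$-weights (the ``$\gtrsim$'' direction is trivial and ``$\lesssim$'' uses Lemma \ref{lemma3} applied to $v\mapsto v^{1/\varkappa}f^*(v)$, which is not monotone, so one splits $(0,w)$ dyadically and uses that $\b\in SV$; alternatively cite \cite[Lemma 5.2]{GOT_2005} directly as the statement says).

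\textbf{Step 2 (change of variables).} Put $v=u^\varkappa$, so $u=v^{1/\varkappa}$, $du=\frac1\varkappa v^{1/\varkappa-1}dv$, and $\frac{du}{u}=\frac1\varkappa\frac{dv}{v}$. For $q<\infty$ one computes, using $\b(u)=\b(v^{1/\varkappa})=\dddot\b(v)$,
$$\Big\|u^{1-\theta-\frac1q}\b(u)f^{**}_{(\varkappa)}(u)\Big\|_{q,(0,t)}^q=\int_0^t u^{q(1-\theta)}\b(u)^q f^{**}_{(\varkappa)}(u)^q\,\frac{du}{u}=\frac1\varkappa\int_0^{t^\varkappa} v^{\frac{q(1-\theta)}{\varkappa}}\dddot\b(v)^q f^{**}_{(\varkappa)}(v^{1/\varkappa})^q\,\frac{dv}{v},$$
and since $f^{**}_{(\varkappa)}(v^{1/\varkappa})=v^{-1/\varkappa}\big(\int_0^v f^*(\tau)^\varkappa d\tau\big)^{1/\varkappa}$ by \eqref{e7}, this equals $\varkappa^{-1}\big\|v^{\frac{1-\theta}{\varkappa}-\frac1q}\dddot\b(v)\,v^{-1/\varkappa}\big(\int_0^v f^*(\tau)^\varkappa d\tau\big)^{1/\varkappa}\big\|_{q,(0,t^\varkappa)}^q$. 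Thus the second claimed equivalence is equivalent to
$$\Big\|v^{\frac{1-\theta}{\varkappa}-\frac1q}\dddot\b(v)\,v^{-1/\varkappa}\Big(\int_0^v f^*(\tau)^\varkappa d\tau\Big)^{1/\varkappa}\Big\|_{q,(0,t^\varkappa)}\approx\Big\|v^{\frac{1-\theta}{\varkappa}-\frac1q}\dddot\b(v)f^*(v)\Big\|_{q,(0,t^\varkappa)};$$
the case $q=\infty$ is the same with sup replacing the integral. The inequality ``$\gtrsim$'' is trivial because $\int_0^v f^*(\tau)^\varkappa d\tau\ge v f^*(v)^\varkappa$ (monotonicity of $f^*$). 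The reverse ``$\lesssim$'' is a Hardy-type inequality: setting $g=(f^*)^\varkappa$ (non-increasing) and $w=v^\varkappa$ one reduces it, after a further substitution, to an inequality of the form $\big\|w^{\beta-\frac1{q'}}\widetilde\b(w)\frac1w\int_0^w g(\sigma)d\sigma\big\|_{q',(0,\cdot)}\lesssim\big\|w^{\beta-\frac1{q'}}\widetilde\b(w)g(w)\big\|_{q',(0,\cdot)}$ with the exponent of $w$ strictly negative (this is where $\theta>0$, hence $\tfrac{1-\theta}{\varkappa}-1<0$, enters), which is exactly covered by Lemma \ref{teHardy1} when $q'\ge1$ and by Lemma \ref{lemma5}\,i) (the non-increasing case) when $q'<1$; the endpoint $q'=\infty$ is immediate from $\frac1w\int_0^w g\le g(0^+)$... more precisely one uses $SV$ absorption exactly as in Lemma \ref{lemma3}.

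\textbf{Main obstacle.} The routine parts are the changes of variables in Steps 1--2; the real work is the direction ``$\lesssim$'' in Step 2, i.e.\ controlling $v^{-1/\varkappa}\big(\int_0^v (f^*)^\varkappa\big)^{1/\varkappa}$ by $f^*(v)$ inside the weighted $L_q$-norm. Raising to the power $\varkappa$ does not commute with the outer $L_q$-norm unless one is careful, so the cleanest route is to keep the $\varkappa$-th power inside: the quantity $v^{-1}\int_0^v(f^*)^\varkappa$ is the $K$-functional-type average of the non-increasing function $(f^*)^\varkappa$, and one applies the Hardy inequality (Lemma \ref{teHardy1}/Lemma \ref{lemma5}) to $(f^*)^\varkappa$ with parameter $q/\varkappa$ and weight exponent $\tfrac{q(1-\theta)}{\varkappa}-1<0$, then takes $\varkappa$-th roots. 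Since for $0<q/\varkappa<1$ one needs the non-increasing version, this is precisely why Lemma \ref{lemma5}\,i) (as opposed to merely Lemma \ref{teHardy1}) is invoked, and why the hypothesis $\theta\in(0,1]$ is essential — it guarantees strict negativity of the relevant exponent so that the Hardy inequality applies. The $L_{\varkappa,\infty}$ statement is analogous but easier, as no integral Hardy inequality is needed, only the sup-version absorption of $SV$ weights.
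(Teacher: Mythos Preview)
Your approach for the $L_\varkappa$ part is correct and is exactly the route the paper takes (implicitly here, and explicitly in the proof of Lemma~\ref{lemma17}): raise to the $\varkappa$-th power so that the outer norm becomes $L_{q/\varkappa}$, then apply Lemma~\ref{lemma5} with $T=0$, $\alpha=-\theta<0$, and the non-increasing function $(f^*)^\varkappa$. This is precisely \eqref{ec89} specialised to $T=0$, and your identification of why $\theta>0$ is needed is spot on.

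The $L_{\varkappa,\infty}$ part, however, has a gap in the ``$\lesssim$'' direction. Your suggestion to invoke Lemma~\ref{lemma3} on $v\mapsto v^{1/\varkappa}f^*(v)$ does not work as stated, since that function need not be monotone, and the vague ``split $(0,w)$ dyadically'' does not by itself produce the required bound (the sup over $(0,w)$ can be realised arbitrarily far from $w$, and $SV$-absorption alone does not collapse it). The clean fix---and the one the paper uses, see \eqref{ec99}---is the pointwise inequality
\[
\sup_{0<\tau<u^\varkappa}\tau^{1/\varkappa}f^*(\tau)\le\Big(\int_0^{u^\varkappa}f^*(\tau)^\varkappa\,d\tau\Big)^{1/\varkappa},
\]
which reduces the $L_{\varkappa,\infty}$ upper bound to the $L_\varkappa$ case you have already handled. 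With this one line inserted, your argument is complete and coincides with the paper's.
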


In the next two lemmas we generalize, in some sense, Lemma \ref{lemma15}. Indeed, we estimate, from above and below, the quasi-norm in $L_q(T,S)$ of the functions $$u^{-\theta-1/q}\b(u)K(u,f;L_{\varkappa},L_\infty)\quad \mbox{and}\quad u^{-\theta-1/q}\b(u)K(u,f;L_{\varkappa,\infty},L_\infty),$$ for all $0\leq T<S\leq\infty$. 
\begin{lem}\label{lemma16}
Let $\theta\in(0,1]$, $0<\varkappa<\infty$, $0<q\leq \infty$ and $\b\in SV$. Then, for all $f\in L_\varkappa+L_\infty$ and all $0\leq T<S\leq\infty$,
$$\Big\|u^{-\theta-\frac{1}{q}}\b(u)K(u,f;L_{\varkappa},L_\infty)\Big\|_{q,(T,S)}\gtrsim\Big\|v^{\frac{1-\theta}{\varkappa}-\frac{1}{q}}\dddot{\b}(v)f^{*}(v)\Big\|_{q,(T^{\varkappa},S^{\varkappa})},$$
and for all $f\in L_{\varkappa,\infty}+L_\infty$ and all $0\leq T<S\leq\infty$
\begin{equation}\label{e43}
\Big\|u^{-\theta-\frac{1}{q}}\b(u)K(u,f;L_{\varkappa,\infty},L_\infty)\Big\|_{q,(T,S)}\gtrsim\Big\|v^{\frac{1-\theta}{\varkappa}-\frac{1}{q}}\dddot{\b}(v)f^{*}(v)\Big\|_{q,(T^{\varkappa},S^{\varkappa})}.
\end{equation}
\end{lem}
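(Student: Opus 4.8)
The plan is to prove both estimates in Lemma~\ref{lemma16} by reducing the lower bound on $L_q(T,S)$ to a pointwise lower bound of $K(u,f;L_\varkappa,L_\infty)$ (resp.\ $K(u,f;L_{\varkappa,\infty},L_\infty)$) in terms of $f^*$, and then invoking the basic $SV$-estimates from Lemma~\ref{lemma2}. By Kr\'ee's formula \eqref{eKsigma} we have $K(u,f;L_\varkappa,L_\infty)\approx\big(\int_0^{u^\varkappa}f^*(\tau)^\varkappa\,d\tau\big)^{1/\varkappa}=uf^{**}_{(\varkappa)}(u)$, and by Lemma~\ref{lemf}(ii), $uf^{**}_{(\varkappa)}(u)\geq uf^*(u^\varkappa)$. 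Hence
$$\Big\|u^{-\theta-\frac1q}\b(u)K(u,f;L_\varkappa,L_\infty)\Big\|_{q,(T,S)}\gtrsim\Big\|u^{1-\theta-\frac1q}\b(u)f^*(u^\varkappa)\Big\|_{q,(T,S)}.$$
The same chain works verbatim for $L_{\varkappa,\infty}$: here $K(u,f;L_{\varkappa,\infty},L_\infty)\approx\sup_{0<\tau<u^\varkappa}\tau^{1/\varkappa}f^*(\tau)\geq u^{1/\varkappa}\cdot\tau^{-1/\varkappa}\big|_{\tau=u^\varkappa}\cdot u f^*(u^\varkappa)$; more simply, taking $\tau=u^\varkappa$ in the supremum gives $K(u,f;L_{\varkappa,\infty},L_\infty)\gtrsim u f^*(u^\varkappa)$, which is exactly what we need, so both displays in the statement follow from the single scalar estimate above.

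The remaining step is the change of variables $v=u^\varkappa$ (so $u=v^{1/\varkappa}$, $du/u=\frac1\varkappa\,dv/v$) in the $L_q$ quasi-norm. For $0<q<\infty$,
$$\Big\|u^{1-\theta-\frac1q}\b(u)f^*(u^\varkappa)\Big\|_{q,(T,S)}^q=\int_T^S u^{q(1-\theta)}\b(u)^q f^*(u^\varkappa)^q\,\frac{du}{u}=\frac1\varkappa\int_{T^\varkappa}^{S^\varkappa} v^{\frac{q(1-\theta)}{\varkappa}}\dddot{\b}(v)^q f^*(v)^q\,\frac{dv}{v},$$
which is $\frac1\varkappa\big\|v^{\frac{1-\theta}{\varkappa}-\frac1q}\dddot{\b}(v)f^*(v)\big\|_{q,(T^\varkappa,S^\varkappa)}^q$; the constant $\varkappa^{-1/q}$ is harmless. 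For $q=\infty$ the substitution is immediate since $\sup$ is invariant under the reparametrisation. This gives the claimed $\gtrsim$ in both cases.

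The only genuine point requiring care — rather than a true obstacle — is justifying the substitution $\dddot{\b}(v)=\b(v^{1/\varkappa})$ is legitimate inside the $SV$ framework, i.e.\ that $\dddot{\b}\in SV$, which is covered by Lemma~\ref{lemma2}(i) (composition with a power), so that statements involving $\dddot{\b}$ are meaningful; but for Lemma~\ref{lemma16} itself we do not even need that, only the bare computation. I expect the write-up to be short: state Kr\'ee's/Holmstedt's formula, apply Lemma~\ref{lemf}(ii) (resp.\ pick $\tau=u^\varkappa$ in the sup), then perform the substitution. One should note that the endpoints $T=0$ and $S=\infty$ are allowed throughout since the change of variables $v=u^\varkappa$ maps $(0,\infty)$ onto $(0,\infty)$ and is monotone, so no extra hypotheses are needed and the proof concludes, as the author says, ``similarly'' to \cite[Lemma 5.2]{GOT_2005}.
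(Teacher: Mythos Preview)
Your proof is correct and follows essentially the same route as the paper: apply Kr\'ee's formula \eqref{eKsigma} together with Lemma~\ref{lemf}(ii) (respectively, evaluate the supremum in Holmstedt's formula at $\tau=u^\varkappa$) to get the pointwise bound $K(u,f)\gtrsim u f^*(u^\varkappa)$, and then perform the change of variables $v=u^\varkappa$. The extra remarks on $\dddot{\b}\in SV$ and the explicit handling of $q=\infty$ are fine but not needed here.
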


\begin{proof}
Let $f\in L_\varkappa+L_\infty$. By \eqref{eKsigma} and Lemma \ref{lemf} (ii) we have $K(u,f;L_\varkappa,L_\infty)\approx uf^{**}_{(\varkappa)}(u)\geq uf^*(u^\varkappa)$, $u>0$. Hence, using the change of variables $v=u^\varkappa$, it follows
\begin{align*}
\Big\|u^{-\theta-\frac{1}{q}}\b(u)K(u,f;L_{\varkappa},L_\infty)\Big\|_{q,(T,S)}&\gtrsim\Big\|u^{1-\theta-\frac{1}{q}}\b(u)f^{*}(u^\varkappa)\Big\|_{q,(T,S)}\\&\approx\Big\|v^{\frac{1-\theta}{\varkappa}-\frac{1}{q}}\dddot{\b}(v)f^{*}(v)\Big\|_{q,(T^{\varkappa},S^{\varkappa})}.
\end{align*}
Estimate \eqref{e43} can be proved similarly using that $$K(u,f;L_{\varkappa,\infty},L_\infty)\approx\sup_{0<\tau<u^\varkappa} \tau^{\frac{1}{\varkappa}} f^*(\tau)\geq uf^*(u^\varkappa).$$
The proof is completed.
\end{proof}

\begin{lem}\label{lemma17}
Let $\theta\in(0,1]$, $0<\varkappa<\infty$, $0<q\leq \infty$ and $\b\in SV$. Then, for all $f\in L_\varkappa+L_\infty$ and all $0\leq T<S\leq \infty$,
$$
\Big\|u^{-\theta-\frac{1}{q}}\b(u)K(u,f;L_{\varkappa},L_\infty)\Big\|_{q,(T,S)}\lesssim T^{1-\theta}\b(T)f^{**}_{(\varkappa)}(T)+\Big\|v^{\frac{1-\theta}{\varkappa}-\frac{1}{q}}\dddot{\b}(v)f^{*}(v)\Big\|_{q,(T^\varkappa,S^{\varkappa})},$$
and, for all $f\in L_{\varkappa,\infty}+L_\infty$ and all $0\leq T<S\leq\infty$,
\begin{align}
\Big\|u^{-\theta-\frac{1}{q}}\b(u)K(u,f;L_{\varkappa,\infty},L_\infty)\Big\|_{q,(T,S)}&\lesssim T^{1-\theta}\b(T)f^{**}_{(\varkappa)}(T)\nonumber\\ &+\Big\|v^{\frac{1-\theta}{\varkappa}-\frac{1}{q}}\dddot{\b}(v)f^{*}(v)\Big\|_{q,(T^\varkappa,S^{\varkappa})}.\label{ecP}
\end{align}
\end{lem}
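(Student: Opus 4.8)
The plan is to reduce Lemma \ref{lemma17} to the Hardy-type estimate of Lemma \ref{lemma5} via Kr\'ee's formula \eqref{eKsigma}. First I would treat the couple $(L_\varkappa,L_\infty)$. By \eqref{eKsigma} we have $K(u,f;L_\varkappa,L_\infty)\approx u f^{**}_{(\varkappa)}(u)$, and unravelling the definition \eqref{e7}, $u f^{**}_{(\varkappa)}(u)=\big(\int_0^{u^\varkappa} f^*(\tau)^\varkappa\,d\tau\big)^{1/\varkappa}$. Hence, with the change of variable $v=u^\varkappa$ (so $du = \varkappa^{-1} v^{1/\varkappa-1}\,dv$) and writing $g=(f^*)^\varkappa$, which is non-negative and non-increasing,
\begin{align*}
\Big\|u^{-\theta-\frac{1}{q}}\b(u)K(u,f;L_{\varkappa},L_\infty)\Big\|_{q,(T,S)}
&\approx\Big\|u^{-\theta-\frac{1}{q}}\b(u)\Big(\int_0^{u^\varkappa} g(\tau)\,d\tau\Big)^{1/\varkappa}\Big\|_{q,(T,S)}\\
&\approx\Big\|v^{\frac{-\theta}{\varkappa}-\frac{1}{q}}\dddot{\b}(v)\Big(\int_0^{v} g(\tau)\,d\tau\Big)^{1/\varkappa}\Big\|_{q,(T^\varkappa,S^\varkappa)}.
\end{align*}
Raising to the power $\varkappa$ (which is harmless since it only rescales the $L_q$-exponent, turning an $L_q$-quasinorm into an $L_{q/\varkappa}$-quasinorm of the $\varkappa$-th power), the right-hand side becomes a quantity of the form $\big\|w^{\alpha-1/\tilde q}\,\dddot{\b}^{\varkappa}(w)\int_0^w g\big\|_{\tilde q,(T^\varkappa,S^\varkappa)}^{1/\varkappa}$ with $\alpha = -\theta/\varkappa<0$ (here $\theta\in(0,1]$ is used) and $\tilde q = q/\varkappa$; note $\dddot{\b}^{\varkappa}\in SV$ by Lemma \ref{lemma2}(i).

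Now I apply Lemma \ref{lemma5} with this $\alpha<0$, the slowly varying weight $\dddot{\b}^\varkappa$, the non-increasing function $g$, and endpoints $T^\varkappa<S^\varkappa$. It gives
\[
\Big\|w^{\alpha-\frac1{\tilde q}}\dddot\b^\varkappa(w)\int_0^w g\Big\|_{\tilde q,(T^\varkappa,S^\varkappa)}
\lesssim (T^\varkappa)^{\alpha}\dddot\b^\varkappa(T^\varkappa)\!\int_0^{T^\varkappa}\!\! g
+\Big\|w^{\alpha+1-\frac1{\tilde q}}\dddot\b^\varkappa(w)g(w)\Big\|_{\tilde q,(T^\varkappa,S^\varkappa)}.
\]
Taking $\varkappa$-th roots (quasi-subadditivity of $x\mapsto x^{1/\varkappa}$ costs only a constant), the first summand is $\approx T^{-\theta}\b(T)\big(\int_0^{T^\varkappa} (f^*)^\varkappa\big)^{1/\varkappa}=T^{-\theta}\b(T)\cdot T f^{**}_{(\varkappa)}(T)=T^{1-\theta}\b(T)f^{**}_{(\varkappa)}(T)$, exactly the boundary term claimed. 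For the second summand, undoing the substitution $w=v^{\varkappa}$ and using $g^{1/\varkappa}=f^*$ together with the identity $(\alpha+1)/\varkappa$-bookkeeping (the exponent $w^{\alpha+1}$ on $g$ corresponds, after the root and back-substitution, to $v^{(1-\theta)/\varkappa}$ on $f^*$), one recovers $\big\|v^{\frac{1-\theta}{\varkappa}-\frac1q}\dddot\b(v)f^*(v)\big\|_{q,(T^\varkappa,S^\varkappa)}$. This proves the first inequality. Note that Lemma \ref{lemma5}(i) already covers the delicate range $0<q/\varkappa<1$ (using that $g$ is non-increasing), and part (ii) the rest, so no case is lost.

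For the second inequality, involving $(L_{\varkappa,\infty},L_\infty)$, the same scheme applies after observing that Holmstedt-type formula $K(u,f;L_{\varkappa,\infty},L_\infty)\approx\sup_{0<\tau<u^\varkappa}\tau^{1/\varkappa}f^*(\tau)$ is pointwise dominated by $u f^{**}_{(\varkappa)}(u)\approx K(u,f;L_\varkappa,L_\infty)$ (since the sup of $\tau^{1/\varkappa}f^*(\tau)$ is at most $\big(\int_0^{u^\varkappa}(f^*)^\varkappa\big)^{1/\varkappa}$). Hence the left-hand side of \eqref{ecP} is bounded by the left-hand side of the first inequality, and the estimate just proved finishes the job — the boundary term and the $f^*$-term are literally the same. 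The main obstacle I anticipate is purely bookkeeping: carrying the exponent $-\theta-1/q$ correctly through the substitution $v=u^\varkappa$, the passage to the $L_{q/\varkappa}$ scale, the application of Lemma \ref{lemma5}, and back, so that the final exponent on $f^*$ lands on $\frac{1-\theta}{\varkappa}-\frac1q$; there is no analytic difficulty beyond this, and the case $q=\infty$ is handled by the obvious limiting interpretation of all the quasinorms.
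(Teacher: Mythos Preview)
Your approach is essentially identical to the paper's own proof: Kr\'ee's formula \eqref{eKsigma}, pass to the $\varkappa$-th power (so the $L_q$-norm becomes an $L_{q/\varkappa}$-norm), change variables $v=u^\varkappa$, apply Lemma~\ref{lemma5} to the non-increasing function $g=(f^*)^\varkappa$ with slowly varying weight $\dddot\b^{\,\varkappa}$, then take $\varkappa$-th roots; for $(L_{\varkappa,\infty},L_\infty)$ use the pointwise domination $\sup_{0<\tau<u^\varkappa}\tau^{1/\varkappa}f^*(\tau)\le\big(\int_0^{u^\varkappa}(f^*)^\varkappa\big)^{1/\varkappa}$ (the paper's \eqref{ec99}) to reduce to the first case. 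One bookkeeping correction: after raising to the power $\varkappa$ the exponent is $\alpha=-\theta$, not $-\theta/\varkappa$, since $v^{(-\theta/\varkappa-1/q)\varkappa}=v^{-\theta-\varkappa/q}=v^{\alpha-1/\tilde q}$ with $\tilde q=q/\varkappa$; this is exactly what makes $(\alpha+1)/\varkappa=\tfrac{1-\theta}{\varkappa}$ so that the second summand lands on $v^{\frac{1-\theta}{\varkappa}-\frac1q}\dddot\b(v)f^*(v)$ as you claim.
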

\begin{proof}
Let $f\in L_{\varkappa}+L_\infty$. By \eqref{eKsigma} it holds
$$
\Big\|u^{-\theta-\frac{1}{q}}\b(u)K(u,f;L_{\varkappa},L_\infty)\Big\|_{q,(T,S)}\approx 
\bigg\|u^{-\theta-\frac{1}{q}}\b(u)\Big(\int_0^{u^\varkappa} f^*(\tau)^\varkappa d\tau\Big)^{1/\varkappa}\bigg\|_{q,(T,S)}.$$
For the last term, the suitable change of variables, Lemma \ref{lemma5} and \eqref{e7} yield that
\begin{align}
\bigg\|u^{-\theta-\frac{1}{q}}&\b(u)\Big(\int_0^{u^\varkappa} f^*(\tau)^\varkappa d\tau\Big)^{1/\varkappa}\bigg\|_{q,(T,S)}\nonumber\\
&
=\bigg\|u^{-\theta\varkappa-\frac{\varkappa}{q}}\b(u)^{\varkappa}\int_0^{u^\varkappa} f^*(\tau)^\varkappa d\tau\bigg\|_{q/\varkappa,(T,S)}^{1/\varkappa}\nonumber\\
&\approx\bigg\|v^{-\theta-\frac{\varkappa}{q}}\dddot{\b}(v)^{\varkappa}\int_0^{v} f^*(\tau)^\varkappa d\tau\bigg\|_{q/\varkappa,(T^\varkappa,S^\varkappa)}^{1/\varkappa}\nonumber\\
&\lesssim 
\bigg(T^{-\theta\varkappa}\b(T)^\varkappa\int_0^{T^\varkappa}f^{*}(\tau)^\varkappa\, d\tau+\Big\|v^{1-\theta-\frac{\varkappa}{q}}\dddot{\b}(v)^\varkappa f^{*}(v)^\varkappa\Big\|_{q/\varkappa,(T^\varkappa,S^{\varkappa})}\bigg)^{1/\varkappa}\nonumber\\
&\approx 
T^{-\theta}\b(T)\bigg(\int_0^{T^\varkappa}f^{*}(\tau)^\varkappa\, d\tau\bigg)^{1/\varkappa}+\Big\|v^{1-\theta-\frac{\varkappa}{q}}\dddot{\b}(v)^\varkappa f^{*}(v)^\varkappa\Big\|_{q/\varkappa,(T^\varkappa,S^{\varkappa})}^{1/\varkappa}\nonumber\\
&=T^{1-\theta}\b(T)f^{**}_{(\varkappa)}(T)+\Big\|v^{\frac{1-\theta}{\varkappa}-\frac{1}{q}}\dddot{\b}(v)f^{*}(v)\Big\|_{q,(T^\varkappa,S^{\varkappa})}.\label{ec89}
\end{align}
This concludes the first part of the proof.

Let $f\in L_{\varkappa,\infty}+L_\infty$ and $u>0$. Since
\begin{align}
K(u,f;L_{\varkappa,\infty},L_\infty)&\approx \sup_{0<z<u^m}z^{\frac{1}{m}}f^*(z) \approx
\sup_{0<z<u^\varkappa} f^*(z)\Big(\int_0^z d\tau\Big)^{1/\varkappa}\nonumber\\&\leq \sup_{0<z<u^\varkappa}\Big(\int_0^z f^*(\tau)^\varkappa d\tau\Big)^{1/\varkappa}=\Big(\int_0^{u^\varkappa} f^*(\tau)^\varkappa d\tau\Big)^{1/\varkappa}\label{ec99}
\end{align}
we have that 
$$\Big\|u^{-\theta-\frac{1}{q}}\b(u)K(u,f;L_{\varkappa,\infty},L_\infty)\Big\|_{q,(T,S)}\lesssim \bigg\|u^{-\theta-\frac{1}{q}}\b(u)\Big(\int_0^{u^\varkappa} f^*(\tau)^\varkappa d\tau\Big)^{1/\varkappa}\bigg\|_{q,(T,S)}.$$
Now, using \eqref{ec89} we obtain \eqref{ecP}.

\end{proof}

\section{Interpolation formulae for the couples $(L_\varkappa,L_\infty)$ and $(L_{\varkappa,\infty},L_\infty)$.}\label{sectionsigma}

As above, we will denote $\dddot{\b}(t)=\b(t^{\frac{1}{\varkappa}})$ for all $t>0$, $0<\varkappa<\infty$. The following theorem improves Corollary 5.3 from \cite{GOT_2005} and can be proved analogously using Lemma \ref{lemma15}.

\begin{thm}\label{corollary27}
Let $0<\theta\leq1$, $0<\varkappa<\infty$, $p=\frac{\varkappa}{1-\theta}$, $0<q\leq \infty$ and $\b\in SV$. Then,
$$ (L_\varkappa,L_\infty)_{\theta,q;\b}=(L_{\varkappa,\infty},L_\infty)_{\theta,q;\b}=L_{p,q;\dddot{\b}}.$$
\end{thm}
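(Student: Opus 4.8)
The plan is to prove the two equalities $(L_\varkappa,L_\infty)_{\theta,q;\b}=L_{p,q;\dddot{\b}}$ and $(L_{\varkappa,\infty},L_\infty)_{\theta,q;\b}=L_{p,q;\dddot{\b}}$ directly by comparing the defining quasi-norms on $(0,\infty)$, using Lemma \ref{lemma15} with $T=0$ and $S=\infty$ as the main engine. Recall that $p=\frac{\varkappa}{1-\theta}$ means $\frac{1-\theta}{\varkappa}=\frac1p$, so the target exponent matches exactly.

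First I would write out, for $f\in L_\varkappa+L_\infty$,
\[
\|f\|_{\theta,q;\b}=\Big\|u^{-\theta-\frac1q}\b(u)K(u,f;L_\varkappa,L_\infty)\Big\|_{q,(0,\infty)}.
\]
By Lemma \ref{lemma15} applied with $t=\infty$ (the endpoint case of the stated equivalence, which holds by letting $t\to\infty$ in the $L_q(0,t)$-norms, since these are monotone in $t$ and the equivalence constants are uniform), the right-hand side is equivalent to
\[
\Big\|v^{\frac{1-\theta}{\varkappa}-\frac1q}\dddot{\b}(v)f^*(v)\Big\|_{q,(0,\infty)}=\Big\|v^{\frac1p-\frac1q}\dddot{\b}(v)f^*(v)\Big\|_{q,(0,\infty)}=\|f\|_{p,q;\dddot{\b}},
\]
using that $\dddot{\b}\in SV$ by Lemma \ref{lemma2}(i) (it is $\b$ composed with the power $v\mapsto v^{1/\varkappa}$). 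This gives the first identity, including that the two spaces coincide as sets with equivalent quasi-norms. For the second identity, the same Lemma \ref{lemma15} gives the $L_{\varkappa,\infty}$-version: $\|u^{-\theta-1/q}\b(u)K(u,f;L_{\varkappa,\infty},L_\infty)\|_{q,(0,t)}\approx\|v^{\frac{1-\theta}{\varkappa}-\frac1q}\dddot{\b}(v)f^*(v)\|_{q,(0,t^\varkappa)}$, and letting $t\to\infty$ yields $(L_{\varkappa,\infty},L_\infty)_{\theta,q;\b}=L_{p,q;\dddot\b}$ by the identical argument. Hence all three spaces are equal with equivalent quasi-norms.

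A small technical point to address carefully is the passage from the finite-$t$ statement of Lemma \ref{lemma15} to the $t=\infty$ statement. One way is to note that both sides, as functions of $t$, are non-decreasing, so their suprema over $t>0$ are equivalent with the same constants; since $\|g\|_{q,(0,\infty)}=\lim_{t\to\infty}\|g\|_{q,(0,t)}=\sup_{t>0}\|g\|_{q,(0,t)}$, the equivalence transfers. Alternatively — and this is the cleaner route — one observes that Lemma \ref{lemma17} with $T=0$ gives directly the ``$\lesssim$'' bound $\|u^{-\theta-1/q}\b(u)K(u,f;L_\varkappa,L_\infty)\|_{q,(0,S)}\lesssim \|v^{\frac{1-\theta}{\varkappa}-\frac1q}\dddot\b(v)f^*(v)\|_{q,(0,S^\varkappa)}$ (the boundary term $T^{1-\theta}\b(T)f^{**}_{(\varkappa)}(T)$ vanishes as $T\to0^+$ under the nontriviality assumption), while Lemma \ref{lemma16} with $T=0$ gives the reverse ``$\gtrsim$''; taking $S=\infty$ closes both directions for the $L_\varkappa$ couple, and the $L_{\varkappa,\infty}$ couple is handled by the corresponding halves of Lemmas \ref{lemma16} and \ref{lemma17}.

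The only real obstacle is bookkeeping with the change of variables $v=u^\varkappa$ and verifying that $\dddot\b\in SV$ so that $\|v^{\frac1p-\frac1q}\dddot\b(v)f^*(v)\|_{q,(0,\infty)}$ is genuinely the Lorentz-Karamata quasi-norm $\|f\|_{p,q;\dddot\b}$ from \eqref{eLK}; both are routine given Lemma \ref{lemma2}. There is no deep difficulty here: the theorem is essentially a restatement of Lemma \ref{lemma15} in the language of interpolation spaces, exactly as the authors note by saying it ``can be proved analogously using Lemma \ref{lemma15}.''
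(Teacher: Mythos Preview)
Your proposal is correct and follows essentially the same approach as the paper: the authors state that the theorem ``can be proved analogously using Lemma~\ref{lemma15}'', and that is precisely what you do --- apply Lemma~\ref{lemma15} and pass to $t=\infty$ via monotonicity of the $L_q(0,t)$-norms. Your handling of the limit $t\to\infty$ (monotone convergence with uniform equivalence constants) is the natural way to make the one-line reference rigorous, and the alternative route through Lemmas~\ref{lemma16} and~\ref{lemma17} with $T=0$, $S=\infty$ is also valid but unnecessary here.
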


In the following subsections we study similar identities for the limiting and extremal constructions.
\subsection{${\mathcal L}$ and ${\mathcal L\mathcal L}$ spaces}\label{sub43}
Next theorem improves \cite[Lemma 5.4]{GOT_2005}.
\begin{thm}\label{corollary28}
Let $0<\theta\leq 1$, $0<\varkappa<\infty$, $p=\frac{\varkappa}{1-\theta}$, $0<q, r\leq \infty$ and $a, \b\in SV$. Then,
$$(L_\varkappa,L_\infty)^{\mathcal L}_{\theta,r,\b,q,a}=(L_{\varkappa,\infty},L_\infty)^{\mathcal L}_{\theta,r,\b,q,a}=L^{\mathcal L}_{p,r,\dddot{\b},q,\dddot{a}}.$$
In particular,
$$(L_\varkappa,L_\infty)^{\mathcal L}_{\theta,r,\b,q,1}=(L_{\varkappa,\infty},L_\infty)^{\mathcal L}_{\theta,r,\b,q,1}=L^{(p,q,r}_{\dddot{\b}}.$$
\end{thm}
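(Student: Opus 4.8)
The plan is to prove the two-sided embeddings
\[
(L_\varkappa,L_\infty)^{\mathcal L}_{\theta,r,\b,q,a}\subset L^{\mathcal L}_{p,r,\dddot{\b},q,\dddot{a}}\subset (L_{\varkappa,\infty},L_\infty)^{\mathcal L}_{\theta,r,\b,q,a}\subset (L_\varkappa,L_\infty)^{\mathcal L}_{\theta,r,\b,q,a},
\]
the last inclusion being immediate from $L_{\varkappa,\infty}\supset L_\varkappa$ and the monotonicity of the $K$-functional in the first argument of the couple, which forces $K(\cdot,f;L_{\varkappa,\infty},L_\infty)\lesssim K(\cdot,f;L_\varkappa,L_\infty)$. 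Recalling $\theta=1-\varkappa/p$, so that $\frac{1-\theta}{\varkappa}-\frac1q=\frac1p-\frac1q$, the first inclusion is the easy direction: by Lemma \ref{lemma16} applied with $T=0$, $S=t$, for every $t>0$ we have $\big\|v^{\frac1p-\frac1q}\dddot{a}(v)f^*(v)\big\|_{q,(0,t^\varkappa)}\lesssim\big\|u^{-\theta-\frac1q}a(u)K(u,f;L_\varkappa,L_\infty)\big\|_{q,(0,t)}$; then take the outer $\|t^{-1/r}\b(t)\cdot\|_{r,(0,\infty)}$ norm and perform the change of variables $w=t^\varkappa$ (absorbing the Jacobian into the power and noting $\b(t)=\dddot{\b}(w)$) to land on exactly $\|f\|_{L^{\mathcal L}_{p,r,\dddot{\b},q,\dddot{a}}}$.

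The substantive direction is $L^{\mathcal L}_{p,r,\dddot{\b},q,\dddot{a}}\subset (L_{\varkappa,\infty},L_\infty)^{\mathcal L}_{\theta,r,\b,q,a}$. Here I would start from Lemma \ref{lemma17} (the $L_{\varkappa,\infty}$ half, i.e.\ \eqref{ecP}), used with $T=0$ only formally — instead, keep $T$ general: for each fixed outer point $t$ the inner $L_q(0,t)$ norm splits as
\[
\big\|u^{-\theta-\tfrac1q}\b(u)K(u,f;L_{\varkappa,\infty},L_\infty)\big\|_{q,(0,t)}\lesssim \big\|v^{\frac1p-\frac1q}\dddot{\b}(v)f^*(v)\big\|_{q,(0,t^\varkappa)}
\]
directly from \eqref{ecP} with $T=0$ (the boundary term $T^{1-\theta}\b(T)f^{**}_{(\varkappa)}(T)$ vanishes as $T\to 0^+$ because $\theta\le 1$ and $f^{**}_{(\varkappa)}$ grows at worst like a negative power near $0$ for $f\in L_{\varkappa,\infty}+L_\infty$). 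Wait — one must be careful that $f^{**}_{(\varkappa)}(T)$ need not tend to a finite limit, but $T^{1-\theta}f^{**}_{(\varkappa)}(T)=T^{-\theta}(\int_0^{T^\varkappa}f^*(\tau)^\varkappa d\tau)^{1/\varkappa}\to 0$ as $T\to 0$ whenever the right-hand $L^{\mathcal L}$ quasi-norm is finite, since that norm controls $\|v^{\frac1p-\frac1q}\dddot{\b}(v)f^*(v)\|_{q,(0,\varepsilon)}$ which in turn dominates $\varepsilon^{1/p}\dddot{\b}(\varepsilon)f^*(\varepsilon)$ by Lemma \ref{lemma3}; a short monotonicity argument on $f^*$ then gives the decay. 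Granting this, apply the outer $\|t^{-1/r}\b(t)\cdot\|_{r,(0,\infty)}$ norm and the substitution $w=t^\varkappa$ to obtain $\|f\|_{(L_{\varkappa,\infty},L_\infty)^{\mathcal L}_{\theta,r,\b,q,a}}\lesssim\|f\|_{L^{\mathcal L}_{p,r,\dddot{\b},q,\dddot{a}}}$, which closes the circle.

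\textbf{Main obstacle.} The delicate point is justifying that the boundary contribution $T^{1-\theta}\b(T)f^{**}_{(\varkappa)}(T)$ in Lemma \ref{lemma17} genuinely disappears in the limit $T\to 0$ — equivalently, that applying \eqref{ecP} with $T=0$ is legitimate — and handling the endpoint case $\theta=1$ (where $p=\infty$), in which $u^{-\theta-1/q}=u^{-1-1/q}$ near $0$ and one must check the inner integrals converge; this is exactly where the standing assumption $\|t^{-1/r}\b(t)\|_{r,(1,\infty)}<\infty$ on the $\mathcal L$-spaces and finiteness of the $L^{\mathcal L}$ quasi-norm are used to rule out triviality. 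The ``In particular'' statement is then immediate by specializing $a\equiv 1$ (hence $\dddot{a}\equiv 1$) and invoking Remark \ref{remark21}. The rest — change of variables, the two monotone Hardy-type estimates of Lemmas \ref{lemma16} and \ref{lemma17}, Lemma \ref{lemma2}(ii) to identify $\dddot{\b},\dddot{a}\in SV$ — is routine.
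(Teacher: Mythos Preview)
Your chain of inclusions is misassembled. From $L_\varkappa\subset L_{\varkappa,\infty}$ and the resulting inequality $K(\cdot,f;L_{\varkappa,\infty},L_\infty)\lesssim K(\cdot,f;L_\varkappa,L_\infty)$ you correctly deduce that the $(L_{\varkappa,\infty},L_\infty)^{\mathcal L}$-norm is \emph{smaller}, hence the trivial embedding goes the other way: $(L_\varkappa,L_\infty)^{\mathcal L}_{\theta,r,\b,q,a}\subset (L_{\varkappa,\infty},L_\infty)^{\mathcal L}_{\theta,r,\b,q,a}$, not the inclusion you wrote. With your stated chain the circle never closes. (It can be repaired by swapping the roles of $L_\varkappa$ and $L_{\varkappa,\infty}$ in the first and second steps --- Lemma~\ref{lemma16} and Lemma~\ref{lemma17} both have $L_{\varkappa,\infty}$ versions --- but as written it is wrong.)

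More importantly, the whole detour through Lemmas~\ref{lemma16}--\ref{lemma17} and the ``main obstacle'' you identify are unnecessary. The paper's route is a one-line application of Lemma~\ref{lemma15}, which already gives, for \emph{both} couples $A_0=L_\varkappa$ and $A_0=L_{\varkappa,\infty}$ and every $t>0$, the two-sided equivalence
\[
\big\|u^{-\theta-\frac{1}{q}}a(u)K(u,f;A_0,L_\infty)\big\|_{q,(0,t)}\approx\big\|v^{\frac{1}{p}-\frac{1}{q}}\dddot{a}(v)f^{*}(v)\big\|_{q,(0,t^{\varkappa})}.
\]
One then applies the outer norm $\|t^{-1/r}\b(t)\,\cdot\,\|_{r,(0,\infty)}$ and substitutes $x=t^\varkappa$, exactly as in the proof of Theorem~\ref{corollary29}. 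There is no boundary term to control: Lemmas~\ref{lemma16} and~\ref{lemma17} are generalizations of Lemma~\ref{lemma15} to arbitrary intervals $(T,S)$, designed for the $\mathcal R$, $\mathcal{LR}$, $\mathcal{RL}$ cases where the inner interval does not start at $0$; for the $\mathcal L$ and $\mathcal{LL}$ cases the interval is $(0,t)$ and Lemma~\ref{lemma15} applies directly. Your worry about $T^{1-\theta}\b(T)f^{**}_{(\varkappa)}(T)$ as $T\to 0$ is a self-inflicted difficulty (and a real one for $f\in L_{\varkappa,\infty}\setminus L_\varkappa$, where $f^{**}_{(\varkappa)}(T)$ can be infinite).
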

\begin{thm}\label{corollary29}
Let $0<\theta\leq 1$, $0<\varkappa<\infty$, $p=\frac{\varkappa}{1-\theta}$, $0<q, r, s\leq \infty$ and $a, \b, c\in SV$. Then,
$$(L_\varkappa,L_\infty)^{\mathcal L,\mathcal L}_{\theta,s,c,r,\b,q,a}=(L_{\varkappa,\infty},L_\infty)^{\mathcal L,\mathcal L}_{\theta,s,c,r,\b,q,a}=L^{\mathcal L,\mathcal L}_{p,(s,\dddot{c},r,\dddot{\b},q,\dddot{a})}.$$
\end{thm}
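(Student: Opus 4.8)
The plan is to reduce everything to Lemma \ref{lemma15} together with two elementary power changes of variables; the structure of the underlying couple enters only through that lemma. First I would fix $f\in L_\varkappa+L_\infty$ and unravel the definition of $\|f\|_{\mathcal L,\mathcal L;\theta,s,c,r,\b,q,a}$ for the couple $(L_\varkappa,L_\infty)$. For each fixed $u>0$ the innermost factor is $\big\|v^{-\theta-\frac{1}{q}}a(v)K(v,f;L_\varkappa,L_\infty)\big\|_{q,(0,u)}$, and since the interval of integration has the form $(0,u)$, Lemma \ref{lemma15} applies verbatim (with $a$ in the role of $\b$ and $u$ in the role of $t$) and yields, with constants depending only on $\theta,\varkappa,q$ and the $SV$-parameters of $a$ and hence \emph{uniform in} $u$,
\[
\big\|v^{-\theta-\frac{1}{q}}a(v)K(v,f;L_\varkappa,L_\infty)\big\|_{q,(0,u)}\approx\big\|w^{\frac{1-\theta}{\varkappa}-\frac{1}{q}}\dddot{a}(w)f^{*}(w)\big\|_{q,(0,u^{\varkappa})}.
\]
The second assertion of Lemma \ref{lemma15} gives the identical equivalence with $K(v,f;L_{\varkappa,\infty},L_\infty)$ on the left, so both couples are treated simultaneously. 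Note that here we do \emph{not} need the sharper Lemmas \ref{lemma16}--\ref{lemma17}, which are tailored to intervals with a non-zero left endpoint; the $\mathcal L,\mathcal L$ construction only involves intervals of the form $(0,\cdot)$.

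Next I would substitute this equivalence into the definition of the quasi-norm and perform the change of variables $y=u^{\varkappa}$ in the middle $L_r$-integral. Using $\b(u)=\b(y^{1/\varkappa})=\dddot{\b}(y)$, $\frac{du}{u}=\varkappa^{-1}\frac{dy}{y}$, and $u\in(0,t)\Leftrightarrow y\in(0,t^{\varkappa})$, the Jacobian contributes only the harmless constant $\varkappa^{-1/r}$ (and for $r=\infty$ the substitution is an exact bijection on the supremum), leaving
\[
\Big\|u^{-\frac{1}{r}}\b(u)\big\|w^{\frac{1-\theta}{\varkappa}-\frac{1}{q}}\dddot{a}(w)f^{*}(w)\big\|_{q,(0,u^{\varkappa})}\Big\|_{r,(0,t)}\approx\Big\|y^{-\frac{1}{r}}\dddot{\b}(y)\big\|w^{\frac{1-\theta}{\varkappa}-\frac{1}{q}}\dddot{a}(w)f^{*}(w)\big\|_{q,(0,y)}\Big\|_{r,(0,t^{\varkappa})}.
\]
Then I would do the analogous substitution $x=t^{\varkappa}$ in the outer $L_s$-integral, using $c(t)=\dddot{c}(x)$ and $\frac{dt}{t}=\varkappa^{-1}\frac{dx}{x}$, to arrive at
\[
\|f\|_{\mathcal L,\mathcal L;\theta,s,c,r,\b,q,a}\approx\bigg\|x^{-\frac{1}{s}}\dddot{c}(x)\Big\|y^{-\frac{1}{r}}\dddot{\b}(y)\big\|w^{\frac{1-\theta}{\varkappa}-\frac{1}{q}}\dddot{a}(w)f^{*}(w)\big\|_{q,(0,y)}\Big\|_{r,(0,x)}\bigg\|_{s,(0,\infty)}.
\]

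To finish, I would invoke two bookkeeping facts: since $p=\varkappa/(1-\theta)$ we have $\frac{1-\theta}{\varkappa}=\frac{1}{p}$, so the innermost exponent is $w^{\frac{1}{p}-\frac{1}{q}}$; and $\dddot{a},\dddot{\b},\dddot{c}\in SV$ by Lemma \ref{lemma2}(i), since a power dilation of an $SV$-function is again $SV$. Hence the last display is precisely $\|f\|_{L^{\mathcal L,\mathcal L}_{p,(s,\dddot{c},r,\dddot{\b},q,\dddot{a})}}$, which proves $(L_\varkappa,L_\infty)^{\mathcal L,\mathcal L}_{\theta,s,c,r,\b,q,a}=L^{\mathcal L,\mathcal L}_{p,(s,\dddot{c},r,\dddot{\b},q,\dddot{a})}$; running the same computation with $K(\cdot,f;L_{\varkappa,\infty},L_\infty)$ in place of $K(\cdot,f;L_\varkappa,L_\infty)$ yields the middle identity. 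As for difficulty, there is essentially no obstacle: all the analytic substance (Kr\'ee's formula \eqref{eKsigma}, respectively Holmstedt's formula for $L_{\varkappa,\infty}$) has been pre-digested into Lemma \ref{lemma15}, and what remains is two power substitutions. The only points demanding a little care are (i) checking that the equivalence constants in Lemma \ref{lemma15} are uniform in the running variable $u$, which they are, and (ii) tracking the dilated slowly varying functions $\dddot{a},\dddot{\b},\dddot{c}$ correctly through both changes of variables.
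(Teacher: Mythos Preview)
Your proposal is correct and follows essentially the same approach as the paper: apply Lemma \ref{lemma15} to the innermost $(0,u)$-norm (which handles both $L_\varkappa$ and $L_{\varkappa,\infty}$ at once), then perform the two power substitutions $y=u^{\varkappa}$ and $x=t^{\varkappa}$ in the middle and outer norms. The paper's proof is simply a more compressed version of exactly this computation.
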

\begin{proof}
Put $X=(A_0,L_\infty)^{\mathcal L,\mathcal L}_{\theta,s,c,r,\b,q,a}$, where $A_0=L_\varkappa$ or $A_0=L_{\varkappa,\infty}$, respectively. By Lemma \ref{lemma15} and the change of variables $y=u^\varkappa$, $x=t^\varkappa$, we obtain
\begin{align*}
\|f\|_{X}&\approx
\bigg\|t^{-\frac{1}{s}}c(t)\Big\|u^{-\frac{1}{r}}\b(u) \|v^{\frac{1}{p}-\frac{1}{q}} \dddot{a}(v) f^*(v) \|_{q,(0,u^\varkappa)}\Big\|_{r,(0,t)}\bigg\|_{s,(0,\infty)}\\
&\approx\bigg\|t^{-\frac{1}{s}}c(t)\Big\|y^{-\frac{1}{r}}\dddot{\b}(y) \|v^{\frac{1}{p}-\frac{1}{q}} \dddot{a}(v) f^*(v) \|_{q,(0,y)}\Big\|_{r,(0,t^\varkappa)}\bigg\|_{s,(0,\infty)}\\
&\approx\bigg\|x^{-\frac{1}{s}}\dddot{c}(x)\Big\|y^{-\frac{1}{r}}\dddot{\b}(y) \|v^{\frac{1}{p}-\frac{1}{q}} \dddot{a}(v) f^*(v) \|_{q,(0,y)}\Big\|_{r,(0,x)}\bigg\|_{s,(0,\infty)}=\|f\|_{L^{\mathcal L,\mathcal L}_{p,(s,\dddot{c},r,\dddot{\b},q,\dddot{a})}}.
\end{align*}
This completes the proof.
\end{proof}

\subsection{${\mathcal R}$ and ${\mathcal R\mathcal R}$ spaces}\label{sub44}

\begin{thm}\label{corollary30}
Let $0<\theta\leq 1$, $0<\varkappa<\infty$, $p=\tfrac{\varkappa}{1-\theta}$, $0<q, r\leq \infty$ and $a, \b\in SV$. Then,
$$(L_\varkappa,L_\infty)^{\mathcal R}_{\theta,r,\b,q,a}=(L_{\varkappa,\infty},L_\infty)^{\mathcal R}_{\theta,r,\b,q,a}=L^{\mathcal R}_{p,r,\dddot{\b},q,\dddot{a}}.$$
In particular,
$$(L_\varkappa,L_\infty)^{\mathcal R}_{\theta,r,\b,q,1}=(L_{\varkappa,\infty},L_\infty)^{\mathcal R}_{\theta,r,\b,q,1}=L^{p),q,r}_{\dddot{\b}}.$$
\end{thm}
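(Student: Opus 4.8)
The plan is to follow the same strategy used in the proof of Theorem \ref{corollary29}, but now based on the $\mathcal R$-type estimates rather than the $\mathcal L$-type ones. Writing $X=(A_0,L_\infty)^{\mathcal R}_{\theta,r,\b,q,a}$ with $A_0=L_\varkappa$ or $A_0=L_{\varkappa,\infty}$, the quasi-norm $\|f\|_X$ equals
$$
\Big\|t^{-\frac1r}\b(t)\,\big\|u^{-\theta-\frac1q}a(u)K(u,f;A_0,L_\infty)\big\|_{q,(t,\infty)}\Big\|_{r,(0,\infty)}.
$$
First I would establish an analogue of Lemma \ref{lemma15} for the tail interval $(t,\infty)$, namely
$$
\big\|u^{-\theta-\frac1q}a(u)K(u,f;A_0,L_\infty)\big\|_{q,(t,\infty)}\approx\big\|v^{\frac{1-\theta}{\varkappa}-\frac1q}\dddot a(v)f^*(v)\big\|_{q,(t^\varkappa,\infty)}.
$$
The lower bound ``$\gtrsim$'' is immediate from Lemma \ref{lemma16} (take $T=t$, $S=\infty$), which already handles both couples $(L_\varkappa,L_\infty)$ and $(L_{\varkappa,\infty},L_\infty)$. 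For the upper bound ``$\lesssim$'' one invokes Lemma \ref{lemma17} with $T=t$, $S=\infty$, giving
$$
\big\|u^{-\theta-\frac1q}a(u)K(u,f;A_0,L_\infty)\big\|_{q,(t,\infty)}\lesssim t^{1-\theta}\b(t)\,\text{(wait---$a$ not $\b$)}
$$
more precisely $\lesssim t^{1-\theta}a(t)f^{**}_{(\varkappa)}(t)+\big\|v^{\frac{1-\theta}{\varkappa}-\frac1q}\dddot a(v)f^*(v)\big\|_{q,(t^\varkappa,\infty)}$, so one still has to absorb the extra term $t^{1-\theta}a(t)f^{**}_{(\varkappa)}(t)$ into the desired right-hand side.

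The absorption step is where the Hardy-type machinery enters. After substituting $v=u^\varkappa$ inside $f^{**}_{(\varkappa)}(t)=\frac1t\big(\int_0^{t^\varkappa}f^*(\tau)^\varkappa d\tau\big)^{1/\varkappa}$ and raising everything to the power $\varkappa$ (passing from $L_q$ to $L_{q/\varkappa}$), the outer $L_r$-norm of $t\mapsto t^{1-\theta}a(t)f^{**}_{(\varkappa)}(t)=t^{-\theta}a(t)\big(\int_0^{t^\varkappa}f^*(\tau)^\varkappa d\tau\big)^{1/\varkappa}$ becomes, after the change $x=t^\varkappa$, an expression of the form $\big\|x^{-\theta/\varkappa-1/r}\dddot a(x)\int_0^x f^*(\tau)^\varkappa d\tau\big\|^{1/\varkappa}_{r/\varkappa,(0,\infty)}$ with negative exponent $\alpha=-\theta<0$ on the relevant scale; hence Lemma \ref{teHardy1} (or rather Corollary \ref{corollary6}, which is tailor-made for the $(t,\infty)$-tail structure) bounds it by $\big\|v^{\frac{1-\theta}{\varkappa}-\frac1r}\dddot a(v)\dddot\b(v)f^*(v)\big\|_{r,(0,\infty)}$, and then Lemma \ref{lemma3} re-inserts this pointwise bound under the tail $L_q$-norm. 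Combining, one gets the two-sided equivalence $\|f\|_X\approx\big\|t^{-\frac1r}\dddot\b(t^?)\dots\big\|$; more carefully, one should run Corollary \ref{corollary6} \emph{after} taking the outer $L_r$-norm, i.e.\ apply it to the full nested expression, exactly as is done in the proof of Corollary \ref{corollary6} itself, so that the leftover boundary term is controlled by $\mathrm{RHS}$ of the target identity.

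Finally, once the tail version of Lemma \ref{lemma15} is in hand, the conclusion follows by the same two changes of variables $y=u^\varkappa$, $x=t^\varkappa$ used in Theorem \ref{corollary29}: they turn $\b(u)$ into $\dddot\b(y)$, $a(v)$ into $\dddot a(v)$ (already done), the exponent $t^{-1/r}$ into $x^{-1/r}$ up to the Jacobian absorbed in the $SV$-function, and the inner interval $(u,\infty)=(y^{1/\varkappa}\!\!\leadsto$ wait, $(u^\varkappa,\infty)=(y,\infty)$ and $(t^\varkappa,\infty)$ stays a tail, so the nesting structure is preserved) into the defining integral of $L^{\mathcal R}_{p,r,\dddot\b,q,\dddot a}$ with $\frac1p=\frac{1-\theta}{\varkappa}$. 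This yields $\|f\|_X\approx\|f\|_{L^{\mathcal R}_{p,r,\dddot\b,q,\dddot a}}$ for both choices of $A_0$, which is the first displayed identity; the ``in particular'' statement is then just the case $a\equiv1$ together with Remark \ref{remark21}. The main obstacle, as in all these arguments, is the treatment of the boundary term $t^{1-\theta}a(t)f^{**}_{(\varkappa)}(t)$ produced by Lemma \ref{lemma17}: getting the exponents and the passage through $L_{q/\varkappa}$/$L_{r/\varkappa}$ right so that Corollary \ref{corollary6} applies with a genuinely negative parameter, and checking that this works uniformly for $0<q,r\le\infty$ (the subtlety $0<q<1$ being exactly why the ``non-increasing'' hypothesis on $f^*$ is needed) — but since $f^*$ is automatically non-increasing, all hypotheses of Lemma \ref{lemma5} and its corollaries are met.
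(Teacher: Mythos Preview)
Your overall route is the paper's route: lower bound from Lemma~\ref{lemma16}, upper bound by passing to $\big(\int_0^{u^\varkappa}f^*(\tau)^\varkappa d\tau\big)^{1/\varkappa}$, raising to the $\varkappa$-th power, changing variables $u^\varkappa=y$, $t^\varkappa=x$, and invoking Corollary~\ref{corollary6} with exponent $\alpha=-\theta<0$ on the full nested expression. Your self-correction (``more carefully, one should run Corollary~\ref{corollary6} \emph{after} taking the outer $L_r$-norm'') is the essential point and is exactly what the paper does.

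One clarification is worth making, since your initial framing is misleading. A pointwise-in-$t$ tail analogue of Lemma~\ref{lemma15},
\[
\big\|u^{-\theta-\frac1q}a(u)K(u,f;A_0,L_\infty)\big\|_{q,(t,\infty)}\approx\big\|v^{\frac{1-\theta}{\varkappa}-\frac1q}\dddot a(v)f^*(v)\big\|_{q,(t^\varkappa,\infty)},
\]
is \emph{not} available: the boundary term $t^{1-\theta}a(t)f^{**}_{(\varkappa)}(t)$ produced by Lemma~\ref{lemma17} genuinely cannot be absorbed into the tail $L_q$-norm for fixed~$t$. It only disappears after integration in the outer $L_r$-norm, which is precisely why Corollary~\ref{corollary6} (a two-layer Hardy estimate) is the right tool rather than Lemma~\ref{lemma5} alone. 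The paper avoids this detour entirely: instead of going through Lemma~\ref{lemma17} and then reabsorbing the boundary term, it bounds $K(u,f;A_0,L_\infty)\lesssim\big(\int_0^{u^\varkappa}f^*(\tau)^\varkappa d\tau\big)^{1/\varkappa}$ directly via \eqref{eKsigma} and \eqref{ec99}, rewrites the nested norm in $L_{q/\varkappa}$, $L_{r/\varkappa}$ after the substitutions, and applies Corollary~\ref{corollary6} in one shot. Your variant (split via Lemma~\ref{lemma17}, then control the first summand by Lemma~\ref{teHardy1} plus Lemma~\ref{lemma3}) also works and amounts to reproving the absorption step inside Corollary~\ref{corollary6}; it is correct but slightly longer.
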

\begin{proof}
Put $X=(A_0,L_\infty)^{\mathcal R}_{\theta,r,\b,q,a}$, where $A_0=L_\varkappa$ or $A_0=L_{\varkappa,\infty}$. Lemma \ref{lemma16} and the change of variables $x=t^\varkappa$ imply that
\begin{align*}
\|f\|_{X}&\gtrsim\Big\|t^{-\frac{1}{r}}\b(t)\big\|u^{\frac{1-\theta}{\varkappa}-\frac{1}{q}}\,\dddot{a}(u)f^{*}(u)\big\|_{q,(t^\varkappa,\infty)}\Big\|_{r,(0,\infty)}\\
&\approx\Big\|x^{-\frac{1}{r}}\dddot{\b}(x)\big\|u^{\frac{1}{p}-\frac{1}{q}}\,\dddot{a}(u)f^{*}(u)\big\|_{q,(x,\infty)}\Big\|_{r,(0,\infty)}=\|f\|_{L^{\mathcal R}_{p,r,\dddot{\b},q,\dddot{a}}}.
\end{align*}
On the other hand, by \eqref{eKsigma}, \eqref{ec99} and suitable change of variables, it follows that 
\begin{align*}
\|f\|_{X}&\lesssim\bigg\|t^{-\frac{1}{r}}\b(t)\Big\|u^{-\theta-\frac{1}{q}}a(u)\Big(\int_0^{u^\varkappa}f^*(\tau)^\varkappa\, d\tau\Big)^{1/\varkappa}\Big\|_{q,(t,\infty)}\bigg\|_{r,(0,\infty)}\\
&=\bigg\|t^{-\frac{1}{r}}\b(t)\Big\|u^{-\theta\varkappa-\frac{\varkappa}{q}}a(u)^\varkappa\int_0^{u^\varkappa}f^*(\tau)^\varkappa\, d\tau\Big\|^{1/\varkappa}_{q/\varkappa,(t,\infty)}\bigg\|_{r,(0,\infty)}\\
&\approx\bigg\|t^{-\frac{1}{r}}\b(t)\Big\|y^{-\theta-\frac{\varkappa}{q}}\,\dddot{a}(y)^\varkappa\int_0^{y}f^*(\tau)^\varkappa\, d\tau\Big\|^{1/\varkappa}_{q/\varkappa,(t^\varkappa,\infty)}\bigg\|_{r,(0,\infty)}\\
&\approx\bigg\|x^{-\frac{1}{r}}\dddot{\b}(x)\Big\|y^{-\theta-\frac{\varkappa}{q}}\,\dddot{a}(y)^\varkappa\int_0^{y}f^*(\tau)^\varkappa\, d\tau\Big\|^{1/\varkappa}_{q/\varkappa,(x,\infty)}\bigg\|_{r,(0,\infty)}\\
&=\bigg\|x^{-\frac{\varkappa}{r}}\dddot{\b}(x)^\varkappa\Big\|y^{-\theta-\frac{\varkappa}{q}}\,\dddot{a}(y)^\varkappa\int_0^{y}f^*(\tau)^\varkappa\, d\tau\Big\|_{q/\varkappa,(x,\infty)}\bigg\|^{1/\varkappa}_{r/\varkappa,(0,\infty)}.
\end{align*}
Now, the previous estimate and Corollary \ref{corollary6} give
\begin{align*}
\|f\|_{X}&\lesssim
\Big\|x^{-\frac{\varkappa}{r}}\dddot{\b}(x)^\varkappa\big\|y^{1-\theta-\frac{\varkappa}{q}}\,\dddot{a}(y)^\varkappa f^*(y)^\varkappa\big\|_{q/\varkappa,(x,\infty)}\Big\|^{1/\varkappa}_{r/\varkappa,(0,\infty)}\\
&= \Big\|x^{-\frac{1}{r}}\dddot{\b}(x)\big\|y^{1-\theta-\frac{\varkappa}{q}}\,\dddot{a}(y)^\varkappa f^*(y)^\varkappa\big\|^{1/\varkappa}_{q/\varkappa,(x,\infty)}\Big\|_{r,(0,\infty)}\\
&=\Big\|x^{-\frac{1}{r}}\dddot{\b}(x)\big\|y^{\frac{1-\theta}{\varkappa}-\frac{1}{q}}\,\dddot{a}(y)f^*(y)\big\|_{q,(x,\infty)}\Big\|_{r,(0,\infty)}=\|f\|_{L^{\mathcal R}_{p,r,\dddot{\b},q,\dddot{a}}}.
\end{align*}
This completes de proof.
\end{proof}

\begin{thm}\label{corollary31}
Let $0<\theta\leq 1$, $0<\varkappa<\infty$, $p=\frac{\varkappa}{1-\theta}$, $0<q,s,r\leq \infty$ and $a, \b, c\in SV$. Then,
$$(L_\varkappa,L_\infty)^{\mathcal R,\mathcal R}_{\theta,s,c,r,\b,q,a}=(L_{\varkappa,\infty},L_\infty)^{\mathcal R,\mathcal R}_{\theta,s,c,r,\b,q,a}=L^{\mathcal R,\mathcal R}_{p,(s,\dddot{c},r,\dddot{\b},q,\dddot{a})}.$$
\end{thm}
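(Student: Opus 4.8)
The plan is to mimic the structure of the proof of Theorem \ref{corollary30}, passing from the $\mathcal{R},\mathcal{R}$ interpolation quasi-norm to an expression involving only $f^*$ via Lemma \ref{lemma15} and Lemma \ref{lemma16}, and then invoking Corollary \ref{corollary7} in place of Corollary \ref{corollary6}. Concretely, put $X=(A_0,L_\infty)^{\mathcal R,\mathcal R}_{\theta,s,c,r,\b,q,a}$ with $A_0=L_\varkappa$ or $A_0=L_{\varkappa,\infty}$. The quasi-norm $\|f\|_X$ has the form
$$\bigg\|t^{-\frac{1}{s}}c(t)\Big\|u^{-\frac{1}{r}}\b(u)\big\|v^{-\theta-\frac{1}{q}}a(v)K(v,f;A_0,L_\infty)\big\|_{q,(u,\infty)}\Big\|_{r,(t,\infty)}\bigg\|_{s,(0,\infty)}.$$

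First I would establish the lower bound ``$\gtrsim$''. By Lemma \ref{lemma16}, the innermost norm $\|v^{-\theta-1/q}\b(v)K(v,f;A_0,L_\infty)\|_{q,(u,\infty)}$ is bounded below by $\|w^{\frac{1-\theta}{\varkappa}-\frac{1}{q}}\dddot{a}(w)f^*(w)\|_{q,(u^\varkappa,\infty)}$ (with $a$ in place of $\b$ there); applying the monotonicity of the outer norms in $r$ and $s$, and then the changes of variables $y=u^\varkappa$, $x=t^\varkappa$ together with Lemma \ref{lemma2}(i) to replace $\b(u),c(t)$ by $\dddot{\b}(y),\dddot{c}(x)$ and absorb the Jacobian powers (exactly as in the proofs of Theorems \ref{corollary29} and \ref{corollary30}), one arrives at $\|f\|_X\gtrsim\|f\|_{L^{\mathcal R,\mathcal R}_{p,(s,\dddot{c},r,\dddot{\b},q,\dddot{a})}}$, using $\frac{1}{p}=\frac{1-\theta}{\varkappa}$.

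For the upper bound ``$\lesssim$'', I would use $K(v,f;A_0,L_\infty)\lesssim v\big(\int_0^{v^\varkappa}f^*(\tau)^\varkappa\,d\tau\big)^{1/\varkappa}$, which holds for both $A_0=L_\varkappa$ (by \eqref{eKsigma}) and $A_0=L_{\varkappa,\infty}$ (by \eqref{ec99}). Then, following the computation in Theorem \ref{corollary30}, I would rewrite all three nested $L_q$, $L_r$, $L_s$ norms as $L_{q/\varkappa}$, $L_{r/\varkappa}$, $L_{s/\varkappa}$ norms raised to the power $1/\varkappa$ (pulling the exponent $1/\varkappa$ through), perform the substitutions $w=v^\varkappa$, $y=u^\varkappa$, $x=t^\varkappa$ to turn $\b,c,a$ into $\dddot\b,\dddot c,\dddot a$, and land on an expression of the form
$$\Bigl\|x^{-\frac{\varkappa}{s}}\dddot c(x)^\varkappa\bigl\|y^{-\frac{\varkappa}{r}}\dddot\b(y)^\varkappa\bigl\|w^{-\theta-\frac{\varkappa}{q}}\dddot a(w)^\varkappa\textstyle\int_0^w f^*(\tau)^\varkappa d\tau\bigr\|_{q/\varkappa,(y,\infty)}\bigr\|_{r/\varkappa,(x,\infty)}\Bigr\|_{s/\varkappa,(0,\infty)}^{1/\varkappa}.$$
To this I would apply Corollary \ref{corollary7} with exponent $\alpha=-\theta<0$ and with $f^*(\tau)^\varkappa$ (a non-increasing function) in place of $f$, and the triple $(c^\varkappa,\b^\varkappa,a^\varkappa)$ of $SV$-functions (recall $SV$ is closed under powers by Lemma \ref{lemma2}(i)); this converts $\int_0^w f^*(\tau)^\varkappa d\tau$ into $w f^*(w)^\varkappa$. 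Undoing the power $1/\varkappa$ and simplifying the exponent $\frac{1-\theta}{\varkappa}-\frac1q=\frac1p-\frac1q$ then yields $\|f\|_X\lesssim\|f\|_{L^{\mathcal R,\mathcal R}_{p,(s,\dddot c,r,\dddot\b,q,\dddot a)}}$, completing the chain of equalities.

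The routine parts are the bookkeeping of the three changes of variables and the manipulation of nested quasi-norms with the $1/\varkappa$ exponent, which is already carried out in the single-norm case in Theorem \ref{corollary30} and in the $\mathcal{LL}$ case in Theorem \ref{corollary29}. The main obstacle is precisely the verification that Corollary \ref{corollary7} applies with the right parameters — in particular that after the substitutions the powers of $x$, $y$, $w$ and the role of the three slowly varying weights align exactly with the hypotheses of that corollary (the shift in exponent from $\alpha$ to $\alpha+1$ is what produces the passage from $f^{**}_{(\varkappa)}$-type expressions back to $f^*$), and that the ``non-increasing'' hypothesis there is met, which it is since $f^*(\cdot)^\varkappa$ is non-increasing. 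Once that alignment is checked, the equality of all three spaces follows, just as in the $\mathcal{R}$ case.
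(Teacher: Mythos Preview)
Your proposal is correct and follows essentially the same route as the paper's proof: Lemma \ref{lemma16} plus the changes of variables $y=u^\varkappa$, $x=t^\varkappa$ for the lower bound, and the $K$-functional estimates \eqref{eKsigma}, \eqref{ec99} together with the $\varkappa$-power rewriting of the three nested norms and Corollary \ref{corollary7} (with $\alpha=-\theta$ and the non-increasing function $f^{*\,\varkappa}$) for the upper bound. One minor slip: in your upper-bound step the estimate should read $K(v,f;A_0,L_\infty)\lesssim\big(\int_0^{v^\varkappa}f^*(\tau)^\varkappa\,d\tau\big)^{1/\varkappa}$ without the extra factor $v$, but the subsequent computation you outline implicitly uses the correct form.
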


\begin{proof}
Put $X=(A_0,L_\infty)^{\mathcal R,\mathcal R}_{\theta,s,c,r,\b,q,a}$, where $A_0=L_\varkappa$ or $A_0=L_{\varkappa,\infty}$. Using Lemma \ref{lemma16} and the change of variables $u^m=y$, $t^m=x$ we obtain the desired lower bound
\begin{align*}
\|f\|_{X}&\gtrsim \bigg\|t^{-\frac{1}{s}}\,c(t)\Big\|u^{-\frac{1}{r}}\,\b(u)\big\|z^{\frac{1-\theta}{\varkappa}-\frac{1}{q}}\,\dddot{a}(z) f^*(z)\Big\|_{q,(u^m,\infty)}\Big\|_{r,(t,\infty)}\bigg\|_{s,(0,\infty)}\\
&\approx\bigg\|x^{-\frac{1}{s}}\,\dddot{c}(x)\Big\|y^{-\frac{1}{r}}\,\dddot{\b}(y)\big\|z^{\frac{1}{p}-\frac{1}{q}}\,\dddot{a}(z) f^*(z)\Big\|_{q,(y,\infty)}\Big\|_{r,(x,\infty)}\bigg\|_{s,(0,\infty)}\\&=\|f\|_{L^{\mathcal R,\mathcal R}_{p,(s,\dddot{c},r,\dddot{\b},q,\dddot{a})}}.
\end{align*}
Now we proceed with the upper bound. Using \eqref{eKsigma}, \eqref{ec99} and making the change of variables $v^\varkappa=z$, $u^\varkappa=y$ and $t^\varkappa=x$, we deduce that
\begin{align*}
\|f\|_{X}&\lesssim\Bigg\|t^{-\frac{1}{s}}c(t)\bigg\|u^{-\frac{1}{r}}\b(u)\Big\|v^{-\theta-\frac{1}{q}}a(v)\Big(\int_0^{v^\varkappa}f^*(\tau)^\varkappa\, d\tau\Big)^{1/\varkappa}\Big\|_{q,(u,\infty)}\bigg\|_{r,(t,\infty)}\Bigg\|_{s,(0,\infty)}\\
&=\Bigg\|t^{-\frac{\varkappa}{s}}c(t)^\varkappa\bigg\|u^{-\frac{\varkappa}{r}}\b(u)^\varkappa\Big\|v^{-\theta\varkappa-\frac{\varkappa}{q}}a(v)^\varkappa\int_0^{v^\varkappa}f^*(\tau)^\varkappa\, d\tau\Big\|_{\frac{q}{\varkappa},(u,\infty)}\bigg\|_{\frac{r}{\varkappa},(t,\infty)}\Bigg\|^{1/\varkappa}_{\frac{s}{\varkappa},(0,\infty)}\\
&\approx\Bigg\|x^{-\frac{1}{s}}\,\dddot{c}(x)^\varkappa\bigg\|y^{-\frac{1}{r}}\,\dddot{\b}(y)^\varkappa\Big\|z^{-\theta-\frac{\varkappa}{q}}\,\dddot{a}(z)^\varkappa\int_0^{z}f^*(\tau)^\varkappa\, d\tau\Big\|_{\frac{q}{\varkappa},(y,\infty)}\bigg\|_{\frac{r}{\varkappa},(x,\infty)}\Bigg\|^{1/\varkappa}_{\frac{s}{\varkappa},(0,\infty)}.
\end{align*}
Corollary \ref{corollary7} yields the estimate 
\begin{align*}
\|f\|_{X}&\lesssim \bigg\|x^{-\frac{1}{s}}\,\dddot{c}(x)^\varkappa\Big\|y^{-\frac{1}{r}}\,\dddot{\b}(y)^\varkappa\big\|z^{1-\theta-\frac{\varkappa}{q}}\,\dddot{a}(z)^\varkappa f^*(z)^\varkappa\Big\|_{\frac{q}{\varkappa},(y,\infty)}\Big\|_{\frac{r}{\varkappa},(x,\infty)}\bigg\|^{1/\varkappa}_{\frac{s}{\varkappa},(0,\infty)}\\
&=\bigg\|x^{-\frac{1}{s}}\,\dddot{c}(x)\Big\|y^{-\frac{1}{r}}\,\dddot{\b}(y)\big\|z^{\frac{1}{p}-\frac{1}{q}}\,\dddot{a}(z) f^*(z)\Big\|_{q,(y,\infty)}\Big\|_{r,(x,\infty)}\bigg\|_{s,(0,\infty)}\\&=\|f\|_{L^{\mathcal R,\mathcal R}_{p,(s,\dddot{c},r,\dddot{\b},q,\dddot{a})}}.
\end{align*}
\end{proof}

\subsection{${\mathcal L\mathcal R}$ and ${\mathcal R\mathcal L}$ spaces}\label{sub45}

\begin{thm}\label{corrollary33}
Let $0<\theta\leq 1$, $0<\varkappa<\infty$, $p=\frac{\varkappa}{1-\theta}$, $0<q, r, s\leq \infty$ and $a, \b, c\in SV$ with $\b$ satisfying $\big\|u^{-\frac{1}{r}}\b(u)\big\|_{r,(1,\infty)}<\infty$. Then,
$$(L_\varkappa,L_\infty)^{\mathcal L,\mathcal R}_{\theta,s,c,r,\b,q,a}=(L_{\varkappa,\infty},L_\infty)^{\mathcal L,\mathcal R}_{\theta,s,c,r,\b,q,a}=L^{\mathcal L,\mathcal R}_{p,(s,\dddot{c},r,\dddot{\b},q,\dddot{a})}.$$
\end{thm}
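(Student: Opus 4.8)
The plan is to deduce both inequalities from the two main Lemmas \ref{lemma16} and \ref{lemma17}, applied to the innermost $L_q$-norm over the interval $(t,u)$, and then to perform the dilations $y=u^\varkappa$ and $x=t^\varkappa$; these convert $\b,a,c$ into $\dddot\b,\dddot a,\dddot c$, cost only a multiplicative constant $\varkappa^{-1/r}$, resp.\ $\varkappa^{-1/s}$, and map $(t,\infty)\mapsto(x,\infty)$ and $(0,\infty)\mapsto(0,\infty)$. Throughout write $X=(A_0,L_\infty)^{\mathcal L,\mathcal R}_{\theta,s,c,r,\b,q,a}$ with $A_0=L_\varkappa$ or $A_0=L_{\varkappa,\infty}$, and recall $\frac{1-\theta}{\varkappa}=\frac1p$, so that the two choices of $A_0$ are handled at once.

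For the estimate ``$\gtrsim$'' I would apply Lemma \ref{lemma16} (its first estimate when $A_0=L_\varkappa$, and \eqref{e43} when $A_0=L_{\varkappa,\infty}$) with $T=t$, $S=u$, which gives for all $0<t<u$
\[\big\|v^{-\theta-\frac1q}a(v)K(v,f;A_0,L_\infty)\big\|_{q,(t,u)}\gtrsim\big\|v^{\frac1p-\frac1q}\dddot a(v)f^*(v)\big\|_{q,(t^\varkappa,u^\varkappa)}.\]
Substituting this into the definition of $\|f\|_X$ and then carrying out $y=u^\varkappa$ inside the $L_r$-norm and $x=t^\varkappa$ inside the $L_s$-norm yields precisely $\|f\|_X\gtrsim\|f\|_{L^{\mathcal L,\mathcal R}_{p,(s,\dddot c,r,\dddot\b,q,\dddot a)}}$.

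For ``$\lesssim$'' I would instead use Lemma \ref{lemma17} (its first estimate when $A_0=L_\varkappa$, and \eqref{ecP} when $A_0=L_{\varkappa,\infty}$) with $T=t$, $S=u$, which bounds the same inner norm by $t^{1-\theta}a(t)f^{**}_{(\varkappa)}(t)+\big\|v^{\frac1p-\frac1q}\dddot a(v)f^*(v)\big\|_{q,(t^\varkappa,u^\varkappa)}$. By the quasi-triangle inequality in $L_r$ and $L_s$ this gives $\|f\|_X\lesssim\mathrm I+\mathrm{II}$, where $\mathrm{II}$, coming from the second summand, satisfies $\mathrm{II}\approx\|f\|_{L^{\mathcal L,\mathcal R}_{p,(s,\dddot c,r,\dddot\b,q,\dddot a)}}$ by the same change of variables as above, and $\mathrm I$ comes from the $u$-independent factor $t^{1-\theta}a(t)f^{**}_{(\varkappa)}(t)$, so that the inner $L_r$-norm only produces the factor $\B(t):=\big\|u^{-\frac1r}\b(u)\big\|_{r,(t,\infty)}$ and $\mathrm I=\big\|t^{1-\theta-\frac1s}c(t)a(t)\B(t)f^{**}_{(\varkappa)}(t)\big\|_{s,(0,\infty)}$.

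The heart of the matter, and the only step I expect to be genuinely delicate, is absorbing $\mathrm I$ into the right-hand side; this is exactly where the assumption $\big\|u^{-\frac1r}\b(u)\big\|_{r,(1,\infty)}<\infty$ enters, since by Lemma \ref{lemma2}(iv) it makes $\B$, hence $ca\B$, slowly varying. Using \eqref{eKsigma} one has $t^{1-\theta}f^{**}_{(\varkappa)}(t)\approx t^{-\theta}K(t,f;L_\varkappa,L_\infty)$, whence $\mathrm I\approx\|f\|_{(L_\varkappa,L_\infty)_{\theta,s;ca\B}}$, which by Theorem \ref{corollary27} is $\approx\|f\|_{L_{p,s;\dddot c\,\dddot a\,\dddot\B}}$, where $\dddot\B(w):=\B(w^{1/\varkappa})$. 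A one-line substitution $z=u^\varkappa$ shows $\dddot\B(w)\approx\big\|z^{-\frac1r}\dddot\b(z)\big\|_{r,(w,\infty)}$, so $\dddot c\,\dddot a\,\dddot\B$ is equivalent to the weight $B$ appearing in the $\mathcal L,\mathcal R$-embedding of Lemma \ref{lemma23} (taken with the data $\dddot c,\dddot\b,\dddot a$); that embedding yields $\mathrm I\lesssim\|f\|_{L^{\mathcal L,\mathcal R}_{p,(s,\dddot c,r,\dddot\b,q,\dddot a)}}$, closing the upper bound. The couple $(L_{\varkappa,\infty},L_\infty)$ is handled verbatim, using the $L_{\varkappa,\infty}$-versions \eqref{e43} and \eqref{ecP} in place of the $L_\varkappa$-estimates; everything apart from the treatment of $\mathrm I$ is routine bookkeeping of slowly varying weights under the dilations $y=u^\varkappa$, $x=t^\varkappa$.
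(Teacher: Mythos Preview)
Your proposal is correct and follows essentially the same route as the paper: both use Lemma~\ref{lemma16} for the lower bound, Lemma~\ref{lemma17} to split the upper bound into two pieces, and then absorb the residual term coming from $t^{1-\theta}a(t)f^{**}_{(\varkappa)}(t)$. The only cosmetic difference is in that absorption step: the paper applies Lemma~\ref{lemma15} (to pass from $f^{**}_{(\varkappa)}$ to $f^*$) and Lemma~\ref{lemma3} (to rebuild the $\mathcal L,\mathcal R$-norm) by hand, whereas you invoke the already-packaged Theorem~\ref{corollary27} and the embedding from Lemma~\ref{lemma23}; since Theorem~\ref{corollary27} is derived from Lemma~\ref{lemma15} and Lemma~\ref{lemma23} from Lemma~\ref{lemma3}, the two arguments unfold into the same computation.
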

\begin{proof}
First note that the change of variables $y=u^\varkappa$ and $x=t^\varkappa$ shows that
\begin{align}
\|f\|_{L^{\mathcal L,\mathcal R}_{p,(s,\dddot{c},r,\dddot{\b},q,\dddot{a})}}&=\bigg\|x^{-\frac{1}{s}}\,\dddot{c}(x)\Big\|y^{-\frac{1}{r}}\,\dddot{\b}(y) \|z^{\frac{1}{p}-\frac{1}{q}}\,\dddot{a}(z)f^*(z)\|_{q,(x,y)}\Big\|_{r,(x,\infty)}\bigg\|_{s,(0,\infty)}\nonumber\\
&\approx\bigg\|t^{-\frac{1}{s}}c(t)\Big\|u^{-\frac{1}{r}}\b(u) \|z^{\frac{1}{p}-\frac{1}{q}}\, \dddot{a}(z)f^*(z)\|_{q,(t^\varkappa,u^\varkappa)}\Big\|_{r,(t,\infty)}\bigg\|_{s,(0,\infty)}.\label{ec15}
\end{align}
Put $X=(A_0,L_\infty)^{\mathcal L,\mathcal R}_{\theta,s,c,r,\b,q,a}$ where $A_0=L_\varkappa$ or $A_0=L_{\varkappa,\infty}$. By Lemma \ref{lemma16} and \eqref{ec15} we have that
\begin{align*}
\|f\|_{X}
&=\bigg\|t^{-\frac{1}{s}}c(t)\Big\|u^{-\frac{1}{r}}\b(u) \|v^{-\theta-\frac{1}{q}} a(v)K(v,f;A_0,L_\infty)\|_{q,(t,u)}\Big\|_{r,(t,\infty)}\bigg\|_{s,(0,\infty)}\\
&\gtrsim\bigg\|t^{-\frac{1}{s}}c(t)\Big\|u^{-\frac{1}{r}}\b(u) \|z^{\frac{1}{p}-\frac{1}{q}}\, \dddot{a}(z)f^*(z)\|_{q,(t^\varkappa,u^\varkappa)}\Big\|_{r,(t,\infty)}\bigg\|_{s,(0,\infty)}\approx\|f\|_{L^{\mathcal L,\mathcal R}_{p,(s,\dddot{c},r,\dddot{\b},q,\dddot{a})}}.
\end{align*}
Next, we proceed with the reverse estimate. Lemma \ref{lemma17}  implies that
\begin{align*}
\|f\|_{X}&\lesssim\bigg\|t^{-\frac{1}{s}}c(t)\Big\|u^{-\frac{1}{r}}\b(u) \Big(t^{\frac{\varkappa}{p}}a(t)f^{**}_{(\varkappa)}(t)+\|z^{\frac{1}{p}-\frac{1}{q}} \dddot{a}(z)f^*(z)\|_{q,(t^\varkappa,u^\varkappa)}\Big)\Big\|_{r,(t,\infty)}\bigg\|_{s,(0,\infty)}\\
&\leq \bigg\|t^{\frac{\varkappa}{p}-\frac{1}{s}}c(t)a(t)f^{**}_{(\varkappa)}(t)\Big\|u^{-\frac{1}{r}}\b(u) \Big\|_{r,(t,\infty)}\bigg\|_{s,(0,\infty)}\\
&+\bigg\|t^{-\frac{1}{s}}c(t)\Big\|u^{-\frac{1}{r}}\b(u) \|z^{\frac{1}{p}-\frac{1}{q}} \dddot{a}(z)f^*(z)\|_{q,(t^\varkappa,u^\varkappa)}\Big)\Big\|_{r,(t,\infty)}\bigg\|_{s,(0,\infty)}=S_1+S_2.
\end{align*}
By \eqref{ec15} we have that $S_2\approx\|f\|_{L^{\mathcal L,\mathcal R}_{p,(s,\dddot{c},r,\dddot{\b},q,\dddot{a})}}$, so to finish the proof it suffices to establish that $S_1\lesssim S_2$. Indeed, by Lemma \ref{lemma15} and Lemma \ref{lemma3}  and suitable change of variables, we have
\begin{align*}
S_1&=\bigg\|t^{\frac{\varkappa}{p}-\frac{1}{s}}c(t)a(t)f^{**}_{(\varkappa)}(t)\big\|u^{-\frac{1}{r}}\b(u) \big\|_{r,(t,\infty)}\bigg\|_{s,(0,\infty)}\\
&\approx\bigg\|x^{\frac{1}{p}-\frac{1}{s}}\,\dddot{c}(x)\dddot{a}(x)f^{*}(x)\big\|u^{-\frac{1}{r}}\b(u) \big\|_{r,(x^{\frac{1}{\varkappa}},\infty)}\bigg\|_{s,(0,\infty)}\\
&\approx\bigg\|x^{\frac{1}{p}-\frac{1}{s}}\,\dddot{c}(x)\dddot{a}(x)f^{*}(x)\big\|y^{-\frac{1}{r}}\dddot{\b}(y) \big\|_{r,(x,\infty)}\bigg\|_{s,(0,\infty)}\\
&\lesssim \bigg\|x^{-\frac{1}{s}}\,\dddot{c}(x)\big\|y^{-\frac{1}{r}}\dddot{\b}(y) \big\|_{r,(x,\infty)}\big\|z^{\frac{1}{p}-\frac{1}{q}}\,\dddot{a}(z)f^{*}(z)\big\|_{q,(x/2,x)}\bigg\|_{s,(0,\infty)}\\
&\leq \bigg\|x^{-\frac{1}{s}}\,\dddot{c}(x)\Big\|y^{-\frac{1}{r}}\dddot{\b}(y) \big\|z^{\frac{1}{p}-\frac{1}{q}}\,\dddot{a}(z)f^{*}(z)\big\|_{q,(x/2,y)}\Big\|_{r,(x,\infty)}\bigg\|_{s,(0,\infty)}\\
&\leq \bigg\|x^{-\frac{1}{s}}\,\dddot{c}(x)\Big\|y^{-\frac{1}{r}}\dddot{\b}(y) \big\|z^{\frac{1}{p}-\frac{1}{q}}\,\dddot{a}(z)f^{*}(z)\big\|_{q,(x/2,y)}\Big\|_{r,(x/2,\infty)}\bigg\|_{s,(0,\infty)}\\
&\approx \bigg\|x^{-\frac{1}{s}}\,\dddot{c}(x)\Big\|y^{-\frac{1}{r}}\dddot{\b}(y) \big\|z^{\frac{1}{p}-\frac{1}{q}}\,\dddot{a}(z)f^{*}(z)\big\|_{q,(x,y)}\Big\|_{r,(x,\infty)}\bigg\|_{s,(0,\infty)}=\|f\|_{L^{\mathcal L,\mathcal R}_{p,(s,\dddot{c},r,\dddot{\b},q,\dddot{a})}}.
\end{align*}
The proof is completed.
\end{proof}

\begin{thm}\label{corollary32}
Let $0<\theta\leq 1$, $0<\varkappa<\infty$, $p=\frac{\varkappa}{1-\theta}$, $0<q, r, s\leq \infty$ and $a, \b, c\in SV$. Then,
$$
(L_\varkappa,L_\infty)^{\mathcal R,\mathcal L}_{\theta,s,c,r,\b,q,a}=(L_{\varkappa,\infty},L_\infty)^{\mathcal R,\mathcal L}_{\theta,s,c,r,\b,q,a}=L^{\mathcal R,\mathcal L}_{p,(s,\dddot{c},r,\dddot{\b},q,\dddot{a})}.$$
\end{thm}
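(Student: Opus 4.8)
The plan is to follow the same two-sided strategy used in the proofs of Theorems \ref{corollary29}, \ref{corollary30} and \ref{corollary31}. Write $X=(A_0,L_\infty)^{\mathcal R,\mathcal L}_{\theta,s,c,r,\b,q,a}$, where $A_0=L_\varkappa$ or $A_0=L_{\varkappa,\infty}$; recall that
$$\|f\|_X=\bigg\|t^{-\frac{1}{s}}c(t)\Big\|u^{-\frac{1}{r}}\b(u)\,\|v^{-\theta-\frac{1}{q}}a(v)K(v,f;A_0,L_\infty)\|_{q,(u,t)}\Big\|_{r,(0,t)}\bigg\|_{s,(0,\infty)}.$$
First I would record, exactly as in the opening of the proof of Theorem \ref{corrollary33}, that the change of variables $y=u^\varkappa$, $x=t^\varkappa$ (and $z=v^\varkappa$ in the inner norm) gives
$$\|f\|_{L^{\mathcal R,\mathcal L}_{p,(s,\dddot c,r,\dddot\b,q,\dddot a)}}\approx\bigg\|t^{-\frac{1}{s}}c(t)\Big\|u^{-\frac{1}{r}}\b(u)\,\|z^{\frac{1}{p}-\frac{1}{q}}\dddot a(z)f^*(z)\|_{q,(u^\varkappa,t^\varkappa)}\Big\|_{r,(0,t)}\bigg\|_{s,(0,\infty)},$$
using $p=\varkappa/(1-\theta)$ so that $\frac{1-\theta}{\varkappa}=\frac1p$, together with $\dddot a\in SV$ from Lemma \ref{lemma2}(i) and the norm-power manipulations as in Theorem \ref{corollary31}. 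Since the couples $(L_\varkappa,L_\infty)$ and $(L_{\varkappa,\infty},L_\infty)$ are squeezed by $L_\varkappa\subset L_{\varkappa,\infty}$, it is enough to prove $\lesssim$ for $A_0=L_{\varkappa,\infty}$ and $\gtrsim$ for $A_0=L_\varkappa$; both sandwich expressions equal $\|f\|_{L^{\mathcal R,\mathcal L}_{p,(s,\dddot c,r,\dddot\b,q,\dddot a)}}$.

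For the lower bound I would invoke Lemma \ref{lemma16} with the inner $L_q$-norm taken over $(u,t)$: it yields
$$\|v^{-\theta-\frac{1}{q}}a(v)K(v,f;A_0,L_\infty)\|_{q,(u,t)}\gtrsim\|z^{\frac{1-\theta}{\varkappa}-\frac{1}{q}}\dddot a(z)f^*(z)\|_{q,(u^\varkappa,t^\varkappa)},$$
and after inserting this into the two outer norms and applying the displayed change of variables one gets $\|f\|_X\gtrsim\|f\|_{L^{\mathcal R,\mathcal L}_{p,(s,\dddot c,r,\dddot\b,q,\dddot a)}}$. This half is routine.

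The upper bound is the substantive part. Here I would apply Lemma \ref{lemma17} on the interval $(u,t)$, i.e. with $T=u$, $S=t$, to obtain
$$\|v^{-\theta-\frac{1}{q}}a(v)K(v,f;A_0,L_\infty)\|_{q,(u,t)}\lesssim u^{1-\theta}a(u)f^{**}_{(\varkappa)}(u)+\|z^{\frac{1}{p}-\frac{1}{q}}\dddot a(z)f^*(z)\|_{q,(u^\varkappa,t^\varkappa)}.$$
Plugging this in and splitting, $\|f\|_X\lesssim S_1+S_2$ where $S_2$ is, via the change-of-variables identity above, exactly $\approx\|f\|_{L^{\mathcal R,\mathcal L}_{p,(s,\dddot c,r,\dddot\b,q,\dddot a)}}$, and
$$S_1=\bigg\|t^{-\frac{1}{s}}c(t)\,\big\|u^{-\frac{1}{r}-\theta}\b(u)a(u)f^{**}_{(\varkappa)}(u)u\big\|_{r,(0,t)}\bigg\|_{s,(0,\infty)}
=\bigg\|t^{-\frac{1}{s}}c(t)\,\big\|u^{1-\theta-\frac{1}{r}}\b(u)a(u)f^{**}_{(\varkappa)}(u)\big\|_{r,(0,t)}\bigg\|_{s,(0,\infty)}.$$
So it remains to prove $S_1\lesssim S_2$. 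The natural route mirrors the $S_1\lesssim S_2$ argument of Theorem \ref{corrollary33}, but now the inner integrations run over $(0,t)$ rather than $(t,\infty)$, so instead of Corollary \ref{corollary6}/\ref{corollary7} I expect to need a Hardy-type estimate on bounded-below intervals of the $(0,t)$ shape — precisely the ``$(0,t)$'' bound built into Lemma \ref{lemma5}(i)–(ii) (with $\alpha+1=1-\theta\le 1$, hence $\alpha\le0$; the endpoint $\theta=1$, i.e. $\alpha=0$, being the delicate case covered by the extension in Lemma \ref{lemma5}). Concretely, using Lemma \ref{lemma15} to replace $f^{**}_{(\varkappa)}(u)$ inside the $L_r(0,t)$-norm by $\|z^{\frac1p-\frac1q}\dddot a(z)f^*(z)\|_{q,(0,u^\varkappa)}$, then the change of variables $x=t^\varkappa$, $y=u^\varkappa$ and Lemma \ref{lemma5} applied to the $L_r$-norm in the variable $y$ on $(0,x)$, one peels the inner $(0,u^\varkappa)$-norm down to $(u^\varkappa,t^\varkappa)$ at the cost of a controllable boundary term that is again dominated by $S_2$; a final application of Lemma \ref{lemma3} handles the leftover $f^*(x)$-type term. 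The main obstacle is organizing this Hardy reduction cleanly — keeping track of the nested $q$-, $r$-, $s$-norms and the $\varkappa$-power rescalings while ensuring the boundary terms produced at each step collapse back into $S_2$ — but no new analytic ingredient beyond Lemmas \ref{lemma3}, \ref{lemma5}, \ref{lemma15}, \ref{lemma16}, \ref{lemma17} is required.
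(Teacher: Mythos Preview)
Your overall framework is correct and matches the paper: the change-of-variables identity for $\|f\|_{L^{\mathcal R,\mathcal L}_{p,(s,\dddot c,r,\dddot\b,q,\dddot a)}}$, the lower bound via Lemma~\ref{lemma16}, and the upper bound via Lemma~\ref{lemma17} producing the split $S_1+S_2$ are exactly what the paper does (the paper calls them $T_1,T_2$).

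The gap is in your $S_1\lesssim S_2$ step. Your plan is to bound $u^{1-\theta}a(u)f^{**}_{(\varkappa)}(u)$ by $\|z^{1/p-1/q}\dddot a(z)f^*(z)\|_{q,(0,u^\varkappa)}$ and then invoke Lemma~\ref{lemma5} to ``peel'' the inner interval $(0,u^\varkappa)$ down to $(u^\varkappa,t^\varkappa)$. But these two intervals are disjoint, so no Hardy-type inequality can effect such a conversion: Lemma~\ref{lemma5} replaces $\int_0^u f$ by $uf(u)$ plus a boundary term, it does not move an integral from the left of $u$ to the right of $u$. Once you have produced the inner $(0,u^\varkappa)$ norm you are essentially comparing the $\mathcal L\mathcal L$-norm with the $\mathcal R\mathcal L$-norm, and there is no general inequality in that direction. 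Your remark about $\alpha+1=1-\theta$ and the ``delicate case $\alpha=0$'' also does not match any actual application of Lemma~\ref{lemma5} here; whatever Hardy step is in play has $\alpha=-\theta<0$, so there is no endpoint issue.

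The paper's route is shorter and avoids the detour entirely. It applies Lemma~\ref{lemma15} to the \emph{$r$-norm} (not the $q$-norm), i.e.\ with the role of $q$ in that lemma played by $r$, giving
$$\big\|u^{1-\theta-\frac{1}{r}}\b(u)a(u)f^{**}_{(\varkappa)}(u)\big\|_{r,(0,t)}\approx\big\|y^{\frac{1}{p}-\frac{1}{r}}\dddot\b(y)\dddot a(y)f^*(y)\big\|_{r,(0,t^\varkappa)},$$
so $T_1$ is a \emph{single} $r$-norm of $f^*$. A shift $y\to 2y$ and the third estimate of Lemma~\ref{lemma3},
$$y^{1/p}\dddot a(y)f^*(2y)\lesssim\|z^{1/p-1/q}\dddot a(z)f^*(z)\|_{q,(y,2y)},$$
manufacture the correct inner $q$-norm over $(y,2y)\subset(y,t^\varkappa)$ for $y<t^\varkappa/2$, and one lands on $T_2$ directly. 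No use of Lemma~\ref{lemma5} is needed at this stage.
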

\begin{proof}
In this case the changes of variables $y=u^\varkappa$ and $x=t^\varkappa$ show that
\begin{align}
\|f\|_{L^{\mathcal R,\mathcal L}_{p,(s,\dddot{c},r,\dddot{\b},q,\dddot{a})}}&=\bigg\|x^{-\frac{1}{s}}\dddot{c}(x)\Big\|y^{-\frac{1}{r}}\dddot{\b}(y) \|z^{\frac{1}{p}-\frac{1}{q}} \dddot{a}(z)f^*(z)\|_{q,(y,x)}\Big\|_{r,(0,x)}\bigg\|_{s,(0,\infty)}\nonumber\\
&\approx\bigg\|t^{-\frac{1}{s}}c(t)\Big\|u^{-\frac{1}{r}}\b(u) \|z^{\frac{1}{p}-\frac{1}{q}} \dddot{a}(z)f^*(z)\|_{q,(u^\varkappa,t^\varkappa)}\Big\|_{r,(0,t)}\bigg\|_{s,(0,\infty)}.\label{ec14}
\end{align}
Put $X=(A_0,L_\infty)^{\mathcal R,\mathcal L}_{\theta,s,c,r,\b,q,a}$, where $A_0=L_\varkappa$ or $A_0=L_{\varkappa,\infty}$. By Lemma \ref{lemma16} and \eqref{ec14} we have that
\begin{align*}
\|f\|_{X}&=\bigg\|t^{-\frac{1}{s}}c(t)\Big\|u^{-\frac{1}{r}}\b(u) \|v^{-\theta-\frac{1}{q}} a(v)K(v,f;A_0,L_\infty)\|_{q,(u,t)}\Big\|_{r,(0,t)}\bigg\|_{s,(0,\infty)}\\
&\gtrsim\bigg\|t^{-\frac{1}{s}}c(t)\Big\|u^{-\frac{1}{r}}\b(u) \|z^{\frac{1}{p}-\frac{1}{q}} \dddot{a}(z)f^*(z)\|_{q,(u^\varkappa,t^\varkappa)}\Big\|_{r,(0,t)}\bigg\|_{s,(0,\infty)}=\|f\|_{L^{\mathcal R,\mathcal L}_{p,(s,\dddot{c},r,\dddot{\b},q,\dddot{a})}}.
\end{align*}
Next, we prove the reverse estimate. Lemma \ref{lemma17} yields that
\begin{align*}
\|f\|_{X}&\lesssim\bigg\|t^{-\frac{1}{s}}c(t)\Big\|u^{-\frac{1}{r}}\b(u) \Big(u^{\frac{\varkappa}{p}}a(u)f^{**}_{(\varkappa)}(u)+\|z^{\frac{1}{p}-\frac{1}{q}} \dddot{a}(z)f^*(z)\|_{q,(u^\varkappa,t^\varkappa)}\Big)\Big\|_{r,(0,t)}\bigg\|_{s,(0,\infty)}\\
&\leq \bigg\|t^{-\frac{1}{s}}c(t)\Big\|u^{\frac{\varkappa}{p}-\frac{1}{r}}\b(u) a(u)f^{**}_{(\varkappa)}(u)\Big\|_{r,(0,t)}\bigg\|_{s,(0,\infty)}\\
&+\bigg\|t^{-\frac{1}{s}}c(t)\Big\|u^{-\frac{1}{r}}\b(u) \|z^{\frac{1}{p}-\frac{1}{q}} \dddot{a}(z)f^*(z)\|_{q,(u^\varkappa,t^\varkappa)}\Big)\Big\|_{r,(0,t)}\bigg\|_{s,(0,\infty)}=T_1+T_2.
\end{align*}
By \eqref{ec14} we have that $T_2\approx\|f\|_{L^{\mathcal R,\mathcal L}_{p,(s,\dddot{c},r,\dddot{\b},q,\dddot{a})}}$, so to finish the proof it is enough to obtain that $T_1\lesssim T_2$. Indeed, by Lemma \ref{lemma15} and Lemma \ref{lemma3} we have
\begin{align*}
T_1&:=\bigg\|t^{-\frac{1}{s}}c(t)\Big\|u^{\frac{\varkappa}{p}-\frac{1}{r}}\b(u) a(u)f^{**}_{(\varkappa)}(u)\Big\|_{r,(0,t)}\bigg\|_{s,(0,\infty)}\\
&\approx\bigg\|t^{-\frac{1}{s}}c(t)\Big\|y^{\frac{1}{p}-\frac{1}{r}}\dddot{\b}(y)\dddot{a}(y)f^{*}(y)\Big\|_{r,(0,t^\varkappa)}\bigg\|_{s,(0,\infty)}\\
&\approx\bigg\|t^{-\frac{1}{s}}c(t)\Big\|y^{\frac{1}{p}-\frac{1}{r}}\dddot{\b}(y)\dddot{a}(y)f^{*}(2y)\Big\|_{r,(0,t^\varkappa/2)}\bigg\|_{s,(0,\infty)}\\
&\lesssim\bigg\|t^{-\frac{1}{s}}c(t)\Big\|y^{-\frac{1}{r}}\dddot{\b}(y)\| z^{\frac{1}{p}-\frac{1}{q}}\dddot{a}(z)f^{*}(z)\|_{q,(y,2y)}\Big\|_{r,(0,t^\varkappa/2)}\bigg\|_{s,(0,\infty)}\\
&\leq\bigg\|t^{-\frac{1}{s}}c(t)\Big\|y^{-\frac{1}{r}}\dddot{\b}(y)\| z^{\frac{1}{p}-\frac{1}{q}}\dddot{a}(z)f^{*}(z)\|_{q,(y,t^\varkappa)}\Big\|_{r,(0,t^\varkappa/2)}\bigg\|_{s,(0,\infty)}\\
&\leq\bigg\|t^{-\frac{1}{s}}c(t)\Big\|y^{-\frac{1}{r}}\dddot{\b}(y)\| z^{\frac{1}{p}-\frac{1}{q}}\dddot{a}(z)f^{*}(z)\|_{q,(y,t^\varkappa)}\Big\|_{r,(0,t^\varkappa)}\bigg\|_{s,(0,\infty)}\\
&\approx\bigg\|x^{-\frac{1}{s}}\dddot{c}(x)\Big\|y^{-\frac{1}{r}}\dddot{\b}(y)\| z^{\frac{1}{p}-\frac{1}{q}}\dddot{a}(z)f^{*}(z)\|_{q,(y,x)}\Big\|_{r,(0,x)}\bigg\|_{s,(0,\infty)}=\|f\|_{L^{\mathcal R,\mathcal L}_{p,s,\dddot{c},r,\dddot{\b},q,\dddot{a}}}.
\end{align*}
This completes the proof.
\end{proof}

\section{Interpolation formulae for the couples $(L_1,L_\infty)$ and $(L_{1,\infty},L_\infty)$}\label{section6}

Finally, taking $\varkappa=1$ in the results of Section \ref{sectionsigma}, we are in position to formulate the main theorems of the paper. First, as a direct consequence of Lemma \ref{lemma15} we obtain the following corollary:

\begin{cor}\label{corollary34}
Let $0<\theta\leq1$, $p=\frac{1}{1-\theta}$, $0<q\leq \infty$ and $a\in SV$. Then, for all $f\in L_1+L_\infty$ and all $t>0$
$$\Big\|u^{-\theta-\frac{1}{q}}a(u)K(u,f;L_1,L_\infty)\Big\|_{q,(0,t)}=\Big\|u^{\frac{1}{p}-\frac{1}{q}}a(u)f^{**}(u)\Big\|_{q,(0,t)}\approx \Big\|v^{\frac{1}{p}-\frac{1}{q}}a(v)f^{*}(v)\Big\|_{q,(0,t)}$$
and for all $f\in L_{1,\infty}+L_\infty$ and all $t>0$
$$\Big\|u^{-\theta-\frac{1}{q}}a(u)K(u,f;L_{1,\infty},L_\infty)\Big\|_{q,(0,t)}\approx \Big\|v^{\frac{1}{p}-\frac{1}{q}}a(v)f^{*}(v)\Big\|_{q,(0,t)}.$$
\end{cor}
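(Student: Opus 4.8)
The plan is to obtain the corollary by specialising Lemma \ref{lemma15} to $\varkappa=1$ and then sharpening the first equivalence to an equality by means of Peetre's formula \eqref{eK}.

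First I would set $\varkappa=1$ in Lemma \ref{lemma15}. With this choice one has $\dddot{a}(v)=a(v)$, $t^{\varkappa}=t$ and $f^{**}_{(1)}=f^{**}$, and since $p=\frac{1}{1-\theta}$ the exponent becomes $\frac{1-\theta}{\varkappa}=1-\theta=\frac{1}{p}$. Hence Lemma \ref{lemma15} gives at once, for every $f\in L_1+L_\infty$ and every $t>0$,
$$\big\|u^{-\theta-\frac{1}{q}}a(u)K(u,f;L_1,L_\infty)\big\|_{q,(0,t)}\approx\big\|u^{\frac{1}{p}-\frac{1}{q}}a(u)f^{**}(u)\big\|_{q,(0,t)}\approx\big\|v^{\frac{1}{p}-\frac{1}{q}}a(v)f^{*}(v)\big\|_{q,(0,t)},$$
and, for $f\in L_{1,\infty}+L_\infty$, the second displayed relation in the statement. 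It only remains to replace the first $\approx$ by an equality.

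For that I would invoke Peetre's formula \eqref{eK}, $K(u,f;L_1,L_\infty)=uf^{**}(u)$ for all $u>0$. Multiplying by $u^{-\theta-\frac{1}{q}}a(u)$ and using $1-\theta=\frac{1}{p}$ yields the pointwise identity $u^{-\theta-\frac{1}{q}}a(u)K(u,f;L_1,L_\infty)=u^{\frac{1}{p}-\frac{1}{q}}a(u)f^{**}(u)$, so the two $L_q(0,t)$ quasi-norms are literally the same. If one prefers a self-contained argument for the remaining equivalence with $f^{*}$, the estimate ``$\gtrsim$'' is immediate from $f^{**}\ge f^{*}$ (cf.\ Lemma \ref{lemf}(ii) with $\varkappa=1$), while ``$\lesssim$'' is the Hardy-type inequality of Lemma \ref{lemma5} applied with $\alpha=\frac{1}{p}-1=-\theta<0$, $T=0$ and $S=t$; the boundary term vanishes since $\int_0^0 f^{*}=0$, and the monotonicity of $f^{*}$ is needed only in the range $0<q<1$.

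There is essentially no obstacle: the corollary is the $\varkappa=1$ instance of Lemma \ref{lemma15}. The one point worth a remark is the endpoint $\theta=1$, i.e.\ $p=\infty$, where the alternative Hardy argument uses $\alpha=-1$; this borderline exponent is precisely the one that Lemma \ref{lemma5} was extended to cover, so no extra care is needed. I would therefore present the proof as the two short paragraphs indicated above.
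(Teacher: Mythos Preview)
Your proposal is correct and matches the paper's approach exactly: the paper presents Corollary~\ref{corollary34} as ``a direct consequence of Lemma~\ref{lemma15}'' with $\varkappa=1$ and gives no further proof. Your additional observation that Peetre's formula \eqref{eK} upgrades the first $\approx$ to the exact equality stated in the corollary is a useful clarification that the paper leaves implicit.
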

Theorems \ref{corollary27} to \ref{corollary32}, Corollary \ref{corollary34} and Lemmas \ref{lemma24}, \ref{lemma25} imply the following results.

\begin{thm} (Cf. \cite{FS_2015}, \cite[Proposition 3]{Y_1969}, \cite[Theorem 3.15]{P_2021}).
Let $0<\theta\leq1$, $p=\frac{1}{1-\theta}$, $0<q\leq \infty$ and $a\in SV$. Then,
$$ (L_1,L_\infty)_{\theta,q;a}=(L_{1,\infty},L_\infty)_{\theta,q;a}=L_{(p,q;a)}=L_{p,q;a}.$$
\end{thm}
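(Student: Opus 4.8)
The plan is to obtain the displayed chain of equalities by assembling pieces already available in the excerpt, specialized to $\varkappa=1$. The final statement asserts
$$(L_1,L_\infty)_{\theta,q;a}=(L_{1,\infty},L_\infty)_{\theta,q;a}=L_{(p,q;a)}=L_{p,q;a},$$
with $p=\tfrac{1}{1-\theta}$. The first two equalities and the identification with $L_{p,q;a}$ are exactly Theorem \ref{corollary27} with $\varkappa=1$, since then $\dddot a=a$ and $p=\tfrac{\varkappa}{1-\theta}=\tfrac{1}{1-\theta}$. So the only genuinely new content is the middle identity $L_{(p,q;a)}=L_{p,q;a}$, i.e.\ that the $f^{**}$-quasinorm and the $f^*$-quasinorm are equivalent in this range $1\le p\le\infty$, $0<q\le\infty$.

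First I would invoke Lemma \ref{lemma24}, which gives $L_{(p,q;a)}=(L_1,L_\infty)_{\theta,q;a}$ for $\theta=1-\tfrac1p$. Combined with Theorem \ref{corollary27} at $\varkappa=1$ (whose conclusion reads $(L_1,L_\infty)_{\theta,q;a}=(L_{1,\infty},L_\infty)_{\theta,q;a}=L_{p,q;a}$), this immediately yields $L_{(p,q;a)}=(L_1,L_\infty)_{\theta,q;a}=(L_{1,\infty},L_\infty)_{\theta,q;a}=L_{p,q;a}$, which is the whole statement. Alternatively, and perhaps cleaner for exposition, one can bypass Lemma \ref{lemma24} and read off the identity directly from Corollary \ref{corollary34}: taking $t\to\infty$ there gives
$$\big\|u^{\frac1p-\frac1q}a(u)f^{**}(u)\big\|_{q,(0,\infty)}\approx\big\|v^{\frac1p-\frac1q}a(v)f^{*}(v)\big\|_{q,(0,\infty)},$$
which is precisely $\|f\|_{(p,q;a)}\approx\|f\|_{p,q;a}$, hence $L_{(p,q;a)}=L_{p,q;a}$; and the equality with the two $K$-functional spaces again comes from Corollary \ref{corollary34} (the left-hand sides, with $t\to\infty$, are the norms of $(L_1,L_\infty)_{\theta,q;a}$ and $(L_{1,\infty},L_\infty)_{\theta,q;a}$ respectively). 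Either route is short.

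There is essentially no obstacle here: the statement is a corollary, and all the analytic work — the Hardy-type inequalities of Lemma \ref{lemma5} and Corollaries \ref{corollary6}, \ref{corollary7}, the pointwise estimates of Lemma \ref{lemma17}, and the Kr\'ee-type $K$-functional formula \eqref{eKsigma} — has already been carried out in Sections \ref{preliminaries} and in the proof of Theorem \ref{corollary27}. The only point requiring a word of care is that passing from the local estimates in Corollary \ref{corollary34} (on intervals $(0,t)$) to the global identity requires letting $t\uparrow\infty$ and using the monotone convergence theorem for the $L_q$-quasinorms, together with the fact that the equivalence constants in Corollary \ref{corollary34} are independent of $t$; this is routine. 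I would therefore present the proof as a two-line deduction citing Theorem \ref{corollary27} (with $\varkappa=1$) and Lemma \ref{lemma24} (or Corollary \ref{corollary34}), noting that the cited references in the theorem's statement — \cite{FS_2015}, \cite[Proposition 3]{Y_1969}, \cite[Theorem 3.15]{P_2021} — recover the classical special cases.
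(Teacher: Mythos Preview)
Your proposal is correct and matches the paper's own approach: the theorem is stated as an immediate consequence of Theorem~\ref{corollary27} (specialized to $\varkappa=1$, so $\dddot a=a$), Lemma~\ref{lemma24}, and Corollary~\ref{corollary34}, with no additional argument given. Your alternative route via letting $t\to\infty$ in Corollary~\ref{corollary34} is equally valid and amounts to the same thing.
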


\begin{thm}\label{teo63}
Let $0<\theta\leq 1$, $p=\tfrac{1}{1-\theta}$, $0<q, r\leq \infty$ and $a, \b\in SV$. Then,
$$(L_1,L_\infty)^{\mathcal L}_{\theta,r,\b,q,a}=(L_{1,\infty},L_\infty)^{\mathcal L}_{\theta,r,\b,q,a}=(L)^{\mathcal L}_{p,r,\b,q,a}=L^{\mathcal L}_{p,r,\b,q,a}.$$
In particular, (Cf. \cite[Corollary 3.3]{FK_2004}),
$$(L_1,L_\infty)^{\mathcal L}_{\theta,r,\b,q,1}=(L_{1,\infty},L_\infty)^{\mathcal L}_{\theta,r,\b,q,1}=(L)^{(p,q,r}_{\b}=L^{(p,q,r}_{\b}.$$
\end{thm}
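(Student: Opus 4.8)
The plan is to obtain Theorem \ref{teo63} as the special case $\varkappa=1$ of Theorem \ref{corollary28}, combined with the $f^{**}$-side identifications already available. Concretely, since $\varkappa=1$ gives $\dddot{\b}=\b$ and $\dddot{a}=a$, and $p=\frac{\varkappa}{1-\theta}=\frac{1}{1-\theta}$, Theorem \ref{corollary28} reads
$$(L_1,L_\infty)^{\mathcal L}_{\theta,r,\b,q,a}=(L_{1,\infty},L_\infty)^{\mathcal L}_{\theta,r,\b,q,a}=L^{\mathcal L}_{p,r,\b,q,a},$$
which already yields the first and last members of the claimed chain of equalities. To insert $(L)^{\mathcal L}_{p,r,\b,q,a}$ in the middle, I would invoke Lemma \ref{lemma25} with $\varkappa=1$: it states $(L)^{\mathcal L}_{p,r,\b,q,a}=(L_1,L_\infty)^{\mathcal L}_{\theta,r,\b,q,a}$ precisely when $\theta=1-\frac1p$. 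Chaining these two facts gives
$$(L_1,L_\infty)^{\mathcal L}_{\theta,r,\b,q,a}=(L_{1,\infty},L_\infty)^{\mathcal L}_{\theta,r,\b,q,a}=(L)^{\mathcal L}_{p,r,\b,q,a}=L^{\mathcal L}_{p,r,\b,q,a},$$
which is exactly the assertion.

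For the "in particular" statement I would simply specialize $a\equiv 1$ and use the identifications from Remark \ref{remark21}, namely $L^{(p,q,r}_{\b}=L^{\mathcal L}_{p,r,\b,q,1}$ and $(L)^{(p,q,r}_{\b}=(L)^{\mathcal L}_{p,r,\b,q,1}$; substituting $a=1$ into the general chain then gives
$$(L_1,L_\infty)^{\mathcal L}_{\theta,r,\b,q,1}=(L_{1,\infty},L_\infty)^{\mathcal L}_{\theta,r,\b,q,1}=(L)^{(p,q,r}_{\b}=L^{(p,q,r}_{\b}.$$
Throughout one should keep in mind the standing assumption $\big\|t^{-1/r}\b(t)\big\|_{r,(1,\infty)}<\infty$ imposed on the $\mathcal L$-type spaces, so that none of the spaces in play is trivial and all the cited identities apply.

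Since Theorem \ref{corollary28} and Lemma \ref{lemma25} are already established, there is essentially no obstacle here: the proof is a one-line specialization plus bookkeeping of notation ($\dddot{\b}=\b$, $\dddot a=a$ when $\varkappa=1$). The only point requiring a little care is making sure the hypotheses of Lemma \ref{lemma25} are met, i.e. that the relation $\theta=1-\frac1p$ is exactly the one built into the statement of Theorem \ref{teo63} (it is, since $p=\frac{1}{1-\theta}$), and that $\varkappa=1$ indeed forces $A_0=L_1$ or $A_0=L_{1,\infty}$ as required. Hence the write-up amounts to: "Apply Theorem \ref{corollary28} with $\varkappa=1$, then Lemma \ref{lemma25}, and finally Remark \ref{remark21} for the case $a\equiv1$."
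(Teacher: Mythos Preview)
Your proposal is correct and matches the paper's own argument: the paper does not give a separate proof of Theorem~\ref{teo63} but simply states that it follows from Theorems~\ref{corollary27}--\ref{corollary32}, Corollary~\ref{corollary34} and Lemmas~\ref{lemma24}, \ref{lemma25}, which is exactly the specialization $\varkappa=1$ of Theorem~\ref{corollary28} together with Lemma~\ref{lemma25} (and Remark~\ref{remark21} for the particular case) that you describe.
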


\begin{thm}\label{teo64}
Let $0<\theta\leq 1$, $p=\frac{1}{1-\theta}$, $0<q, r\leq \infty$ and $a, \b\in SV$. Then,
$$(L_1,L_\infty)^{\mathcal R}_{\theta,r,\b,q,a}=(L_{1,\infty},L_\infty)^{\mathcal R}_{\theta,r,\b,q,a}=(L)^{\mathcal R}_{p,r,\b,q,a}=L^{\mathcal R}_{p,r,\b,q,a}.$$
In particular, (Cf. \cite[Theorem 4.2]{FK_2004}), 
$$(L_1,L_\infty)^{\mathcal R}_{\theta,r,\b,q,1}=(L_{1,\infty},L_\infty)^{\mathcal R}_{\theta,r,\b,q,1}=(L)^{p),q,r}_{\b}=L_{\b}^{p),q,r}.$$
\end{thm}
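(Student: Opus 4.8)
The plan is to read off Theorem~\ref{teo64} as the case $\varkappa=1$ of Theorem~\ref{corollary30}, glued to Lemma~\ref{lemma25}. When $\varkappa=1$ the rescaled weights trivialize, since $\dddot{\b}(t)=\b(t^{1/1})=\b(t)$ and likewise $\dddot{a}=a$, and the exponent becomes $p=\varkappa/(1-\theta)=1/(1-\theta)$, which for $\theta\in(0,1]$ ranges over $(1,\infty]\subset[1,\infty]$. Substituting $\varkappa=1$ into Theorem~\ref{corollary30} then gives at once
$$(L_1,L_\infty)^{\mathcal R}_{\theta,r,\b,q,a}=(L_{1,\infty},L_\infty)^{\mathcal R}_{\theta,r,\b,q,a}=L^{\mathcal R}_{p,r,\b,q,a},$$
which already accounts for three of the four claimed identities. (Throughout we assume the nontriviality condition $\big\|t^{-1/r}\b(t)\big\|_{r,(0,1)}<\infty$ attached to the $\mathcal R$-type spaces; if it fails, every space in the statement equals $\{0\}$ and there is nothing to prove.)

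To bring in the remaining space $(L)^{\mathcal R}_{p,r,\b,q,a}$, I would invoke Lemma~\ref{lemma25} with this same $p$ and with $\theta=1-\tfrac1p$, which yields $(L)^{\mathcal R}_{p,r,\b,q,a}=(L_1,L_\infty)^{\mathcal R}_{\theta,r,\b,q,a}$; this is legitimate precisely because $p\geq1$. Chaining this with the displayed equalities above closes the loop and produces the full statement
$$(L_1,L_\infty)^{\mathcal R}_{\theta,r,\b,q,a}=(L_{1,\infty},L_\infty)^{\mathcal R}_{\theta,r,\b,q,a}=(L)^{\mathcal R}_{p,r,\b,q,a}=L^{\mathcal R}_{p,r,\b,q,a}.$$

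For the ``in particular'' assertion I would specialize $a\equiv1$ (so $\dddot a\equiv1$ as well), use Remark~\ref{remark21} to rewrite $L^{\mathcal R}_{p,r,\b,q,1}=L^{p),q,r}_{\b}$ and $(L)^{\mathcal R}_{p,r,\b,q,1}=(L)^{p),q,r}_{\b}$, and apply the ``in particular'' clause of Lemma~\ref{lemma25} on the interpolation side. Since Theorem~\ref{corollary30} and Lemma~\ref{lemma25} are already established, there is no genuine obstacle left here: the substantive work lay earlier, in the $\mathcal R$-direction of Theorem~\ref{corollary30}, whose upper bound rests on the Hardy-type estimate of Corollary~\ref{corollary6} applied after the change of variables $v^\varkappa=y$, $t^\varkappa=x$. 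The only points demanding care in the present deduction are the elementary parameter bookkeeping, namely that $\varkappa=1$ forces $\dddot{\b}=\b$ and $\dddot a=a$, and that $p=1/(1-\theta)\in[1,\infty]$ so that Lemma~\ref{lemma25} is applicable.
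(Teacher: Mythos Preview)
Your proposal is correct and follows exactly the route the paper takes: the statement is recorded as an immediate consequence of Theorem~\ref{corollary30} with $\varkappa=1$ (so $\dddot a=a$, $\dddot\b=\b$) together with Lemma~\ref{lemma25}, and the ``in particular'' is the specialization $a\equiv1$ via Remark~\ref{remark21}. There is nothing to add.
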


\begin{thm}
Let $0<\theta\leq 1$, $p=\frac{1}{1-\theta}$, $0<q, r, s\leq \infty$ and $a, b, c\in SV$. Then,
$$(L_1,L_\infty)^{\mathcal L,\mathcal L}_{\theta,s,c,r,\b,q,a}=(L_{1,\infty},L_\infty)^{\mathcal L,\mathcal L}_{\theta,s,c,r,\b,q,a}=(L)^{\mathcal L,\mathcal L}_{p,(s,c,r,\b,q,a)}=L^{\mathcal L,\mathcal L}_{p,(s,c,r,\b,q,a)}$$
and
$$(L_1,L_\infty)^{\mathcal R,\mathcal R}_{\theta,s,c,r,\b,q,a}=(L_{1,\infty},L_\infty)^{\mathcal R,\mathcal R}_{\theta,s,c,r,\b,q,a}=(L)^{\mathcal R,\mathcal R}_{p,(s,c,r,\b,q,a)}=L^{\mathcal R,\mathcal R}_{p,(s,c,r,\b,q,a)}.$$
\end{thm}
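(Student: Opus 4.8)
The plan is to specialize the general interpolation formulae proved in Section~\ref{sectionsigma} to the case $\varkappa=1$. Recall that when $\varkappa=1$ we have $\dddot{\b}(t)=\b(t^{1})=\b(t)$, and likewise $\dddot{a}=a$, $\dddot{c}=c$, so all the ``triple-dot'' rearrangements disappear and the parameter identification $p=\frac{\varkappa}{1-\theta}=\frac{1}{1-\theta}$ matches the hypothesis. Thus Theorem~\ref{corollary29} with $\varkappa=1$ gives directly
$$(L_1,L_\infty)^{\mathcal L,\mathcal L}_{\theta,s,c,r,\b,q,a}=(L_{1,\infty},L_\infty)^{\mathcal L,\mathcal L}_{\theta,s,c,r,\b,q,a}=L^{\mathcal L,\mathcal L}_{p,(s,c,r,\b,q,a)},$$
and Theorem~\ref{corollary31} with $\varkappa=1$ gives the analogous chain with $\mathcal{R},\mathcal{R}$ in place of $\mathcal{L},\mathcal{L}$ and $L^{\mathcal R,\mathcal R}_{p,(s,c,r,\b,q,a)}$ on the right. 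This supplies the first two equalities and the last equality in each displayed chain.

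Next I would fill in the remaining equality, namely that the $f^{**}$-defined space coincides with the $K$-functional interpolation space. For this I invoke Lemma~\ref{lemma26}, which asserts precisely $(L)_{p,(s,c,r,\b,q,a)}^{\mathcal L,\mathcal L}=(L_1,L_\infty)^{\mathcal L,\mathcal L}_{\theta,s,c,r,\b,q,a}$ and $(L)_{p,(s,c,r,\b,q,a)}^{\mathcal R,\mathcal R}=(L_1,L_\infty)^{\mathcal R,\mathcal R}_{\theta,s,c,r,\b,q,a}$ for $\theta=1-\frac1p$. Combining this with the chains above then yields
$$(L)^{\mathcal L,\mathcal L}_{p,(s,c,r,\b,q,a)}=(L_1,L_\infty)^{\mathcal L,\mathcal L}_{\theta,s,c,r,\b,q,a}=L^{\mathcal L,\mathcal L}_{p,(s,c,r,\b,q,a)}$$
and similarly for the $\mathcal{R},\mathcal{R}$ case, so that all four spaces in each statement are identified.

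The only point requiring care is that the previous theorems are stated with $0<\theta\le 1$ and involve the couple $(L_\varkappa,L_\infty)$; for $\varkappa=1$ both couples in the statement, $(L_1,L_\infty)$ and $(L_{1,\infty},L_\infty)$, are precisely the ones handled there, and the case $\theta=1$ (i.e.\ $p=\infty$) is included. One should also note that the hypothesis $\|t^{-1/r}\b(t)\|_{r,(1,\infty)}<\infty$ needed for the nontriviality of $\mathcal{L},\mathcal{L}$-type spaces, and the parallel condition for $\mathcal{R},\mathcal{R}$-type spaces, is in force under the blanket assumption made after Definition~\ref{defLRR^} that all spaces under consideration are nontrivial. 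Since both constituent steps are already fully established, there is essentially no obstacle here: the proof is a two-line deduction, and the only thing to double-check is the bookkeeping of the parameter lists and the coincidence $\dddot{\b}=\b$, $\dddot{a}=a$, $\dddot{c}=c$ when $\varkappa=1$.
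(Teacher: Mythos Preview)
Your proposal is correct and follows exactly the paper's approach: the paper states these theorems as immediate consequences of Theorems~\ref{corollary27}--\ref{corollary32} specialized to $\varkappa=1$ together with the identification lemmas (Lemmas~\ref{lemma24}--\ref{lemma26}), and you have correctly singled out Theorem~\ref{corollary29}, Theorem~\ref{corollary31}, and Lemma~\ref{lemma26} as the relevant ingredients. The bookkeeping you note (that $\dddot{a}=a$, $\dddot{\b}=\b$, $\dddot{c}=c$ when $\varkappa=1$, and that $p=\frac{1}{1-\theta}$) is precisely what is needed.
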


\begin{thm}
Let $0<\theta\leq 1$, $p=\tfrac{1}{1-\theta}$, $0<q, r, s\leq \infty$ and $a, b, c\in SV$. Then,
$$(L_1,L_\infty)^{\mathcal R,\mathcal L}_{\theta,s,c,r,\b,q,a}=(L_{1,\infty},L_\infty)^{\mathcal R,\mathcal L}_{\theta,s,c,r,\b,q,a}=(L)^{\mathcal R,\mathcal L}_{p,(s,c,r,\b,q,a)}=L^{\mathcal R,\mathcal L}_{p,(s,c,r,\b,q,a)}$$
and
$$(L_1,L_\infty)^{\mathcal L,\mathcal R}_{\theta,s,c,r,\b,q,a}=(L_{1,\infty},L_\infty)^{\mathcal L,\mathcal R}_{\theta,s,c,r,\b,q,a}=(L)^{\mathcal L,\mathcal R}_{p,(s,c,r,\b,q,a)}=L^{\mathcal L,\mathcal R}_{p,(s,c,r,\b,q,a)}.$$
\end{thm}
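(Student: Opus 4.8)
The plan is to deduce this theorem as the case $\varkappa=1$ of Theorems \ref{corollary32} and \ref{corrollary33}, together with the identification of the $(L)$-spaces as interpolation spaces of the couple $(L_1,L_\infty)$ that underlies Lemma \ref{lemma26}.

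First I would record the trivial but essential fact that for $\varkappa=1$ one has $\dddot{a}(t)=a(t^{1})=a(t)$, and likewise $\dddot{\b}=\b$, $\dddot{c}=c$, while $p=\varkappa/(1-\theta)=1/(1-\theta)$; hence the spaces $L^{\mathcal R,\mathcal L}_{p,(s,\dddot{c},r,\dddot{\b},q,\dddot{a})}$ and $L^{\mathcal L,\mathcal R}_{p,(s,\dddot{c},r,\dddot{\b},q,\dddot{a})}$ appearing in Theorems \ref{corollary32} and \ref{corrollary33} for $\varkappa=1$ are exactly $L^{\mathcal R,\mathcal L}_{p,(s,c,r,\b,q,a)}$ and $L^{\mathcal L,\mathcal R}_{p,(s,c,r,\b,q,a)}$. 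With this, Theorem \ref{corollary32} with $\varkappa=1$ gives
$$(L_1,L_\infty)^{\mathcal R,\mathcal L}_{\theta,s,c,r,\b,q,a}=(L_{1,\infty},L_\infty)^{\mathcal R,\mathcal L}_{\theta,s,c,r,\b,q,a}=L^{\mathcal R,\mathcal L}_{p,(s,c,r,\b,q,a)},$$
and Theorem \ref{corrollary33} with $\varkappa=1$ gives
$$(L_1,L_\infty)^{\mathcal L,\mathcal R}_{\theta,s,c,r,\b,q,a}=(L_{1,\infty},L_\infty)^{\mathcal L,\mathcal R}_{\theta,s,c,r,\b,q,a}=L^{\mathcal L,\mathcal R}_{p,(s,c,r,\b,q,a)}.$$
Here one notes that the extra hypothesis $\big\|u^{-\frac1r}\b(u)\big\|_{r,(1,\infty)}<\infty$ required by Theorem \ref{corrollary33} is precisely the standing non-triviality assumption imposed on the $\mathcal{LR}$ spaces (otherwise $\overline{A}^{\mathcal L,\mathcal R}_{\theta,s,c,r,\b,q,a}=\{0\}$), so it is automatic in the present setting.

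It then remains only to splice in the two $(L)$-spaces. For this I would use Peetre's formula \eqref{eK}, $K(v,f;L_1,L_\infty)=vf^{**}(v)$: inserting it into the quasi-norms of $(L_1,L_\infty)^{\mathcal R,\mathcal L}_{\theta,s,c,r,\b,q,a}$ and $(L_1,L_\infty)^{\mathcal L,\mathcal R}_{\theta,s,c,r,\b,q,a}$ and using $\theta=1-\frac1p$ replaces the integrand $v^{-\theta-\frac1q}a(v)K(v,f)$ by $v^{\frac1p-\frac1q}a(v)f^{**}(v)$, which is exactly the integrand defining $(L)^{\mathcal R,\mathcal L}_{p,(s,c,r,\b,q,a)}$, resp. $(L)^{\mathcal L,\mathcal R}_{p,(s,c,r,\b,q,a)}$. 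This is the same computation that yields Lemma \ref{lemma26}; its $\mathcal{RL}$ and $\mathcal{LR}$ analogues are obtained verbatim and give $(L)^{\mathcal R,\mathcal L}_{p,(s,c,r,\b,q,a)}=(L_1,L_\infty)^{\mathcal R,\mathcal L}_{\theta,s,c,r,\b,q,a}$ and $(L)^{\mathcal L,\mathcal R}_{p,(s,c,r,\b,q,a)}=(L_1,L_\infty)^{\mathcal L,\mathcal R}_{\theta,s,c,r,\b,q,a}$. Chaining these with the two displays above proves both asserted chains of identities.

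There is no genuine obstacle at this level: all the real analysis --- in particular the delicate absorption of the boundary term $T^{1-\theta}\b(T)f^{**}_{(\varkappa)}(T)$ coming from Lemma \ref{lemma17}, handled through the estimates $T_1\lesssim T_2$ and $S_1\lesssim S_2$ --- has already been carried out inside Theorems \ref{corollary32} and \ref{corrollary33}. The only points deserving a sentence of care are verifying that the $\varkappa=1$ specialization genuinely erases all the triple-dot decorations, and making explicit the triviality hypothesis on $\b$ in the $\mathcal{LR}$ case as noted above. If a fully self-contained proof were wanted, one could simply replace the appeal to Lemma \ref{lemma26} by the one-line Peetre-formula substitution just described.
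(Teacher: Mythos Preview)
Your proposal is correct and follows exactly the paper's approach: the theorem is obtained by specializing Theorems \ref{corrollary33} and \ref{corollary32} to $\varkappa=1$ (so that $\dddot a=a$, $\dddot\b=\b$, $\dddot c=c$ and $p=1/(1-\theta)$), together with the Peetre-formula identification underlying Lemma \ref{lemma26}. Your remark that the $\mathcal{RL}$ and $\mathcal{LR}$ cases of Lemma \ref{lemma26} follow by the same one-line substitution is apt, since the lemma as printed only lists the $\mathcal{LL}$ and $\mathcal{RR}$ cases; and your observation about the non-triviality hypothesis on $\b$ in the $\mathcal{LR}$ case is precisely the standing assumption adopted in the paper.
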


\section{Applications}\label{applications}
Here, we demostrate how our general assertions can be used to establish interpolation results for the grand and small Lorentz-Karamata spaces. For the sake of shortness, we present only one example.

\begin{cor}(Cf. \cite[Theorem 4.6]{FS_2021A}.)
Let $0<p_0<\infty$, $0<r, r_0, q_0\leq \infty$ and $a, a_0, \b_0\in SV$, with $\b_0$ satisfying $\|u^{-1/r_0}\b_0(u)\|_{r_0,(0,1)}<\infty$. Let $\chi(t)=a_0(t)\|u^{-1/r_0}\b_0(u)\|_{r_0,(0,t)}$ and $\rho(t)=t^{1/p_0}\chi(t)$, $t>0$.
\begin{itemize}
\item[(i)] If $\|u^{-1/r}a(u)\|_{r,(1,\infty)}<\infty$, then
$$\big(L_{p_0,r_0,\b_0,q_0,a_0}^{\mathcal R},L_\infty\big)_{0,r;a}=L_{p_0,(r,a\circ\rho,r_0,\b_0,q_0,a_0)}^{\mathcal R,\mathcal L}.$$
\item[(ii)] If $0<\theta<1$, then
$$\big(L_{p_0,r_0,\b_0,q_0,a_0}^{\mathcal R},L_\infty\big)_{\theta,r;a}=L_{p,r;a^\#},$$
where $\tfrac{1}{p}=\tfrac{1-\theta}{p_0}$ and $a^{\#}=\chi^{1-\theta}a\circ\rho$.
\item[(iii)] If $\|u^{-1/r}a(u)\|_{r,(0,1)}<\infty$, then
$$\big(L_{p_0,r_0,\b_0,q_0,a_0}^{\mathcal R},L_\infty\big)_{1,r;a}=L_{\infty,r;a\circ\rho}.$$
\end{itemize}
\end{cor}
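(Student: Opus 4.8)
\emph{The plan} is to reduce all three identities to interpolation formulae for a couple of the form $(L_\varkappa,L_\infty)$ that are already settled in Section~\ref{sectionsigma}, using a reiteration theorem for the $\mathcal R$-method composed with the classical real method as the link. Fix an auxiliary parameter $\theta_0\in(0,1)$, put $\varkappa:=p_0(1-\theta_0)$ and $\b(v):=\b_0(v^{\varkappa})$, $\alpha(v):=a_0(v^{\varkappa})$ (both in $SV$ by Lemma~\ref{lemma2}(i)); then $\dddot{\b}=\b_0$, $\dddot{\alpha}=a_0$, $p_0=\varkappa/(1-\theta_0)$, and Theorem~\ref{corollary30} gives $L_{p_0,r_0,\b_0,q_0,a_0}^{\mathcal R}=(L_\varkappa,L_\infty)_{\theta_0,r_0,\b,q_0,\alpha}^{\mathcal R}$. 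The standing assumption $\|u^{-1/r_0}\b_0(u)\|_{r_0,(0,1)}<\infty$ is precisely the non-triviality condition for this space and it also makes $\chi$, hence $\rho$, finite; since $\chi\in SV$ by Lemma~\ref{lemma2}(iv), the functions $a\circ\rho=a(t^{1/p_0}\chi(t))$, $\chi^{1-\theta}$ and $a^{\#}=\chi^{1-\theta}(a\circ\rho)$ all belong to $SV$ by Lemma~\ref{lemma2}(i).

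\emph{Reiteration.} Next I would invoke the reiteration formula for $\big((L_\varkappa,L_\infty)_{\theta_0,r_0,\b,q_0,\alpha}^{\mathcal R},L_\infty\big)_{\theta,r;a}$ (cf. \cite[Theorem~4.6]{FS_2021A}; see also \cite{DFS_2022}). Its crucial input is that the fundamental function of $(L_\varkappa,L_\infty)_{\theta_0,r_0,\b,q_0,\alpha}^{\mathcal R}=L_{p_0,r_0,\b_0,q_0,a_0}^{\mathcal R}$ is equivalent to $\rho(t)=t^{1/p_0}\chi(t)$; one checks this by plugging $\chi_{(0,s)}^{*}=\chi_{(0,s)}$ into the quasi-norm \eqref{eLKL} with $f^{*}$ in place of $f^{**}$ and applying Lemma~\ref{lemma2}(iii),(v). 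The reiteration then yields, for $\theta=0$ under $\|u^{-1/r}a(u)\|_{r,(1,\infty)}<\infty$, the $\mathcal{RL}$ space $(L_\varkappa,L_\infty)_{\theta_0,r,c,r_0,\b,q_0,\alpha}^{\mathcal R,\mathcal L}$ with $c\in SV$ determined by $\dddot c=a\circ\rho$; for $0<\theta<1$, the classical space $(L_\varkappa,L_\infty)_{\theta_1,r;c}$ with $\theta_1:=\theta_0+\theta(1-\theta_0)$ and $\dddot c=a^{\#}$; and for $\theta=1$ under $\|u^{-1/r}a(u)\|_{r,(0,1)}<\infty$, the space $(L_\varkappa,L_\infty)_{1,r;c}$ with $\dddot c=a\circ\rho$.

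\emph{Translating back.} Finally I would apply the identities of Section~\ref{sectionsigma}. Theorem~\ref{corollary32} converts the $\mathcal{RL}$ space of case (i) into $L_{p_0,(r,\dddot c,r_0,\dddot{\b},q_0,\dddot{\alpha})}^{\mathcal R,\mathcal L}=L_{p_0,(r,a\circ\rho,r_0,\b_0,q_0,a_0)}^{\mathcal R,\mathcal L}$, and Theorem~\ref{corollary27} converts the classical spaces of cases (ii) and (iii) into $L_{p,r;\dddot c}$, where $1/p=(1-\theta_1)/\varkappa=(1-\theta)/p_0$; this reads $L_{p,r;a^{\#}}$ when $0<\theta<1$ and $L_{\infty,r;a\circ\rho}$ when $\theta=1$ (since then $\theta_1=1$, $p=\infty$). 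That the answer does not depend on the auxiliary choice of $\theta_0$ is automatic, since $L_{p_0,r_0,\b_0,q_0,a_0}^{\mathcal R}$ is a single fixed space.

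\emph{Main obstacle.} The genuinely delicate point is the reiteration step: pinning down the precise slowly varying weight it produces and recognising it as $a$ composed with the fundamental function $\rho$. If \cite[Theorem~4.6]{FS_2021A} is not stated in exactly the form required, I would reprove it starting from $K(t,f;L_{p_0,r_0,\b_0,q_0,a_0}^{\mathcal R},L_\infty)\approx\inf_{\lambda\geq0}\big(\|(f^{*}-\lambda)_+\|_{L_{p_0,r_0,\b_0,q_0,a_0}^{\mathcal R}}+t\lambda\big)$, splitting into the three ranges of $\theta$ prescribed by the non-triviality conditions and using the Hardy-type inequalities of Section~\ref{preliminaries} together with Theorems~\ref{corollary27} and \ref{corollary32}; the remaining manipulations are routine once the substitution $u\mapsto u^{1/\varkappa}$ hidden in $\dddot{(\cdot)}$ is tracked carefully.
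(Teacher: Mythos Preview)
Your proposal is correct and follows essentially the same route as the paper: represent $L_{p_0,r_0,\b_0,q_0,a_0}^{\mathcal R}$ as $(L_\varkappa,L_\infty)^{\mathcal R}_{\theta_0,r_0,\b_0(t^\varkappa),q_0,a_0(t^\varkappa)}$ via Theorem~\ref{corollary30}, apply an abstract reiteration theorem for $\big((A_0,A_1)^{\mathcal R}_{\theta_0,\dots},A_1\big)_{\theta,r;a}$, and then convert back with Theorems~\ref{corollary27} and~\ref{corollary32}. Two small points of divergence: the paper fixes $0<\varkappa<\min(p_0,q_0)$ (not just $\varkappa<p_0$) and, for the reiteration step, it invokes \cite[Theorem~23]{D_2020B} for $0<\theta\le1$ and \cite[Theorem~28]{D_2021A} for $\theta=0$ rather than \cite[Theorem~4.6]{FS_2021A}; citing the latter is a bit self-referential since the corollary is precisely meant to recover/compare with that result, so you should point to the abstract reiteration theorems instead.
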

\begin{proof}
Choose some $\varkappa$ such that $0<\varkappa<\min(p_0,q_0)$. Put $\theta_0=1-\frac{\varkappa}{p_0}$ and $X=\big(L_{p_0,r_0,\b_0,q_0,a_0}^{\mathcal R},L_\infty\big)_{\theta,r;a}$. By Theorem \ref{corollary30} we know that
$$L^{\mathcal R}_{p_0,r_0,\b_0,q_0,a_0}=(L_\varkappa,L_\infty)^{\mathcal R}_{\theta_0,r_0,\b_0(t^\varkappa),q_0,a_0(t^\varkappa)}.$$
Thus,
$$ X=\big((L_\varkappa,L_\infty)^{\mathcal R}_{\theta_0,r_0,\b_0(t^\varkappa),q_0,a_0(t^\varkappa)},L_\infty\Big)_{\theta,r;a}.$$
Case $0<\theta\leq 1$. By \cite[Theorem 23]{D_2020B} and Theorem \ref{corollary27}, we have
$$X=(L_\varkappa,L_\infty)_{\eta,r;y}=L_{p,r;x},$$
where $\eta=(1-\theta)\theta_0+\theta$, $\frac{1}{p}=\frac{1-\eta}{\varkappa}$,
$$y(t)=\Big(a_0(t^\varkappa)\|u^{-1/r_0}\b_0(u^\varkappa)\|_{r_0,(0,t)}\Big)^{1-\theta} a\Big(t^{1-\theta_0}a_0(t^{\varkappa})\|u^{-1/r_0}\b_0(u^\varkappa)\|_{r_0,(0,t)}\Big)$$
and 
$$x(t)=y(t^{\frac{1}{\varkappa}})=\Big(a_0(t)\|u^{-1/r_0}\b_0(u^\varkappa)\|_{r_0,(0,t^{\frac{1}{\varkappa}})}\Big)^{1-\theta} a\Big(t^{\frac{1-\theta_0}{\varkappa}}a_0(t)\|u^{-1/r_0}\b_0(u^\varkappa)\|_{r_0,(0,t^{\frac{1}{\varkappa}})}\Big).$$
Note that $\frac{1-\theta_0}{\varkappa}=\frac{1}{p_0}$, $\frac{1-\eta}{\varkappa}=\frac{1-\theta}{p_0}$, and $a_0(t)\|u^{-1/r_0}\b_0(u^\varkappa)\|_{r_0,(0,t^{\frac{1}{\varkappa}})}\approx\chi(t)$. Thus, $\frac{1}{p}=\frac{1-\theta}{p_0}$ and $x(t)\approx \chi(t)^{1-\theta}a\big(t^{1/p_0}\chi(t)\big)= a^\#(t)$.

Case $\theta=0$. By \cite[Theorem 28]{D_2021A} and Theorem \ref{corollary32}, we arrive at
$$X=(L_\varkappa,L_\infty)^{\mathcal R,\mathcal L}_{\theta_0,r,x,r_0,\b_0(t^\varkappa),q_0,a_0(t^\varkappa)}=
L^{\mathcal R,\mathcal L}_{p_0,(r,\overline{x},r_0,\b_0,q_0,a_0)}$$
where
$$x(t)=a\Big(t^{1-\theta_0}a_0(t^{\varkappa})\|u^{-1/r_0}\b_0(u^\varkappa)\|_{r_0,(0,t)}\Big)$$
and hence,
$$\dddot{x}(t)=a\Big(t^{\frac{1-\theta_0}{\varkappa}}a_0(t)\|u^{-1/r_0}\b_0(u^\varkappa)\|_{r_0,(0,t^{1/\varkappa})}\Big)\approx a\big(t^{\frac{1}{p_0}}\chi(t)\big)=(a\circ\rho)(t).$$
This completes the proof.
\end{proof}

In view of Remark \ref{remark21}, we finish with the following statement.

\begin{cor}\label{cor72}
(Cf. \cite[Corollary 5.9]{FS_2021A}, \cite[Corollary 62]{D_2021A}.)
Let $0<p_0<\infty$, $0<r, r_0, q_0\leq \infty$ and $a, \b_0\in SV$, with $\b_0$ satisfying $\|u^{-1/r_0}\b_0(u)\|_{r_0,(0,1)}<\infty$. Let $\chi(t)=\|u^{-1/r_0}\b_0(u)\|_{r_0,(0,t)}$ and $\rho(t)=t^{1/p_0}\chi(t)$, $t>0$.
\begin{itemize}
\item[(i)] If $\|u^{-1/r}a(u)\|_{r,(1,\infty)}<\infty$, then
$$\big(L^{p_0),q_0,r_0}_{\b_0},L_\infty\big)_{0,r;a}=L_{p_0,(r,a\circ\rho,r_0,\b_0,q_0,1)}^{\mathcal R,\mathcal L}.$$
\item[(ii)] If $0<\theta<1$, then
$$\big(L^{p_0),q_0,r_0}_{\b_0},L_\infty\big)_{\theta,r;a}=L_{p,r;a^\#},$$
where $\tfrac{1}{p}=\tfrac{1-\theta}{p_0}$ and $a^{\#}=\chi^{1-\theta}a\circ\rho$.
\item[(iii)] If $\|u^{-1/r}a(u)\|_{r,(0,1)}<\infty$, then
$$\big(L^{p_0),q_0,r_0}_{\b_0},L_\infty\big)_{1,r;a}=L_{\infty,r;a\circ\rho}.$$
\end{itemize}
\end{cor}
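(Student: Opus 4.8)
The plan is to reduce Corollary~\ref{cor72} to the previous corollary by putting $a_0\equiv 1$ in its hypotheses, since then the grand Lorentz--Karamata space $L^{p_0),q_0,r_0}_{\b_0}$ coincides with $L^{\mathcal R}_{p_0,r_0,\b_0,q_0,1}$ by Remark~\ref{remark21}. With $a_0\equiv 1$ the function $\chi(t)=a_0(t)\|u^{-1/r_0}\b_0(u)\|_{r_0,(0,t)}$ of the preceding corollary becomes exactly $\chi(t)=\|u^{-1/r_0}\b_0(u)\|_{r_0,(0,t)}$, and $\rho(t)=t^{1/p_0}\chi(t)$ is unchanged, so the three conclusions specialize verbatim to the three conclusions stated here. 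In other words, once one observes $L^{p_0),q_0,r_0}_{\b_0}=L^{\mathcal R}_{p_0,r_0,\b_0,q_0,1}$, all three parts are immediate instances of parts (i), (ii), (iii) of the previous corollary.

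Concretely, first I would invoke Remark~\ref{remark21} to rewrite the left-hand couple as $\big(L^{\mathcal R}_{p_0,r_0,\b_0,q_0,1},L_\infty\big)$. Then I would apply the previous corollary with the particular choice $a_0\equiv 1$ (which is certainly in $SV$), noting that the hypothesis on $\b_0$, namely $\|u^{-1/r_0}\b_0(u)\|_{r_0,(0,1)}<\infty$, is exactly the one assumed here, and that the auxiliary functions $\chi$ and $\rho$ defined in the statement agree with those in the previous corollary under this specialization. Matching the three cases of the hypothesis on $a$ ($\|u^{-1/r}a(u)\|_{r,(1,\infty)}<\infty$ for $\theta=0$; $0<\theta<1$; $\|u^{-1/r}a(u)\|_{r,(0,1)}<\infty$ for $\theta=1$) then yields the three displayed identities, with $1/p=(1-\theta)/p_0$ and $a^{\#}=\chi^{1-\theta}\,a\circ\rho$ in part (ii).

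There is essentially no obstacle here; the only point requiring a word of care is that the $\mathcal{R}\mathcal{L}$ Lorentz--Karamata space appearing in part (i) of the previous corollary, namely $L^{\mathcal R,\mathcal L}_{p_0,(r,a\circ\rho,r_0,\b_0,q_0,a_0)}$, must be read with $a_0\equiv 1$, giving $L^{\mathcal R,\mathcal L}_{p_0,(r,a\circ\rho,r_0,\b_0,q_0,1)}$ exactly as written here. Thus the proof amounts to: \emph{apply the previous corollary with $a_0\equiv 1$ and use Remark~\ref{remark21}.}
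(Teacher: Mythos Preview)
Your proposal is correct and matches the paper's own approach exactly: the paper simply states ``In view of Remark~\ref{remark21}, we finish with the following statement'' and gives no separate proof, so Corollary~\ref{cor72} is indeed obtained from the preceding corollary by taking $a_0\equiv 1$ and invoking Remark~\ref{remark21}.
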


{\small \hspace{-5mm}{\textbf{Acknowledgments.}}
The authors Pedro Fern\'andez-Mart\'inez and Teresa Signes have been partially supported by grant MTM2017-84058-P (AEI/FEDER, UE).}

\end{document}